\setlist[enumerate,1]{label=\textup{(\arabic*)}}
\renewcommand*{\PrintDOI}[1]{\href{http://dx.doi.org/\detokenize{#1}}{doi: \detokenize{#1}}}
\newcommand{\comment}[1]{}  
\theoremstyle{plain}
\newtheorem{theorem}{Theorem}[section]
\newtheorem{lemma}[theorem]{Lemma}
\newtheorem{corollary}[theorem]{Corollary}
\newtheorem{proposition}[theorem]{Proposition}
\theoremstyle{remark}
\newtheorem{remark}[theorem]{Remark}
\theoremstyle{definition}
\newtheorem{definition}[theorem]{Definition}
\newtheorem{example}[theorem]{Example}
\numberwithin{theorem}{section}
\newcommand\C{\mathbb C}
\newcommand\N{\mathbb N}
\newcommand\Q{\mathbb Q}
\newcommand\Z{\mathbb Z}
\newcommand{\coma}{\widehat}
\newcommand{\comb}{\overbracket[.7pt][1.4pt]}
\newcommand*{\tens}{\mathsf{T}}
\newcommand*{\jens}{\mathsf{J}}
\newcommand*{\tans}{\mathcal{T}}
\newcommand{\updagger}{\textup{\tiny\!\!\dagger}}
\newcommand{\Cont}{\mathrm{C}}
\newcommand{\cont}{\mathrm{cont}}
\newcommand{\defeq}{\mathrel{:=}} 
\newtheorem*{theorem*}{Theorem}
\newcommand*{\into}{\rightarrowtail}
\newcommand*{\onto}{\twoheadrightarrow}
\newcommand*{\ling}[1]{#1_\mathrm{lg}}
\DeclarePairedDelimiter{\abs}{\lvert}{\rvert}
\DeclarePairedDelimiter{\norm}{\lVert}{\rVert}
\DeclarePairedDelimiter{\gen}{\langle}{\rangle}
\DeclarePairedDelimiter{\floor}{\lfloor}{\rfloor}
\DeclarePairedDelimiterX{\setgiven}[2]{\{}{\}}{#1\,{:}\,\mathopen{}#2}
\newcommand\hot{\mathbin{\comb{\otimes}}}
\newcommand\haotimes{\mathbin{\coma{\otimes}}}
\DeclareMathOperator{\coker}{coker}
\DeclareMathOperator{\HOM}{HOM}
\DeclareMathOperator{\HA}{HA}
\newcommand*{\an}{\mathrm{an}}
\newcommand{\HAC}{\mathbb{HA}}
\newcommand{\op}{\mathrm{op}}
\newcommand{\ev}{\mathrm{ev}}
\newcommand{\odd}{\mathrm{odd}}
\newcommand{\nb}{\nobreakdash}
\newcommand{\dvr}{V}
\newcommand{\dvgen}{\pi}
\newcommand{\dvf}{F}
\newcommand{\resf}{\mathbb F}
\newcommand{\Alg}{\mathsf{Alg}}
\newcommand{\tf}{\mathrm{tf}}
\DeclareMathOperator{\Hom}{Hom}
\DeclareMathOperator{\HP}{HP}
\newcommand{\kk}{\mathsf{kk}^{\mathrm{an}}}
\begin{document}
\title{Nonarchimedean bivariant \(K\)-theory}

\author{Devarshi Mukherjee}
\email{dmukherjee@dm.uba.ar}

\address{Dep. Matemática-IMAS\\
 FCEyN-UBA, Ciudad Universitaria Pab 1\\
 1428 Buenos Aires\\ 
Argentina}

\begin{abstract}
We introduce bivariant \(K\)-theory for nonarchimedean bornological algebras over a complete discrete valuation ring \(\dvr\). This is the universal target for dagger homotopy invariant, matricially stable and excisive functors, similar to bivariant \(K\)-theory for locally convex topological \(\C\)-algebras and algebraic bivariant \(K\)-theory. As in the archimedean case, we use the universal property to construct a bivariant Chern character into analytic and periodic cyclic homology. When the first variable is the ground algebra \(\dvr\), we get a version of Weibel's homotopy algebraic \(K\)-theory, which we call \textit{stabilised overconvergent analytic \(K\)-theory}. The resulting analytic \(K\)-theory satisfies dagger homotopy invariance, stability by completed matrix algebras, and excision.  
\end{abstract}

\thanks{The author thanks Guillermo Corti\~nas, Guido Arnone, George Tamme, Ralf Meyer for helpful discussions, and the anonymous referee for useful comments. The author was funded by a Feodor Lynen Fellowship of the Alexander von Humboldt Foundation.}

\maketitle


\section{Introduction}

Throughout this article, let \(\dvr\) be a complete discrete valuation ring with uniformiser \(\dvgen\), fraction field \(\dvf\), and residue field \(\resf\). We assume throughout that~\(\dvf\) has
characteristic zero.

Bivariant \(K\)-theory was introduced by Kasparov (\cites{Kasparov:Invariants_elliptic,Kasparov:K-functors}) as a unification of complex topological \(K\)-theory and \(K\)-homology, with a view towards the Novikov conjecture. It has since been used in the classification of \(C^*\)-algebras, the Baum-Connes conjecture and in differential topology (\cites{Kasparov:Operator_K_applications,Kasparov:Novikov}). There are several equivalent ways of defining bivariant \(K\)-theory: in the form introduced by Kasparov, it is a category \(KK\) whose objects are separable \(C^*\)-algebras, and whose morphisms \(KK(A,B)\) are Hilbert \(B\)-modules with certain extra structure. The viewpoint that will be most relevant in this article is the approach due to Cuntz, which exhibits the morphism space as a noncommutative analogue of the stable homotopy category (\cites{cuntz1987new}). 

In analogy with the category of noncommutative motives (see \cite{tabuada2011guided}), bivariant \(K\)-theory is the universal target for functors on the category of separable \(C^*\)-algebras that are homotopy invariant, stable by compact operators and excisive for extensions with completely positive sections. Typical examples of such functors are asymptotic, local and analytic cyclic homology due to Michael Puschnigg (\cite{puschnigg2006asymptotic}) and Ralf Meyer (\cite{Meyer:HLHA}).  The source category of bivariant \(K\)-theory has since been enlarged to treat more general topological algebras, such as the Frechet algebra of smooth functions on a manifold, and the Weyl algebra with the fine topology (see \cites{Cuntz:Weyl,Cuntz-Thom:Algebraic_K}). This is done using \textit{classifying maps} of extensions of appropriate topological algebras with continuous linear sections. The most general class of algebras in which bivariant \(K\)-theory has been studied is the  category of complete bornological \(\C\)-algebras. Bivariant \(K\)-theory in this generality is discussed in (\cite{Cuntz-Meyer-Rosenberg}). Away from the topological setting, a purely algebraic version of bivariant \(K\)-theory is developed in (\cite{Cortinas-Thom:Bivariant_K}). Together with its equivariant (see \cites{MR3123759,MR4018774, cortinas2022, arnone2022graded}), graded and Hermitian versions, these algebraic bivariant \(K\)-theories have led to important results in the classification theory of Leavitt path algebras (\cites{cortinas2020homotopy,cortinas2022}).

The analytic \(K\)-theoretic invariants we propose use a combination of the tools developed in the operator algebraic bivariant \(K\)-theories and the purely algebraic version. This is justified by the fact that the topological algebras that arise in nonarchimedean geometry are completions or \textit{analytifications} of ordinary \(\dvr\)-algebras (see \cite{ben2022analytification}), which necessitates us to work in a general enough framework that allows for the passage between (homological) algebra and functional analysis. As in the archimedean case, the right source category to develop such theories is the category of complete, torsionfree bornological \(\dvr\)-algebras. Functional analysis in this context is developed in (\cites{Cortinas-Cuntz-Meyer-Tamme:Nonarchimedean, Meyer-Mukherjee:Bornological_tf}). 

This article follows a series of papers (\cites{Cortinas-Meyer-Mukherjee:NAHA, Meyer-Mukherjee:HA, Meyer-Mukherjee:HL}) that develop variants of periodic cyclic homology that have reasonable formal properties for nonarchimedean topological algebras. More concretely, the analytic cyclic homology complex is a functor
\[\HAC \colon \{\text{ Complete, torsionfree bornological } \dvr-\text{algebras}\} \to  \overleftarrow{\mathsf{Der}(\mathsf{Ind}(\mathsf{Ban}_\dvf))}\] into the homotopy category of pro-ind-systems of complexes of Banach \(\dvf\)-vector spaces. It satisfies homotopy invariance with respect to the algebra \(\dvr[t]^\updagger\) of overconvergent power series, stability with respect to suitably complete matrix algebras and excision for semi-split extensions of complete, torsionfree bornological \(\dvr\)-algebras. One of the main motivations of this article is to find the universal functor \[j \colon \{\text{ Complete, torsionfree bornological } \dvr-\text{algebras}\} \to \kk\] satisfying these properties. The existence and universality of such a functor means that \(\kk\)-equivalences automatically yield \(\mathbb{HP} (- \otimes \dvf)\) and \(\HAC\)-equivalences. This is important as although analytic cyclic homology only depends on its reduction mod \(\dvgen\) when restricted to a suitable subcategory, we cannot merely work in the bivariant algebraic \(kk\)-category relative to the residue field: \(kk\)-equivalences between \(\resf\)-algebras only yield \(\HAC\)-equivalences, and we do not yet know in what generality analytic and periodic cyclic homology agree. On the other hand, the more fundamental theory is of course periodic cyclic homology, and we use its computation for smooth, affinoid dagger algebras to construct Chern characters from homotopy algebraic \(K\)-theory to rigid cohomology. This is the analogue of the Chern character taking values in de Rham cohomology for manifolds. 

Finally, in future projects we also aim to study the Davis-L\"uck assembly map (\cite{davis1998spaces}) in the nonarchimedean analytic setting. In the purely algebraic case, recent work (\cite{ellis2022algebraic}) shows that the left hand side of the assembly map is a certain colimit of equivariant algebraic \(kk\)-groups. The assembly map is then a relationship between a variant of topological \(K\)-theory of \emph{completed} group algebras or crossed product algebras of discrete group actions, and equivariant bivariant analytic \(K\)-theory.

The article is organised as follows.

In Section \ref{sec:background}, we recall relevant background material on bornological functional analysis and topological \(K\)-theory in the nonarchimedean setting. Section \ref{sec:analytic-homotopy} introduces the \textit{overconvergent rigid \(n\)-simplex}, relative to which we define homotopies. This is defined as the simplicial ring \[[n] \mapsto \dvr\gen{\Delta^n}^\updagger \defeq \dvr[x_1,\dotsc,x_n]^\updagger/\gen{\sum x_ i - 1},\] where \(\dvr[x_1,\dotsc, x_n]^\updagger\) is the Monsky-Washnitzer algebra. We then describe the matrix algebras we seek stability results for. These include the \(\dvgen\)-adic completion \(\mathcal{M}^{\cont}\) of \(\mathbb{M}_\infty\), which is our main focus. 

In Section \ref{sec:definition-kk}, we define analytic \(kk\)-theory. The objects of this category are complete, torsionfree bornological \(\dvr\)-algebras, and its morphisms are \[\kk(A,B) = \varinjlim [\jens^n A, \mathcal{M}_{\infty}^\cont(B)^{\mathcal{S}^n}],\] where \(\jens\) denotes the noncommutative loops coming from the universal \textit{tensor algebra extension}. The bounded algebra homomorphisms in the inductive limit are induced by the classifying maps of the universal extension. As in the topological and algebraic setting, the definition is constructed in a manner that we have homotopy invariance, \(\mathcal{M}^\cont\)-stablility and excision for semi-split extensions of complete, torsionfree bornological algebras. Section \ref{sec:triangulated} shows that \(\kk\) is a triangulated category, where the distinguished triangles are isomorphic to diagrams of the form \[ \Omega(B) \to P_f \to A \to B,\] where \(P_f\) is the path algebra relative to a bounded algebra homomorphism \(f \colon A \to B\), and \(\Omega(B)\) is the \textit{loop functor} applied to \(B\).

In Section \ref{sec:analytic-K}, we study the relationship between our bivariant \(K\)-theory and various constructions defined previously. These include the \(KV\)-theories studied by Calvo and Hamida (\cites{hamida, calvo}), and the overconvergent version due to Tamme (\cite{tamme:thesis}). The definition of the \(KV\)-spectrum is arrived at by topologising the algebraic \(KV\)-spectrum. More concretely, for a Banach \(\dvr\)-algebra (resp. affinoid dagger algebra) \(A\), the \textit{topological (resp. analytic) \(KV\)-theory spectrum} is defined as  \[\mathsf{KV}^{\mathrm{top}}(A) \defeq \mathsf{BGL}^+(A \gen{\Delta^\bullet}) \text{ resp. } \mathsf{KV}^\an(A) \defeq \mathsf{BGL}^+(A\gen{\Delta^\bullet}^\updagger).\] The topological and analytic \(K\)-theory spectra coincide with the spectrum \(\mathsf{KV}(A/\dvgen A)\) associated to the reduction mod \(\dvgen\). We extend these definitions to nonconnective spectra using the Banach algebraic suspension \(\coma{\Sigma} \defeq \coma{\Gamma}/ \mathcal{M}^{\cont}\). The resulting theory, which we call \textit{overconvergent stabilised analytic \(K\)-theory} \(\tilde{K}^{\an, \updagger}(A) = K^{\an, \updagger}(\mathcal{M}^{\cont}(A))\) is the \(\mathcal{M}^{\cont}\)-stablisation of a version of Weibel's homotopy algebraic \(K\)-theory, which is the functor on the right hand side. The functor \(\tilde{K}^{\an, \updagger}\) defined on the category of complete, torsionfree bornological \(\dvr\)-algebras is dagger homotopy invariant, excisive and satisfies \(\mathcal{M}^{\cont}\)-stability by construction. The universal property of \(\kk\) yields a natural map \(\kk_n(\dvr, A) \to \tilde{K}_n^{\an, \updagger}(A)\), which we show is an isomorphism for each \(n \in \Z\) in Theorem \ref{thm:kk=KH}. Since the overconvergent analytic \(K\)-theory groups are an inductive limit of \(KV^{\an}\)-groups, they depend only on the reduction mod \(\dvgen\) of the algebra. In particular, since Weibel's homotopy algebraic \(K\)-theory satisfies \(\mathbb{M}_\infty\)-stability, we have \[\tilde{K}^{\an, \updagger}(A^\updagger) \cong K^{\an, \updagger}(A^\updagger) \cong KH(A/\dvgen A),\] whenever \(A^\updagger \subseteq \coma{A}\).

Finally, since bivariant analytic cyclic homology satisfies dagger homotopy invariance, excision and \(\mathcal{M}^{\cont}\)-stability, the universal property of \(\kk\) yields bivariant Chern characters 
\[
\kk_n(A,B) \to \HA_n(A,B),
\] for each \(n\). These specialise for \(A = \dvr\) to 
\[\tilde{K}^{\an,\updagger}_n(B) \overset{\mathrm{ch}_n}\to \HA_n(B),\] for each \(n \in \Z\). Since periodic cyclic homology also satisfies these properties, we also get bivariant Chern characters \(\kk_n(A,B) \to \HP_n(A \otimes \dvf,B \otimes \dvf)\), which specialise when \(A = \dvr\) to group homomorphisms \[\tilde{K}^{\an,\updagger}_n(B) \overset{\mathrm{ch}_n}\to \HP_n(B \otimes \dvf),\] for \(n \in \Z\). When \(B\) is the dagger completion of a smooth, finite-type \(\dvr\)-algebra, then we get Chern characters \(KH_n(B/\dvgen B) \to \HP_n(B \otimes \dvf) \cong \bigoplus_{j \in \Z} H_{\mathrm{rig}}^{n+2j}(B/\dvgen B, \dvf)\). This is analogous to the \textit{\(p\)-adic Chern character} from the \(p\)-completed, rationalised algebraic \(K\)-theory spectrum to the \(p\)-completed, rationalised periodic cyclic homology spectrum
\[ K(A/\dvgen A, \Q_p) \to \HP(A, \Q_p)\]  constructed in \cite{antieau2020beilinson}*{Definition 2.14}. 


\section{Background}\label{sec:background}

\subsection{Preliminaries from bornological analysis}

As in~\cites{Cortinas-Cuntz-Meyer-Tamme:Nonarchimedean,
  Meyer-Mukherjee:Bornological_tf,
  Cortinas-Meyer-Mukherjee:NAHA,Meyer-Mukherjee:HA}, we use the
framework of bornologies to do nonarchimedean analysis.
A \emph{bornology} on a set~\(X\) is a collection of its subsets,
which are called \emph{bounded subsets}, such
that finite subsets are bounded and finite unions and subsets of
bounded subsets remain bounded.

A \emph{bornological \(\dvr\)\nb-module} is a
\(\dvr\)\nb-module~\(M\) with a bornology such that every bounded
subset is contained in a bounded \(\dvr\)\nb-submodule.  We call a
\(\dvr\)\nb-module map \(f \colon M \to N\) \emph{bounded} if it
maps bounded subsets of~\(M\) to bounded subsets of~\(N\).  A
bornological \(\dvr\)\nb-algebra is a bornological
\(\dvr\)\nb-module with a bounded multiplication map.  A
\emph{complete} bornological \(\dvr\)\nb-module is a bornological
\(\dvr\)\nb-module in which every bounded subset is contained in a
bounded, \(\dvgen\)\nb-adically complete \(\dvr\)\nb-submodule.
Every bornological \(\dvr\)\nb-module~$M$ has a
completion~$\comb{M}$ (see
\cite{Cortinas-Cuntz-Meyer-Tamme:Nonarchimedean}*{Proposition~2.14}).

\begin{example}
  The most basic example of a bornology on a \(\dvr\)\nb-module is
  the \emph{fine bornology}, which consists of those subsets that
  are contained in a finitely generated \(\dvr\)\nb-submodule.  Any
  fine bornological \(\dvr\)\nb-module is complete.  By default, we
  equip modules over the residue field~\(\resf\) with the fine
  bornology.
\end{example}

\begin{definition}[\cite{Meyer-Mukherjee:Bornological_tf}*{Definition~4.1}]
  \label{def:bornologically_tf}
  We call a bornological \(\dvr\)\nb-module~\(M\) (bornologically)
  \emph{torsionfree} if multiplication by~\(\dvgen\) is a
  bornological embedding, that is, $M$ is algebraically torsionfree
  and
  \(\dvgen^{-1} \cdot S \defeq \setgiven{x \in M}{\dvgen x \in S}\)
  is bounded for every bounded subset \(S \subseteq M\).  A
  \(\dvr\)\nb-module with the fine bornology is bornologically
  torsionfree if and only if it is torsionfree in the purely
  algebraic sense.  For the rest of this article, we briefly write
  ``torsionfree'' instead of ``bornologically torsionfree''.
\end{definition}

\begin{lemma}\label{lem:complete-category}
The category of complete, bornologically torsionfree \(\dvr\)-modules is complete. 
\end{lemma}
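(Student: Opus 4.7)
The plan is to show that the full subcategory of complete, torsionfree bornological \(\dvr\)-modules is closed under products and equalizers inside the category of all bornological \(\dvr\)-modules; since every small limit is built from these, this suffices.

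For products, I would equip \(\prod_i M_i\) with the product bornology, where a subset is bounded iff it is contained in \(\prod_i S_i\) for some bounded \(S_i \subseteq M_i\). Torsionfreeness passes to the product: if \(\dvgen \cdot T \subseteq \prod_i S_i\), then componentwise \(p_i(T) \subseteq \dvgen^{-1} S_i\), which is bounded in \(M_i\) by torsionfreeness, so \(T\) is bounded. For completeness, given a bounded \(S \subseteq \prod M_i\) contained in \(\prod S_i\), choose bounded \(\dvgen\)-adically complete submodules \(T_i \supseteq S_i\); then \(\prod T_i\) is a bounded submodule containing \(S\), and the identifications \(\dvgen^n \prod T_i = \prod \dvgen^n T_i\) together with the commutation of products and inverse limits yield that \(\prod T_i\) is itself \(\dvgen\)-adically complete.

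For equalizers of bounded maps \(f, g \colon M \to N\), I would take the underlying module \(E \defeq \ker(f-g)\) with the subspace bornology; torsionfreeness is inherited from \(M\) since \(E \hookrightarrow M\) is a bornological embedding. For completeness, given a bounded \(S \subseteq E\), choose a bounded \(\dvgen\)-adically complete \(T \subseteq M\) with \(S \subseteq T\) and a bounded \(\dvgen\)-adically complete \(T' \subseteq N\) containing \((f-g)(T)\). Setting \(h \defeq (f-g)|_T \colon T \to T'\), the intersection \(T \cap E = \ker h\) is a bounded submodule of \(E\) containing \(S\), and the task reduces to showing \(\ker h\) is \(\dvgen\)-adically complete as a \(\dvr\)-module.

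The hard step will be this last claim, since \emph{a priori} the \(\dvgen\)-adic topology on \(\ker h\) induced from \(T\) may be strictly coarser than its intrinsic \(\dvgen\)-adic topology. The resolution rests on torsionfreeness of the target: since \(T/\ker h\) embeds into \(T'\) via \(h\), the quotient is torsionfree, which forces \(\dvgen^n \ker h = \ker h \cap \dvgen^n T\) for every \(n\). The two topologies on \(\ker h\) therefore agree, and as \(\ker h\) is closed in the \(\dvgen\)-adically complete module \(T\), it is itself \(\dvgen\)-adically complete, completing the argument.
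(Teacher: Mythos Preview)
Your argument is correct and complete; the only step that merits a word of justification is that \(\ker h\) is closed in \(T\), which follows because any \(\dvr\)-linear map is \(\dvgen\)-adically continuous and \(T'\), being torsionfree, is \(\dvgen\)-adically separated.

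Your route differs from the paper's in two ways. First, for products the paper deduces torsionfreeness abstractly: each \(M_i\) embeds bornologically into \(M_i \otimes \dvf\) by \cite{Meyer-Mukherjee:Bornological_tf}*{Proposition~4.3}, these embeddings are kernels, and kernels commute with products, yielding \(\prod M_i \hookrightarrow \prod (M_i \otimes \dvf)\). You instead verify the defining property of bornological torsionfreeness directly, which is shorter and more transparent here. Second, for kernels the paper invokes the general fact (\cite{Meyer-Mukherjee:Bornological_tf}*{Theorem~2.3}) that closed bornological submodules of complete modules are complete; you bypass this by working inside a single bounded \(\dvgen\)-adically complete \(T\) and using the torsionfreeness of the target to match the two \(\dvgen\)-adic topologies on \(\ker h\). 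Your approach is more self-contained and makes the role of torsionfreeness of \(N\) explicit, whereas the paper's cited theorem does not need that hypothesis and applies to arbitrary bounded maps between complete bornological modules. Either argument suffices for the lemma as stated.
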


\begin{proof}
It only needs to be checked that this category is closed under kernels and products. Given a map \(f \colon M \to N\) of complete bornological \(\dvr\)-modules, its kernel is a closed and hence a complete bornological \(\dvr\)-submodule of \(M\) by \cite{Meyer-Mukherjee:Bornological_tf}*{Theorem 2.3}. For products, consider a family \((M_i)_{i \in I}\) of complete, bornological \(\dvr\)-modules. Then the product bornology on \(\prod_{i \in I} M_i\) turns it into a complete bornological \(\dvr\)-module. The kernel of a map between bornologically torsionfree \(\dvr\)-modules is a submodule with the subspace bornology, and is hence bornologically torsionfree by \cite{Meyer-Mukherjee:Bornological_tf}*{Lemma 4.2}. Finally, if \((M_i)_{i \in I}\) is a family of bornologically torsionfree \(\dvr\)-modules, then there are bornological embeddings \(M_i \subseteq M_i \otimes \dvf\) for each \(i\) by \cite{Meyer-Mukherjee:Bornological_tf}*{Proposition 4.3}. These are kernels in the category of bornological \(\dvr\)-modules, which therefore commute with products. So there is a bornological embedding \(\prod_{i \in I} M_i \subseteq \prod_{i \in I} M_i \otimes \dvf\), from which we conclude that \(\prod_{i \in I} M_i\) is bornologically torsionfree.
\end{proof}

We will use the following on several occasions in the paper:

\begin{lemma}\cite{Meyer-Mukherjee:HA}*{Lemma 2.7}
  \label{lem:tensor-exact}
  Let \(f \colon M \to N\) be a bornological embedding between
  complete, bornologically torsionfree \(\dvr\)-modules.  Then for
  an arbitrary complete, torsionfree bornological
  \(\dvr\)\nb-module~\(D\), the induced map
  \(f \otimes 1_D \colon M \hot D \to N \hot D\) is a bornological
  embedding.
\end{lemma}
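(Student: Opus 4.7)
The plan is to proceed in three stages: establish injectivity on the algebraic tensor product, dissect the completed tensor product into tensor products of \(\dvgen\)-adically complete lattices, and exploit torsionfreeness to push the embedding property through the completion.

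For injectivity, I would use that \(M\), \(N\), \(D\) bornologically embed into their \(\dvf\)-rationalisations by \cite{Meyer-Mukherjee:Bornological_tf}*{Proposition~4.3}. Since tensor products over the field \(\dvf\) are exact, the rationalised map is injective, so \(f \otimes 1_D \colon M \otimes D \to N \otimes D\) is injective on the underlying algebraic tensor product. Injectivity then survives passage to the completion because \(\dvgen\)-adic completion is exact on torsionfree \(\dvr\)-modules.

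The main step is checking that the subspace bornology on \(M \hot D\) inherited from \(N \hot D\) agrees with its intrinsic bornology. Given a subset \(S \subseteq M \hot D\) whose image is bounded in \(N \hot D\), the definition of the completed bornological tensor product lets me absorb this image into a disk of the form \(T_0 \hot U_0\) for bounded, \(\dvgen\)-adically complete, torsionfree disks \(T_0 \subseteq N\) and \(U_0 \subseteq D\). The embedding property of \(f\) yields a bounded disk \(S_0 \subseteq M\) with \(f(S_0) \subseteq T_0\); up to mild enlargement I may take \(S_0\) to be \(\dvgen\)-adically complete. The remaining task is to show that the induced map \(S_0 \hot U_0 \to T_0 \hot U_0\) of \(\dvgen\)-adically complete \(\dvr\)-modules is an embedding. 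I would establish this by rationalising — the corresponding map of \(\dvf\)-Banach modules is an embedding by standard flatness over the field \(\dvf\) — and then descending to the torsionfree \(\dvr\)-lattices using the exactness of \(\dvgen\)-adic completion on torsionfree modules.

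The main obstacle is precisely this last descent: bornological embeddings are not preserved by \(\dvgen\)-adic completion in general, and the argument needs the torsionfreeness hypothesis at every stage to convert the bornological embedding \(f\) into a tractable statement about tensor products of torsionfree lattices. A secondary subtlety is that the bornology on \(M \hot D\) is generated, so the argument must allow for enlarging the model disks \(S_0\), \(T_0\), \(U_0\) at each step.
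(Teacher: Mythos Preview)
Your proposal takes a different route from the paper and, as written, has a genuine gap.

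The main issue is in your second step. Suppose \(S \subseteq M \hot D\) has image contained in \(T_0 \haotimes U_0\), and set \(S_0 = f^{-1}(T_0)\). Even if you succeed in proving that \(S_0 \haotimes U_0 \to T_0 \haotimes U_0\) is an embedding of \(\dvgen\)-adically complete modules, you have not explained why \(S\) itself lies in \(S_0 \haotimes U_0\). A priori \(S\) sits in some other lattice \(S_0'' \haotimes U_0''\) inside \(M \hot D\), and knowing only that \((f \otimes 1_D)(S) \subseteq T_0 \haotimes U_0\) does not force \(S \subseteq S_0 \haotimes U_0\); that containment is essentially the bornological embedding statement you are trying to prove. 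A secondary issue is your injectivity step: \(M \hot D\) is not the \(\dvgen\)-adic completion of \(M \otimes D\) but a filtered colimit of \(\dvgen\)-adic completions of bounded lattices, so the slogan ``\(\dvgen\)-adic completion is exact on torsionfree modules'' does not by itself give injectivity of \(M \hot D \to N \hot D\).

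The paper sidesteps both problems by invoking a structure theorem (\cite{Cortinas-Meyer-Mukherjee:NAHA}*{Corollary~2.4.3}): any complete, torsionfree bornological \(\dvr\)-module \(D\) is a filtered colimit \(\varinjlim_i \Cont_0(X_i,\dvr)\) with bounded, injective structure maps. Since \(\Cont_0(X_i,\dvr) \haotimes M \cong \Cont_0(X_i,M)\), the map at each stage becomes \(\Cont_0(X_i,M) \to \Cont_0(X_i,N)\), which is visibly a bornological embedding (apply \(f\) pointwise). Embeddings are preserved under filtered colimits with injective structure maps, and \(\hot\) commutes with such colimits, so the result follows. This decomposition of \(D\) is the missing ingredient; your disk-by-disk argument would need it, or an equivalent device, to close the gap.
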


\begin{proof}
  Since~\(D\) is complete and bornologically torsionfree, we may
  write it as an inductive limit
  \(D \cong \varinjlim_i \Cont_0(X_i,\dvr)\) of unit balls of Banach
  \(\dvf\)\nb-vector spaces, with bounded, injective structure maps.
  This is \cite{Cortinas-Meyer-Mukherjee:NAHA}*{Corollary~2.4.3}.
  For each~\(i\), the embedding~\(f\) induces a bornological
  embedding from
  \(\Cont_0(X_i,\dvr) \haotimes M \cong \Cont_0(X_i,M)\) to
  \(\Cont_0(X_i, \dvr) \haotimes N \cong \Cont_0(X_i,N)\) by Lemma
  \ref{lem:tensor-exact}.  The
  embedding property is preserved by taking inductive limits.  Since
  the structure maps of the inductive systems \((\Cont_0(X_i,M))_i\)
  and \((\Cont_0(X_i,N))_i\) are injective, the inductive limit is
  already separated, so that no separated quotient occurs.  Since
  the completed projective tensor product commutes with filtered
  colimits, we obtain the desired bornological embedding from
  \(D \hot M \cong \varinjlim_i \Cont_0(X_i, M)\) into
  \(\varinjlim_i \Cont_0(X_i,N) \cong D \hot N\).
\end{proof}

\begin{definition}[\cites{Cortinas-Cuntz-Meyer-Tamme:Nonarchimedean,
    Meyer-Mukherjee:Bornological_tf}]
  \label{def:dagger_algebra}
  We call a bornological \(\dvr\)\nb-algebra~\(D\)
  \emph{semidagger} if, for every bounded subset \(S \subseteq D\),
  the \(\dvr\)\nb-submodule \(\sum_{i=0}^\infty \dvgen^i S^{i+1}\)
  is bounded in~\(D\).  A complete, torsionfree, semidagger
  bornological \(\dvr\)\nb-algebra is called a \emph{dagger
    algebra}.
\end{definition}

\begin{example}
  \label{exa:resf_semidagger}
  Any \(\resf\)\nb-algebra \(A\) with the fine bornology is semidagger and
  complete. That it is semi-dagger follows from the fact that since \(A\) is viewed as a \(\dvr\)-module via the quotient map \(\dvr \to \resf\), for any finitely generated submodule \(S \subseteq A\), we have \(S^{\diamond} = \sum_{n=0}^\infty \dvgen^n S^{n+1} = S\). 
\end{example}

\begin{example}
  \label{exa:Banach_algebra}
  Let~\(B\) be a Banach \(\dvf\)\nb-algebra.  We assume the norm
  of~\(B\) to be submultiplicative.  Let \(D\subseteq B\) be the
  unit ball.  Then \(D\cdot D\subseteq D\), and~\(D\) becomes a
  \(\dvgen\)\nb-adically complete, torsionfree \(\dvr\)\nb-algebra.
  Conversely, if such an algebra~\(D\) is given, then
  \(D\hookrightarrow D\otimes \dvf\) and there is a unique norm on
  \(D\otimes \dvf\) with unit ball~\(D\).

  Let~\(D\) be the unit
  ball of a Banach \(\dvf\)\nb-algebra as above.  Then we call~\(D\)
  with the bornology where all subsets are bounded a \emph{Banach
    \(\dvr\)\nb-algebra}.  This bornology makes~\(D\) a dagger
  algebra.
\end{example}

\begin{definition}(\cite{Cortinas-Cuntz-Meyer-Tamme:Nonarchimedean})
  Any bornology on a \(\dvr\)\nb-algebra~\(D\) is contained in a
  smallest semidagger bornology, namely, the bornology generated by the
  \(\dvr\)\nb-submodules of the form
  \(\sum_{i=0}^\infty \dvgen^i S^{i+1}\), where \(S \subseteq D\) is
  bounded in the original bornology.  This is called the
  \emph{linear growth bornology}.  We denote~\(D\) with the linear
  growth bornology by~\(\ling{D}\).
\end{definition}

If~\(D\) is torsionfree, then the completion
\(D^\dagger \defeq \comb{\ling{D}}\) is a dagger algebra (see
\cite{Meyer-Mukherjee:Bornological_tf}*{Proposition~3.8} or, in
slightly different notation,
\cite{Cortinas-Cuntz-Meyer-Tamme:Nonarchimedean}*{Lemma 3.1.12}).

\begin{definition}\label{def:fine-mod-p}(\cites{Meyer-Mukherjee:HA, Meyer-Mukherjee:HL})
  A bornological \(\dvr\)\nb-module~\(M\) is called \emph{fine
    mod~\(\dvgen\)} if the quotient bornology on \(M/\dvgen M\) is
  the fine one.  Equivalently, any bounded subset is contained in
  \(\mathcal{F}+\dvgen M\) for a finitely generated \(\dvr\)\nb-submodule
  \(\mathcal{F}\subseteq M\).
\end{definition}

\begin{example}
Any nuclear bornological \(\dvr\)-algebra (\cite{Meyer-Mukherjee:HL}*{Definition 3.1}) is fine mod \(\dvgen\). Examples of such algebras include torsionfree \(\dvr\)-algebras with the fine bornology, and any torsionfree \(\dvr\)-module with the bornology where a subset \(S\) is bounded if it is contained in a bounded \(\dvr\)-module \(T\) and there is a null sequence \((t_n) \in T\) such that  \(S = \setgiven{s = \sum_{n=0}^\infty c_n t_n}{(c_n) \in l^\infty(\N,\dvr), s \text{ converges in } T}\). 
\end{example}

\subsection{Topological \(K\)-theories in the nonarchimedean context}\label{subsec:existing-theories}

In this subsection, we recall some already existing constructions of topological \(K\)-theory in the context of nonarchimedean Banach algebras, due to Calvo and Hamida (\cites{hamida, calvo}). These are defined by modifying the interval objects of homotopy invariant versions of algebraic \(K\)-theory, namely \(KV\) (\cite{kv}) and \(KH\)-theory (\cite{kh}), taking into account the topology on the algebra. For nonarchimedean Banach algebras, a natural choice of interval object is the algebra of power series 
\[
\dvr\gen{x_1,\dotsc,x_n} = \setgiven{\sum_{I \subseteq \N^n} c_I x^I}{\lim_I \abs{c_I} = 0}
\] convergent on the unit polydisc. Equipped with the Gauss norm \(\abs{\sum c_I x^I} \defeq \max_I \abs{c_I}\), this is a Banach \(\dvr\)-algebra. One then  defines a simplicial ring 
\[ \dvr\gen{\Delta^\bullet} \defeq [n] \mapsto \dvr\gen{x_0,\dotsc, x_n}/ \gen{\sum_{i=0}^n x_i - 1},\] where the \(0\)-th term is just \(\dvr\). Now for any Banach \(\dvr\)-algebra, we can form the simplicial ring \(A\gen{\Delta^\bullet} \defeq A \hot \dvr\gen{\Delta^\bullet}\), where \(\hot\) denotes the completed, projective tensor product in the category of Banach \(\dvr\)-modules. Using this, they define the \textit{topological \(K\)-theory} of a unital Banach algebra \(A\) as the spectrum
\[\mathsf{K}^{\mathrm{top}}(A) \defeq \mathsf{K}(A \gen{\Delta^\bullet}),\] and its homotopy groups \(K_n^{\mathrm{top}}(A)\) for \(n\geq 1\) as the topological \(K\)-theory groups of \(A\). The extension to non-unital algebras, as before, involves taking the homotopy fibre \(\mathsf{K}^{\mathrm{top}}(A) \defeq \mathsf{fib}(\mathsf{K}^{\mathrm{top}}(\tilde{A}) \to \mathsf{K}^{\mathrm{top}}(\dvr))\) of the unitalisation \(\tilde{A} = A \oplus \dvr\) with the product topology.

Hamida's topological \(K\)-theory is homotopy invariant with respect to the closed unit disc \(\mathbb{A}^{1,\an}(1) = \mathrm{Sp}(\dvr\gen{x})\) with a \emph{fixed} radius (say radius \(1\)). The recent work of Kerz-Saito-Tamme \cite{MR4012551} develops a theory for Banach algebras over the fraction field \(\dvf\) that is homotopy invariant with respect to discs of all radii simultaenously. That is, their interval object is \(\mathbb{A}^{1,\an} = \mathrm{colim}_r \mathbb{A}^{1,\an}(r)\), where \(\mathrm{colim}_r \mathbb{A}^{1,\an}(r) = \mathrm{Sp}(\dvf\gen{x}_r)\) for \(\dvf \gen{x}_r = \setgiven{\sum_{n \in \N}c_n x^n}{\lim \abs{c_n}r^n = 0}\). For each such radius \(r>0\), they define the simplicial algebra \(A\gen{\Delta^\bullet}_r = A \hot \dvf\gen{\Delta^\bullet}_r\), which form a projective system \[r \mapsto A\gen{\Delta^\bullet}_r,\] of simplicial algebras upon varying the radius. Taking their connective algebraic \(K\)-theory yields a pro-spectrum \[k^{\mathrm{an}}(A) \defeq \lim_r \mathsf{K}(A \gen{\Delta^\bullet}_r),\] which they call the \textit{connective analytic \(K\)-theory} of an affinoid algebra \(A\). The extension to nonconnective pro-spectra involves a delooping construction, which the interested reader can find in \cite{MR4012551}*{Section 4.4}. Since our main motivation is to develop bivariant \(K\)-theory for torsionfree \(\dvr\)-algebras, we do not attempt to specialise our theory to that in (\cite{MR4012551}), but rather only construct an integral version of it.

\section{Analytic homotopies, stability and extensions}\label{sec:analytic-homotopy}

In what follows, let \(\Alg_V^\tf\) denote the category of complete, bornologically torsionfree \(\dvr\)-algebras. Its objects are complete, bornologically torsionfree \(\dvr\)-algebras and its morphisms are bounded \(\dvr\)-algebra homomorphisms. 

\subsection{Analytic homotopies}

Consider the overconvergent analytic \(n\)-simplex defined by 
\[\dvr\gen{\Delta^\bullet}^\updagger \defeq [n] \mapsto \dvr\gen{\Delta^n}^\updagger,\] where \(\dvr \gen{\Delta^n}^\updagger = \dvr[t_0,\dotsc,t_n]^\updagger/ \gen{\sum_{i = 0}^n t_i - 1}\). This is a simplicial object in the category \(\Alg_\dvr^\tf\).

\begin{lemma}\label{lem:3}
The simplicial ring \(\dvr\gen{\Delta^\bullet}^\updagger\) is weakly contractible. 
\end{lemma}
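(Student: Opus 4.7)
The plan is to reduce the weak contractibility of $\dvr\gen{\Delta^\bullet}^\updagger$ to that of the polynomial simplicial ring $\dvr[\Delta^\bullet]$, which is a classical fact, and then to propagate the contraction through the dagger completion functor.

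First, I would identify $\dvr\gen{\Delta^n}^\updagger$ as the dagger completion of the polynomial $\dvr$-algebra $\dvr[\Delta^n]$ (equipped with the fine bornology). The face and degeneracy maps of $\dvr[\Delta^\bullet]$ are $\dvr$-algebra homomorphisms given by $\dvr$-affine substitutions of the generators $t_0,\ldots,t_n$, hence they are bounded for the linear growth bornology and extend uniquely to the dagger completions. This identifies $\dvr\gen{\Delta^\bullet}^\updagger$ with the levelwise dagger completion of $\dvr[\Delta^\bullet]$ in the category of complete, torsionfree bornological $\dvr$-algebras.

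Second, I would invoke the classical weak contractibility of $\dvr[\Delta^\bullet]$ as a simplicial set (equivalently, as a simplicial $\dvr$-module). A standard proof produces an explicit contracting chain homotopy of the reduced normalized Moore complex, with $\dvr$-linear operators (not in general algebra homomorphisms) given by $\dvr$-affine substitutions of the generators together with multiplication by elements of $\dvr$; this is the algebraic shadow of the straight-line contraction of the standard simplex onto a vertex. Such operators are bounded for the linear growth bornology, so by the universal property of the dagger completion they extend to $\dvr$-linear bounded operators on $\dvr\gen{\Delta^\bullet}^\updagger$, yielding a contracting chain homotopy of the reduced Moore complex of the overconvergent simplex. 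The Dold--Kan correspondence then translates this into the desired weak contractibility of the underlying simplicial set.

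The main technical obstacle is to exhibit a contracting chain homotopy for $\dvr[\Delta^\bullet]$ whose coefficients actually lie in $\dvr$ (so that no denominators coming from the residue characteristic need to be inverted) and which is visibly bounded for the linear growth bornology. Once such a homotopy is pinned down for the polynomial simplex, propagating it to $\dvr\gen{\Delta^\bullet}^\updagger$ is a routine application of the functoriality and exactness properties of dagger completion.
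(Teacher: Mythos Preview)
Your strategy is sound and would eventually work, but it takes a substantially more circuitous route than the paper's proof, and the description of the contracting homotopy is imprecise in a way that matters.

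The paper does not pass through \(\dvr[\Delta^\bullet]\), chain complexes, or Dold--Kan at all. It exploits directly that \(\dvr\gen{\Delta^\bullet}^\updagger\) is a simplicial \emph{ring}: the coordinate \(t_0 \in \dvr\gen{\Delta^1}^\updagger\) is a 1-simplex with \(d_1(t_0)=1\) and \(d_0(t_0)=0\), so by Yoneda it determines a map \(f_{t_0}\colon \Delta[1] \to \dvr\gen{\Delta^\bullet}^\updagger\). The simplicial homotopy \((g,\alpha)\mapsto f_{t_0}(\alpha)\cdot g\) then visibly connects the identity to the zero map. This argument uses nothing about dagger completions beyond the fact that the target is a simplicial ring; it would work verbatim for any simplicial ring possessing a 1-simplex from \(1\) to \(0\).

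Two remarks on your version. First, your worry about denominators is unnecessary: the straight-line contraction is multiplication by \(t_0\), which is defined over any base ring. Second, your description of the chain homotopy operators as ``\(\dvr\)-affine substitutions together with multiplication by elements of \(\dvr\)'' is not quite right---the essential operator is multiplication by the ring element \(t_0\), not by a scalar in \(\dvr\). This distinction matters for your boundedness check: you need that multiplication by \(t_0\) is bounded for the linear growth bornology, which holds because \(\dvr[\Delta^n]_{\mathrm{lg}}\) is a bornological algebra (so left multiplication by any element is bounded), but this is a different and slightly less trivial statement than boundedness of scalar multiplication. Once you observe this, your approach goes through; but at that point you have essentially rediscovered the paper's homotopy, only filtered through the Moore complex and then reassembled via Dold--Kan, whereas the paper writes it down in one line at the simplicial level.
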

\begin{proof}
Denote by \((d_i)_{i \geq 0}\) the face maps of the simplicial group \(\dvr\gen{\Delta^\bullet}^\updagger\). These are defined as \[d_i(f)(t_0,\dotsc,t_n) = f(t_0,\dotsc,t_{i-1},0,t_i,\dotsc,t_n)\] for \(f \in \dvr\gen{\Delta^n}^\updagger\). By the Yoneda lemma, a \(1\)-simplex \(x_0 \in \dvr\gen{\Delta^1}^\updagger\) corresponds to a morphism of simplicial sets \(f_{x_0} \colon \Delta[1] \to \dvr\gen{\Delta^\bullet}^\updagger\) such that \(f_{x_0}(\delta_0) = d_1(x_0) = 1\) and \(f_{x_0}(\delta_1) = d_0(x_0) = 0\) for \(\delta_0\) and \(\delta_1 \in \Hom_{\Delta}([0],[1])\). The required simplicial homotopy is given by \(\dvr\gen{\Delta^\bullet}^\updagger \times \Delta[1] \to \dvr\gen{\Delta^\bullet}^\updagger\), \((g,t) \mapsto f_{x_0}(t)\cdot g\). This shows that the identity on \(\dvr\gen{\Delta^\bullet}^\updagger\) is null-homotopic, as required.
\end{proof}

Using the overconvergent analytic simplex, we simplicially enrich our category. Let \(A\) be a complete, bornologically torsionfree \(\dvr\)-algebra. We define the simplicial ring \(A\gen{\Delta^\bullet}^\updagger \colon [n] \mapsto A \hot \dvr \gen{\Delta^n}^\updagger\). The \textit{mapping space} bifunctor \(\Hom_{\Alg_V^\tf} \colon {\Alg_V^\tf}^\op \times \Alg_V^\tf \to \mathbb{S}\) is defined in the obvious way:
\begin{equation}\label{eq:mapping-space}
\Hom_{\Alg_V^\tf}(A,B) \colon [n] \mapsto \Hom_{\Alg_V^\tf}(A, B\gen{\Delta^n}^\updagger),
\end{equation} wherein the composition rule for three algebras in \(\Alg_V^\tf\) is easy to define. With the following lemma, we conclude that \(\Alg_\dvr^\tf\) is a simplicial category.

\begin{lemma}\label{lem:simplicial-category}
For \(A \in \Alg_V^\tf\), the contravariant representable functor \(\Hom_{\Alg_V^\tf}(-,A)\) has a left adjoint defined by the functor \[A^{(-)} \colon \mathbb{S} \to {\Alg_V^\tf}^\op, \quad A^X \defeq \lim_{\Delta^n \to X} A\gen{\Delta^n}^\updagger,\] for a simplicial set \(X\).
\end{lemma}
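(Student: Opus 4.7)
The plan is to establish the natural bijection
\[\Hom_{\Alg_V^\tf}(B, A^X) \cong \Hom_{\mathbb{S}}(X, \Hom_{\Alg_V^\tf}(B, A))\]
for each $B \in \Alg_V^\tf$ and each simplicial set $X$, which expresses the desired adjunction. I would proceed in two stages: first verify the bijection on representable $X = \Delta^n$, then extend to all $X$ using that every simplicial set is canonically a colimit of representables.

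Before doing anything else, I would check that $\Alg_V^\tf$ admits all small limits, so that the formula $A^X \defeq \lim_{\Delta^n \to X} A\gen{\Delta^n}^\updagger$ actually defines an object of $\Alg_V^\tf$. This is a routine upgrade of Lemma \ref{lem:complete-category} from modules to algebras: the multiplication on a limit of bornological algebras is induced by the universal property and is automatically bounded, while completeness and torsionfreeness are inherited as in Lemma \ref{lem:complete-category}.

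For representable $X = \Delta^n$, the category of simplices $\Delta/\Delta^n$ has a terminal object given by the identity $\id \colon \Delta^n \to \Delta^n$, so the limit collapses to $A^{\Delta^n} = A\gen{\Delta^n}^\updagger$. The Yoneda lemma for simplicial sets then identifies
\[\Hom_{\mathbb{S}}(\Delta^n, \Hom_{\Alg_V^\tf}(B, A)) = \Hom_{\Alg_V^\tf}(B, A)_n = \Hom_{\Alg_V^\tf}(B, A\gen{\Delta^n}^\updagger)\]
by \eqref{eq:mapping-space}, so the adjunction bijection is tautological in this case.

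For general $X$, I would invoke the density formula $X \cong \varinjlim_{\Delta^n \to X} \Delta^n$ indexed by $\Delta/X$. Since $\Hom_{\mathbb{S}}(-, Y)$ turns colimits into limits in the first variable and $\Hom_{\Alg_V^\tf}(B, -)$ preserves limits, both sides of the desired bijection agree with $\lim_{\Delta^n \to X} \Hom_{\Alg_V^\tf}(B, A\gen{\Delta^n}^\updagger)$, which reduces the claim to the representable case already dispatched. The only mildly delicate point, and the main obstacle I anticipate, is bookkeeping: one must verify that the transition maps in the indexing category $\Delta/X$ correspond, via Yoneda, precisely to the simplicial structure maps of $\Hom_{\Alg_V^\tf}(B, A)$ coming from the simplicial object $A\gen{\Delta^\bullet}^\updagger$. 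This is automatic from the construction but worth spelling out to confirm naturality of the bijection in both variables.
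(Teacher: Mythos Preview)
Your proposal is correct and is essentially the same argument as the paper's, just unpacked. The paper proves the lemma by invoking the general nerve--realisation adjunction: for any cosimplicial object \(F \colon \Delta \to \mathcal{C}\) in a cocomplete category, the left Kan extension of \(F\) along the Yoneda embedding \(\Delta \hookrightarrow \mathbb{S}\) is left adjoint to the ``singular'' functor \(R(c)_n = \Hom_{\mathcal{C}}(F([n]),c)\); it then specialises to \(\mathcal{C} = {\Alg_\dvr^\tf}^\op\) and appeals to Lemma~\ref{lem:complete-category} for the existence of the relevant (co)ends. Your density-formula argument (check on representables via Yoneda, then pass to colimits) is precisely the standard proof of that general adjunction, spelled out by hand. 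The only additional content is your remark that Lemma~\ref{lem:complete-category} upgrades from modules to algebras, which the paper leaves implicit.
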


\begin{proof}
This is part of a general construction. Let \(\mathcal{C}\) be a locally small category with colimits, and \(F \colon \Delta \to \mathcal{C}\) a covariant functor. Then the functor \(R \colon \mathcal{C} \to \mathbb{S}\) defined by \(R(c)[n] \defeq \Hom_{\mathcal{C}}(F([n]), c)\) has a left adjoint, given by the Kan extension of \(F\) along the Yoneda embedding \(y \colon \Delta \to \mathbb{S}\). The resulting object is a coend, which is a colimit and hence exists by hypothesis. In our case, the category \(\mathcal{C}\) is \({\Alg_\dvr^\tf}^\op\), and the functor \(F\) is the contravariant functor \(\Delta^\op \to \Alg_\dvr^\tf\), \([n] \mapsto A\gen{\Delta^\bullet}^\updagger\). A coend in \(\mathcal{C}\) is an end in \(\mathcal{C}^\op\), which exists in our case since \(\Alg_\dvr^\tf\) has all limits by Lemma ~\ref{lem:complete-category}. \qedhere
\end{proof}

\begin{lemma}\label{lem:2}
For \(X \in \mathbb{S}\) and \(B \in \Alg_\dvr^\tf\), we have \(B^X \cong \Hom_{\mathbb{S}}(X, B\gen{\Delta^\bullet}^\updagger)\).  
\end{lemma}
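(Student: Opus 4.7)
The plan is to reduce the statement to Yoneda's lemma by exhibiting both sides as the same limit. The key observation is the standard fact that every simplicial set is canonically a colimit of its simplices, which will let me translate the colimit formula for $X$ into the limit formula defining $B^X$.

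First, I would write $X$ as the tautological colimit
\[
X \;\cong\; \varinjlim_{\Delta^n \to X} \Delta^n
\]
in $\mathbb{S}$, indexed by the category of simplices of $X$ (the category of elements of $X$ viewed as a presheaf on $\Delta$). Since the contravariant $\Hom$-functor converts colimits in the first variable to limits, applying $\Hom_\mathbb{S}(-,\, B\gen{\Delta^\bullet}^\updagger)$ yields
\[
\Hom_\mathbb{S}(X,\, B\gen{\Delta^\bullet}^\updagger) \;\cong\; \varprojlim_{\Delta^n \to X} \Hom_\mathbb{S}(\Delta^n,\, B\gen{\Delta^\bullet}^\updagger).
\]

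Next, I would invoke Yoneda's lemma to identify each term $\Hom_\mathbb{S}(\Delta^n,\, B\gen{\Delta^\bullet}^\updagger) \cong B\gen{\Delta^n}^\updagger$. This is in fact an isomorphism of bornological $\dvr$-algebras, not merely of sets, because the algebra structure on $\Hom_\mathbb{S}(\Delta^n,\, B\gen{\Delta^\bullet}^\updagger)$ is inherited pointwise from the simplicial algebra $B\gen{\Delta^\bullet}^\updagger$ and corresponds in degree $n$ precisely to the algebra structure on $B\gen{\Delta^n}^\updagger$. Stitching the two displays together gives
\[
\Hom_\mathbb{S}(X,\, B\gen{\Delta^\bullet}^\updagger) \;\cong\; \varprojlim_{\Delta^n \to X} B\gen{\Delta^n}^\updagger \;=\; B^X,
\]
where the limit on the right is taken in $\Alg_\dvr^\tf$, a category shown to be complete in Lemma~\ref{lem:complete-category}.

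The only subtlety is checking that the algebra structures on the two sides agree. This reduces to the fact that the forgetful functor from $\Alg_\dvr^\tf$ to bornological $\dvr$-modules creates limits, so the algebra structure on $B^X$ coincides with the pointwise structure on $\Hom_\mathbb{S}(X,\, B\gen{\Delta^\bullet}^\updagger)$. I expect this to be a routine unravelling of definitions rather than a substantive obstacle; essentially the entire content of the lemma is the co-Yoneda presentation of $X$ combined with ordinary Yoneda.
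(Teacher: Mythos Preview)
Your argument is correct, but it takes a different route from the paper. You unwind the definition of $B^X$ directly via the co-Yoneda presentation $X \cong \varinjlim_{\Delta^n \to X} \Delta^n$ together with ordinary Yoneda, which is the natural ``by hand'' computation. The paper instead leverages the adjunction of Lemma~\ref{lem:simplicial-category},
\[
\Hom_{\Alg_\dvr^\tf}(A, B^X) \;\cong\; \Hom_{\mathbb{S}}\bigl(X, \Hom_{\Alg_\dvr^\tf}(A, B)\bigr),
\]
and then probes both sides with the free non-unital algebra $A = t\,\dvr[t]$ (fine bornology), using that $\Hom_{\Alg_\dvr^\tf}(t\,\dvr[t], C) \cong C$ naturally in $C$; this representability trick extracts the underlying object on each side in one stroke. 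Your approach is more self-contained and does not invoke the adjunction, while the paper's is shorter once that adjunction is in hand; the content is ultimately the same, since the Kan-extension construction behind Lemma~\ref{lem:simplicial-category} is precisely the co-Yoneda formula you are using.
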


\begin{proof}
By the adjuction in Lemma \ref{lem:simplicial-category}, we have \[\Hom_{\Alg_\dvr^\tf}(A,B^X) \cong \Hom_{\mathbb{S}}(X, \Hom_{\Alg_\dvr^\tf}(A,B)),\] for \(X \in \mathbb{S}\) and \(A\), \(B \in \Alg_\dvr^\tf\). Then for \(A = t \dvr[t]\) with the fine bornology, bounded algebra homomorphisms \(A \to C\) to a complete, bornologically torsionfree algebra \(C\) are in bijection with bounded \(\dvr\)-linear maps \(\dvr \to C\), which in turn are in bijection with \(C\). This applies to \(B^X\) on the left hand side, and to each of the \(n\)-simplicies \(\Hom_{\Alg_\dvr^\tf}(A, B\gen{\Delta^n}^\updagger)\) on the right hand side to yield the desired result.
\end{proof}

Now let \(\mathbb{S}_*\) denote the category of pointed simplicial sets. Suppose \((K,*) \in \mathbb{S}_*\), we define \begin{multline*}
A^{(K,*)} \defeq \mathsf{Hom}_{\mathbb{S}_*}((K,*), A\gen{\Delta^\bullet}^\updagger) \\
\cong \ker(\Hom_{\mathbb{S}}(K,A) \to \Hom_{\mathbb{S}}(*,A)) \cong \ker(A^K \to A)
\end{multline*} for an algebra \(A \in \Alg_\dvr^\tf\).

We will need the following at several points in the paper:

\begin{lemma}\label{lem:4}
Let \(K\) be a finite simplicial set, \(*\) a base point of \(K\), and \(A\) a complete, bornologically torsionfree \(\dvr\)-algebra. Then there are natural isomorphisms \[\dvr^K \hot A \cong A^K, \quad \dvr^{(K,*)} \hot A \cong A^{(K,*)}.\]
\end{lemma}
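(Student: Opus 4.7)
The plan is induction on the number of non-degenerate simplices of $K$. The base case $K = \Delta^0$ reduces both sides to $A$ via the natural maps $\dvr^{\Delta^0} \hot A = \dvr \hot A \cong A = A^{\Delta^0}$. For the inductive step, any finite $K$ with a non-degenerate simplex of top dimension $n$ decomposes as a pushout of simplicial sets
\[ K \cong L \cup_{\partial \Delta^n} \Delta^n, \]
where $L$ and $\partial \Delta^n$ each have strictly fewer non-degenerate simplices than $K$. The contravariant right adjoint $\dvr^{(-)}$ of Lemma \ref{lem:simplicial-category} turns this pushout into a pullback, and likewise $A^{(-)}$ presents $A^K$ as the pullback $A^L \times_{A^{\partial \Delta^n}} A\gen{\Delta^n}^\updagger$. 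By the inductive hypothesis applied to $L$ and $\partial \Delta^n$ (together with the trivial identification for $\Delta^n$), the three corners are already identified with $A \hot \dvr^L$, $A \hot \dvr^{\partial \Delta^n}$, and $A \hot \dvr^{\Delta^n}$ respectively.

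The content of the inductive step is therefore to show that $A \hot (-)$ preserves this particular pullback of complete torsionfree algebras. I would write the pullback as a kernel and argue that it fits into a short exact sequence
\[ 0 \to \dvr^K \to \dvr^L \oplus \dvr\gen{\Delta^n}^\updagger \to \dvr^{\partial \Delta^n} \to 0 \]
of complete, torsionfree bornological $\dvr$-modules, where the first map is a bornological embedding and the second an admissible surjection admitting a bounded $\dvr$-linear section. Upon tensoring with $A$, Lemma \ref{lem:tensor-exact} preserves the embedding on the left, while the cokernel on the right is preserved because $A \hot (-)$ is a left adjoint on complete bornological $\dvr$-modules and hence right exact. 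Combining these two facts, the pullback is preserved, giving $A \hot \dvr^K \cong A^K$.

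For the pointed version, we use the identification $A^{(K,*)} = \ker(A^K \to A)$ stated above, together with the split retraction $A^K \to A$ coming from the basepoint inclusion $* \hookrightarrow K$ (and likewise for $\dvr^K$). Since split short exact sequences are preserved by any additive functor, including $A \hot (-)$, the unpointed isomorphism restricts to the pointed one. The main obstacle is the explicit verification that the boundary restriction $\dvr\gen{\Delta^n}^\updagger \to \dvr^{\partial \Delta^n}$ is an admissible quotient in the bornological sense and admits a bounded $\dvr$-linear section; I expect this to follow from the concrete polynomial description of the overconvergent simplex, using e.g.\ barycentric coordinates on faces to build a section of finite $\dvgen$-adic norm.
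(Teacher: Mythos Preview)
Your approach is correct but takes a more elaborate route than the paper. The paper argues directly that \(A \hot (-)\) commutes with the finite limit defining \(\dvr^K\): since the category is additive, finite products are finite direct sums (which the completed tensor product preserves), and kernels are preserved by Lemma~\ref{lem:tensor-exact}. No inductive cell decomposition is required.

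Your cell-by-cell induction is valid, but the ``main obstacle'' you flag --- that the boundary restriction \(\dvr\gen{\Delta^n}^\updagger \to \dvr^{\partial\Delta^n}\) is a bornological quotient with a bounded linear section --- is something the paper's abstract argument never has to confront. In fact, the combination you already invoke (Lemma~\ref{lem:tensor-exact} for the left-hand embedding plus right exactness of \(A \hot (-)\) for the cokernel) shows that \(A \hot (-)\) preserves any short exact sequence of complete torsionfree bornological modules; this is essentially the paper's observation, applied directly to the kernel realising the finite limit. Once you see this, the bounded section is not needed even within your inductive framework --- you only need surjectivity of the difference map as a bornological quotient, and the abstract kernel-preservation makes even that verification unnecessary. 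Your treatment of the pointed case via the split extension \(\dvr^{(K,*)} \into \dvr^K \onto \dvr\) is identical to the paper's.
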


\begin{proof}
We first consider the unpointed part. Here we need to show that the canonical map \[V^K \hot A = (\underset{\Delta^n \to K}\lim \dvr \gen{\Delta^n}^\updagger ) \hot A \to \underset{\Delta^n \to K}\lim (\dvr\gen{\Delta^n}^\updagger \hot A) = A^K\] is an isomorphism. That is, tensoring with \(A\) preserves limits. Since \(K\) is a finite simplicial set, it suffices to show that \(A \hot -\) commutes with finite limits, or, finite products and kernels. Since the category of complete bornological modules is additive, finite products are finite direct sums, which the completed bornological tensor product preserves. That tensoring with \(A\) preserves kernels follows from Lemma \ref{lem:tensor-exact}. The pointed part follows from the fact that the extension of complete bornological \(\dvr\)-modules \[\dvr^{(K,*)} \into \dvr^K \onto \dvr\] splits by a bounded \(\dvr\)-linear section, so that \(V^K \cong V^{(K,*)} \oplus V\). Now tensor by \(A\) and use the unpointed part to conclude that \(A \hot V^{(K,*)} \cong \ker(A^K \to A) \cong A^{(K,*)}\).
\end{proof}

Now consider the simplicial subdivision functor \(\mathsf{sd} \colon \mathbb{S} \to \mathbb{S}\) and its accompanying natural transformation \(h \colon \mathsf{sd} \Rightarrow 1\) (see \cite{Goerss-Jardine:Simplicial}*{III.4} for the construction). There is an induced pro-system of simplicial sets \[\mathsf{sd}^\bullet(K) \colon \mathsf{sd}^0(K) \overset{h_K}\leftarrow \mathsf{sd}^1(K) \overset{h_{\mathsf{sd}(K)}}\leftarrow \cdots.\] The functor \(A^{(-)} \colon \mathbb{S}^\op \to \mathsf{Alg}_\dvr^\tf\) extends to one on inductive systems \(\mathsf{Ind}(\mathsf{Alg}_\dvr^\tf)\) of complete, torsionfree bornological algebras by termwise application. Applied to \(\mathsf{sd}^\bullet(K)\), we get an inductive system \(A^{\mathsf{sd}^\bullet(K)} = \setgiven{A^{\mathsf{sd}^n(K)}}{n \in \N}\) of complete, torsionfree bornological algebras. Fixing \(K\), the functor \((-)^{\mathsf{sd}^\bullet(K)} \colon \mathsf{Alg}_\dvr^\tf \to \mathsf{Ind}(\mathsf{Alg}_\dvr^\tf)\) admits an extension to the category \(\mathsf{Ind}(\mathsf{Alg}_\dvr^\tf)\). The following carries over from the algebraic setting mutatis mutandis:

\begin{lemma}\label{lem:5}
For \(A \in \mathsf{Ind}(\mathsf{Alg}_\dvr^\tf)\), the functor \(A^{\mathsf{sd}^\bullet(-)} \colon \mathbb{S}^\op \to \mathsf{Ind}(\mathsf{Alg}_\dvr^\tf)\) preserves finite limits. 
\end{lemma}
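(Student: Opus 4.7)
The plan is to reduce preservation of finite limits to a levelwise statement, using that finite limits in $\mathsf{Ind}(\mathsf{Alg}_\dvr^\tf)$ of ind-systems sharing a common directed indexing set are computed termwise. Writing a finite limit in $\mathbb{S}^{\op}$ as a finite colimit $K \cong \mathrm{colim}_{i \in I} K_i$ in $\mathbb{S}$, the goal becomes to produce a natural isomorphism $A^{\mathsf{sd}^\bullet(K)} \cong \lim_{i} A^{\mathsf{sd}^\bullet(K_i)}$ in $\mathsf{Ind}(\mathsf{Alg}_\dvr^\tf)$, first for $A \in \mathsf{Alg}_\dvr^\tf$ and then by filtered-colimit reasoning for general $A \in \mathsf{Ind}(\mathsf{Alg}_\dvr^\tf)$.

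First I would note that the simplicial subdivision functor $\mathsf{sd} \colon \mathbb{S} \to \mathbb{S}$ is a left adjoint to the Kan $\mathrm{Ex}$ functor, and hence preserves all colimits; iterating gives the same for $\mathsf{sd}^n$, so $\mathsf{sd}^n(K) \cong \mathrm{colim}_i \mathsf{sd}^n(K_i)$ in $\mathbb{S}$. Next, for fixed $A \in \mathsf{Alg}_\dvr^\tf$, Lemma~\ref{lem:simplicial-category} identifies $A^{(-)} \colon \mathbb{S} \to (\mathsf{Alg}_\dvr^\tf)^{\op}$ as a left adjoint, so it preserves colimits. Translating, the contravariant functor $A^{(-)} \colon \mathbb{S}^{\op} \to \mathsf{Alg}_\dvr^\tf$ sends finite colimits in $\mathbb{S}$ to finite limits in $\mathsf{Alg}_\dvr^\tf$ (which exist by Lemma~\ref{lem:complete-category}), producing a natural isomorphism $A^{\mathsf{sd}^n(K)} \cong \lim_i A^{\mathsf{sd}^n(K_i)}$ in $\mathsf{Alg}_\dvr^\tf$ for every $n \in \N$.

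Now the passage to ind-systems. For each $i \in I$, the ind-object $A^{\mathsf{sd}^\bullet(K_i)} = \{A^{\mathsf{sd}^n(K_i)}\}_{n \in \N}$ is indexed by the common directed set $\N$, with transition maps induced by the single natural transformation $h \colon \mathsf{sd} \Rightarrow 1$ applied to $K_i$. A finite diagram of ind-objects over a common $\N$-indexing admits its limit in $\mathsf{Ind}(\mathsf{Alg}_\dvr^\tf)$ as the levelwise finite limit taken in $\mathsf{Alg}_\dvr^\tf$; combining with the previous step gives $\lim_i A^{\mathsf{sd}^\bullet(K_i)} \cong \{\lim_i A^{\mathsf{sd}^n(K_i)}\}_{n \in \N} \cong A^{\mathsf{sd}^\bullet(K)}$. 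Finally, for $A = \mathrm{colim}_j A_j \in \mathsf{Ind}(\mathsf{Alg}_\dvr^\tf)$, the extension is routine because filtered colimits commute with finite limits in ind-categories.

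The main obstacle I expect is the claim that the finite limit of $\{A^{\mathsf{sd}^\bullet(K_i)}\}_i$ in $\mathsf{Ind}(\mathsf{Alg}_\dvr^\tf)$ is realised levelwise along $\N$. This is a standard feature of ind-categories once one knows all the ind-systems are indexed by the same filtered poset with naturally compatible transition maps, which is precisely why the pro-system $\mathsf{sd}^\bullet(K)$ is defined using the universal natural transformation $h$ rather than an ad hoc family of cofinal subsystems.
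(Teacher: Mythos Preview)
Your proof is correct and follows essentially the same approach as the paper: both use that \(\mathsf{sd}\) is a left adjoint (hence colimit-preserving), that \(A^{(-)}\) takes colimits in \(\mathbb{S}\) to limits in \(\mathsf{Alg}_\dvr^\tf\) by the adjunction of Lemma~\ref{lem:simplicial-category}, and that finite limits in \(\mathsf{Ind}(\mathsf{Alg}_\dvr^\tf)\) are computed termwise. Your treatment is slightly more detailed in spelling out the common \(\N\)-indexing and the extension to general \(A = (A_i)_{i \in I}\) via commutation of filtered colimits with finite limits, whereas the paper compresses this into the single observation that \(A^{\mathsf{sd}^\bullet(\mathrm{colim}_l K_l)}\) is the \((I\times\N)\)-indexed ind-system \(\{\lim_l A_i^{\mathsf{sd}^n(K_l)}\}\).
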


\begin{proof}
The simplicial subdivision functor \(\mathsf{sd} \colon \mathbb{S} \to \mathbb{S}\) is a left adjoint functor, so it preserves all colimits. Furthermore, for \(B \in \Alg_\dvr^\tf\), the functor \(B^{(-)}\) is a right adjoint functor, so it preserves all limits. So it takes colimits in \(\mathbb{S}^\op\) to limits in \(\Alg_\dvr^\tf\). Now if \(A = (A_i)_{i \in I} \in \mathsf{Ind}(\Alg_\dvr^\tf)\), then \[A^{\mathsf{sd}^\bullet(\mathrm{colim}_l K_l)} = \setgiven{\lim_l A_i^{\mathsf{sd}^n(K_l)}}{(i,n) \in I \times \N}\] is a limit, since finite limits are computed termwise in \(\mathsf{Ind}(\Alg_\dvr^\tf)\).   
\end{proof}

Now let \(A\) and \(B\) be inductive systems of complete, bornologically torsionfree \(\dvr\)-algebras. We can define their mapping space

\[\Hom_{\mathsf{Ind}(\Alg_\dvr^\tf)}^\bullet(A,B) \defeq ([n] \mapsto \Hom_{\mathsf{Alg}_\dvr^\tf}(A, B\gen{\Delta^n}^\updagger))\] by extending the mapping space bifunctor defined in Equation \ref{eq:mapping-space} to inductive systems of algebras. We can also define 
\[ 
\HOM_{\mathsf{Ind}(\mathsf{Alg}^\tf)}^\bullet(A,B) \defeq ([n] \mapsto  \Hom_{\mathsf{Alg}_\dvr^\tf}(A, B^{\mathsf{sd}^\bullet(\Delta^\bullet)})),
\] and the two mapping spaces are related as follows:

\begin{proposition}\label{prop:6}
Let \(A\) and \(B\) be inductive systems of complete, torsionfree bornological algebras. Then \[\mathsf{Hom}_{\mathbb{S}}(K, \HOM_{\mathsf{Ind}(\mathsf{Alg}_\dvr^\tf)}^\bullet(A,B)) \cong \Hom_{\mathsf{Ind}(\mathsf{Alg}_\dvr^\tf)}^\bullet(A,B^{\mathsf{sd}^{\bullet K}}).\] Furthermore, when \(A\) is a constant inductive system, then \(\HOM_{\mathsf{Ind}(\mathsf{Alg}_\dvr^\tf)}(A,B)\) is a fibrant simplicial set (or a Kan complex).
\end{proposition}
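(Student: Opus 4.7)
The first identity is a formal unwinding of the adjunctions established earlier. I write $K$ as a colimit of its simplices, $K \cong \mathrm{colim}_{\Delta^n \to K} \Delta^n$ (over the simplex category $\Delta/K$), and use $\mathsf{Hom}_\mathbb{S}(K,-) \cong \lim_{\Delta^n \to K}(-)_n$ to compute
\begin{align*}
\mathsf{Hom}_\mathbb{S}(K, \HOM^\bullet(A, B))
&\cong \lim_{\Delta^n \to K} \Hom_{\mathsf{Ind}(\Alg_\dvr^\tf)}(A, B^{\mathsf{sd}^\bullet \Delta^n}) \\
&\cong \Hom_{\mathsf{Ind}(\Alg_\dvr^\tf)}\bigl(A,\, \lim_{\Delta^n \to K} B^{\mathsf{sd}^\bullet \Delta^n}\bigr),
\end{align*}
the second step using that $\Hom$ preserves limits in the second variable, which is legal because $\Alg_\dvr^\tf$ is complete by Lemma \ref{lem:complete-category}. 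For each fixed ind-index $m$, the functor $B^{(-)}$ of Lemma \ref{lem:simplicial-category} is a right adjoint (so turns colimits into limits), and $\mathsf{sd}^m$ is left adjoint to Kan's $\mathrm{Ex}^m$ (so commutes with colimits); combining yields $\lim_{\Delta^n \to K} B^{\mathsf{sd}^m \Delta^n} \cong B^{\mathrm{colim}_{\Delta^n \to K}\mathsf{sd}^m \Delta^n} = B^{\mathsf{sd}^m K}$, so the ind-object in the display is precisely $B^{\mathsf{sd}^\bullet K}$. The whole identification is natural in $K$, so replacing $K$ by $K \times \Delta^m$ and using $B^X \hot V^{\Delta^m} \cong B^{X \times \Delta^m}$ (Lemma \ref{lem:4} applied to the finite $\Delta^m$) upgrades the bijection on $0$-simplices to a simplicial isomorphism. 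The commutation of filtered $\mathrm{colim}$ and small $\lim$ implicit at the ind-level is covered by Lemma \ref{lem:5} in the regime that matters for the sequel, namely for finite $K$.

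For the Kan condition on $\HOM(A,B)$ with $A$ constant, I reduce horn-filling to an extension problem in $\mathsf{Ind}(\Alg_\dvr^\tf)$ via the isomorphism just established, applied to the finite simplicial sets $\Lambda^n_k \hookrightarrow \Delta^n$. A map $\Lambda^n_k \to \HOM^\bullet(A, B)$ corresponds to a morphism $f\colon A \to B^{\mathsf{sd}^\bullet \Lambda^n_k}$ in $\mathsf{Ind}(\Alg_\dvr^\tf)$; since $A$ is constant, $f$ is represented by a single algebra map $f_0 \colon A \to B_j^{\mathsf{sd}^N \Lambda^n_k}$ for some ind-index $j$ and subdivision level $N$. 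I would then invoke the classical combinatorial fact, used in Karoubi-Villamayor theory and in the algebraic bivariant $K$-theory of Corti\~nas-Thom, that for each horn inclusion $\Lambda^n_k \hookrightarrow \Delta^n$ there exists an integer $M \geq N$ and a simplicial retraction $r \colon \mathsf{sd}^M \Delta^n \to \mathsf{sd}^M \Lambda^n_k$ of the subdivided inclusion. Applying $B^{(-)}$ (contravariant) to $r$ yields $r^* \colon B_j^{\mathsf{sd}^M \Lambda^n_k} \to B_j^{\mathsf{sd}^M \Delta^n}$, whose composition with the appropriate translate of $f_0$ --- obtained by pushing $N$ up to $M$ via the structure transformation $h \colon \mathsf{sd} \Rightarrow 1$ --- provides the desired extension $A \to B_j^{\mathsf{sd}^M \Delta^n}$. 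Restricting this back to $\mathsf{sd}^M \Lambda^n_k$ recovers $f_0$ because $r$ is a retract, so it genuinely fills the given horn.

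The main obstacle is this last geometric ingredient: the existence of a simplicial retraction from the iterated subdivision of a simplex onto the subdivision of a horn for sufficiently large $M$. This is a non-trivial combinatorial fact that needs either a clean citation or a direct verification adapted to the present setting. All other steps are formal consequences of the adjunctions and compatibilities already recorded in Lemmas \ref{lem:complete-category}, \ref{lem:simplicial-category}, \ref{lem:4}, and \ref{lem:5}, together with the fact that constancy of $A$ turns an ind-morphism into an honest algebra morphism.
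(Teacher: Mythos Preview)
Your argument is essentially the one in Corti\~nas--Thom, which is precisely what the paper's proof consists of: a bare citation to \cite{Cortinas-Thom:Bivariant_K}*{Proposition 3.2.2, Theorem 3.2.3} with the remark that the proofs carry over mutatis mutandis. You have in effect unpacked that citation, and the overall shape---adjunction juggling for the first isomorphism, then reducing the Kan condition to a subdivision-and-retract argument using constancy of \(A\)---is exactly right.

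One small imprecision worth flagging: the combinatorial fact you need is not that \(\mathsf{sd}^M \Delta^n\) retracts simplicially onto \(\mathsf{sd}^M \Lambda^n_k\) for the same \(M\) (which is generally false), but rather that there is a simplicial map \(r \colon \mathsf{sd}^M \Delta^n \to \Lambda^n_k\) to the \emph{unsubdivided} horn whose restriction along \(\mathsf{sd}^M(\iota)\) equals the iterated last-vertex map \(h^M \colon \mathsf{sd}^M \Lambda^n_k \to \Lambda^n_k\). This is what furnishes the transition in the ind-system \(B^{\mathsf{sd}^\bullet(-)}\), and it is exactly the lemma Corti\~nas--Thom invoke. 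Your phrase ``pushing \(N\) up to \(M\) via the structure transformation \(h\)'' already gestures at the correct mechanism, so the correction is cosmetic; but as you yourself note, this is the one non-formal ingredient and it deserves a precise statement or citation.
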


\begin{proof}
The proofs in \cite{Cortinas-Thom:Bivariant_K}*{Proposition 3.2.2, Theorem 3.2.3} carry over mutatis-mutandis. 
\end{proof}

We now introduce the notion of homotopy that is relevant for us. Recall that \[A\gen{\Delta^1}^\updagger = A \hot \dvr[t]^\updagger,\] where \(\dvr[t]^\updagger\) denotes the dagger completion of the polynomial ring in one variable. There is a canonical inclusion homomorphism \(\iota \colon A \to A \gen{\Delta^1}\) splitting the evaluation homomorphisms \(\ev_t \colon A \gen{\Delta^1}^\updagger \to A\) at \(t = 0,1\). An \textit{elementary homotopy} \(F \colon A_1 \to A_2\gen{\Delta^1}^\updagger\) between two bounded \(\dvr\)-algebra homomorphisms \(f_1, f_2 \colon A_0 \rightrightarrows A_1\) is a bounded \(\dvr\)-algebra homomorphism satisfying \(\ev_t \circ F = f_t\). We say that two morphisms between complete, torsionfree bornological algebras are \textit{homotopic} if they can be connected by a composition of elementary homotopies - that is, homotopy is the equivalence relation generated by elementary homotopies. Denote by \([A,B]\) the set of homotopy classes of algebra homomorphisms \(A \to B\).

Now let \(A \in \Alg_\dvr^\tf\). We define \[A^{\mathcal{S}^1} \defeq A^{(\mathsf{sd}^\bullet(S^1), *)}, \quad A^{\mathcal{S}^{n+1}} \defeq (A^{\mathcal{S}^n})^{\mathcal{S}^1},\] using which we can define the homotopy groups of the mapping space:

\begin{theorem}\label{thm:mapping space}
There are natural isomorphisms \[[A, B^{\mathcal{S}^1}] \cong \pi_1(\HOM_{\mathsf{Ind}(\mathsf{Alg}_\dvr^\tf)}^\bullet(A, B)).\]
\end{theorem}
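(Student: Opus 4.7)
The plan is to identify \(\pi_1\) with \(\pi_0\) of an internal loop space using the standard fact \(\pi_1(X) \cong \pi_0(\Omega X)\) for a pointed Kan complex \(X\), apply a pointed refinement of the exponential adjunction in Proposition~\ref{prop:6}, and finally match \(\pi_0\) of the resulting mapping complex with elementary-homotopy classes of algebra homomorphisms. By Proposition~\ref{prop:6}, the simplicial set \(X \defeq \HOM_{\mathsf{Ind}(\mathsf{Alg}_\dvr^\tf)}^\bullet(A,B)\) is Kan, with basepoint the zero homomorphism \(0 \colon A \to B\), so
\[
\pi_1(X,0) \;\cong\; \pi_0\bigl(\mathsf{Hom}_{\mathbb{S}_*}((S^1,*),X)\bigr).
\]

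Next I would derive a pointed version of the adjunction from Proposition~\ref{prop:6}. Since \(B^{(K,*)} = \ker(B^K \to B)\) by definition, and the basepoint map \(* \to X\) corresponds under the adjunction precisely to the zero homomorphism, restricting the unpointed adjunction to basepoint-preserving maps gives
\[
\mathsf{Hom}_{\mathbb{S}_*}\bigl((K,*), X\bigr) \;\cong\; \HOM_{\mathsf{Ind}(\mathsf{Alg}_\dvr^\tf)}^\bullet\bigl(A, B^{(\mathsf{sd}^\bullet K, *)}\bigr).
\]
Specialising to \(K = S^1\) and using the definition \(B^{\mathcal{S}^1} = B^{(\mathsf{sd}^\bullet S^1, *)}\), this identifies \(\Omega X\) with \(Y \defeq \HOM^\bullet(A, B^{\mathcal{S}^1})\), which is again Kan by Proposition~\ref{prop:6}.

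It remains to show that \(\pi_0(Y) = [A, B^{\mathcal{S}^1}]\). Writing \(C \defeq B^{\mathcal{S}^1}\), the set of \(0\)-simplices of \(Y\) is \(\Hom_{\mathsf{Ind}(\mathsf{Alg}_\dvr^\tf)}(A, C)\) since \(\mathsf{sd}^k \Delta^0 = \Delta^0\) makes \(C^{\mathsf{sd}^\bullet \Delta^0}\) constant at \(C\); two such homomorphisms lie in the same \(\pi_0\)-class precisely when they are joined by a zigzag of \(1\)-simplices, that is, by morphisms \(A \to C^{\mathsf{sd}^k \Delta^1}\) for some \(k\).

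The main obstacle is matching this \(1\)-simplex relation with the equivalence relation generated by elementary homotopies, and this is where the subdivision plays its essential role. The simplicial set \(\mathsf{sd}^k \Delta^1\) is one-dimensional with exactly \(2^k\) non-degenerate edges, glued endwise at \(2^k+1\) vertices. Combining this with Lemma~\ref{lem:5} and the limit description of \(C^{(-)}\) yields an isomorphism
\[
C^{\mathsf{sd}^k \Delta^1} \;\cong\; \underbrace{C\gen{\Delta^1}^\updagger \times_C C\gen{\Delta^1}^\updagger \times_C \cdots \times_C C\gen{\Delta^1}^\updagger}_{2^k \text{ factors}},
\]
with fibre products formed along the evaluations at \(t = 0, 1\). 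A morphism \(A \to C^{\mathsf{sd}^k \Delta^1}\) is therefore exactly a concatenation of \(2^k\) elementary homotopies between \(2^k + 1\) homomorphisms \(A \to C\), and its two outer endpoints recover the face maps of the corresponding \(1\)-simplex in \(Y\); conversely, any finite chain of elementary homotopies (or their reverses, obtained by the flip \(t \mapsto 1-t\)) can be realised after subdividing sufficiently often and padding with constant homotopies. Passing to the filtered colimit over \(k\) shows that the \(1\)-simplex relation in \(Y\) coincides with the equivalence relation generated by elementary homotopies, giving \(\pi_0(Y) = [A,C]\). Combining the three steps yields the natural isomorphism \([A, B^{\mathcal{S}^1}] \cong \pi_1(\HOM^\bullet(A,B))\).
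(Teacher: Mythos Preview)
Your proposal is correct and follows essentially the same approach as the paper, which simply defers to \cite{Cortinas-Thom:Bivariant_K}*{Lemma 3.3.1} with ``obvious modifications''; you have in effect written out that argument in the present bornological setting. One small omission: the paper's proof emphasises that the bijection respects the two group structures (concatenation of loops on \([A,B^{\mathcal{S}^1}]\) versus the usual \(\pi_1\)-product), which you do not verify explicitly, though it follows readily from the naturality of your identifications.
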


\begin{proof}
To see that the two group structures are isomorphic, we make obvious modifications to the argument of \cite{Cortinas-Thom:Bivariant_K}*{Lemma 3.3.1}.
\end{proof}

Similarly, the Hilton-Eckmann argument implies that we can define higher homotopy groups as \([A, B^{\mathcal{S}^n}] \cong \pi_n(\mathsf{Hom}_{\mathsf{Ind}(\mathsf{Alg}_\dvr^\tf)}^\bullet(A, B))\), which are abelian for \(n \geq 2\).

\subsection{Stability}\label{subsec:stability}

Let \(X\) and \(Y\) be torsionfree bornological \(\dvr\)-modules, and let \[\gen{\cdot , \cdot} \colon Y \otimes X \to \dvr\] be a surjective \(\dvr\)-linear map. Such a map is automatically bounded. A pair \((X,Y)\) of torsionfree bornological \(\dvr\)-modules with a choice of surjection as above is called a \textit{matricial pair}. Given a matricial pair, one can define \(\mathcal{M}(X,Y)\) as the \(\dvr\)-module \(X \otimes Y\) with the product \[ (x_1 \otimes y_1) (x_2 \otimes y_2) \defeq \gen{y_1, x_2} x_1 \otimes y_2.\] This is a bounded morphism and automatically turns \(\mathcal{M}(X,Y)\) into a semi-dagger algebra. Its completion \(\mathcal{M}(X,Y)^\updagger = \comb{\mathcal{M}(X,Y)}\) is therefore a dagger algebra by \cite{Meyer-Mukherjee:Bornological_tf}*{Theorem 5.3}.

\begin{remark}
In the case of locally convex \(\C\)-algebras, the above definition recovers several reasonable notions of stability. For instance, if we take \(X = Y = \C^n\), then \(\mathcal{M}(X,Y) \cong \mathbb{M}_n(\C)\). When \(X = Y = \bigoplus_{n \in \N} \C\), we get \(\comb{\mathcal{M}(X,Y)} \cong \mathbb{M}_\infty(\C)\). For \(X = Y = l^2(\N)\), \(\comb{\mathcal{M}(X,Y)} \cong \mathcal{L}(l^2(\N))\) - algebra of trace class operators on the Hilbert space \(l^2(\N)\). Here \(\comb{\mathcal{M}(X,Y)}\) refers to the completed, projective tensor product \(X \hot Y\), with the appropriate extension of the bilinear form \(Y \otimes X \to \C\).
\end{remark}

Recall from \cite{Cortinas-Meyer-Mukherjee:NAHA}*{Section 6} that a \textit{homomorphism} between two matricial pairs \((X,Y)\) and \((W,Z)\) is a pair \(f = (f_1,f_2)\) of bounded linear maps \(f_1 \colon X \to W\) and \(f_2 \colon Y \to Z\) such that \(\gen{f_2(y), f_1(x)} = \gen{y,x}\) for all \(x \in X\) and \(y \in Y\). An \textit{elementary homotopy} is a pair \(H = (H_1,H_2)\) of bounded linear maps \(H_1 \colon X \to W[t]\) and \(H_2 \colon Y \to Z\) or \(H_1 \colon X \to W\) and \(H_2 \colon Y \to Z[t]\) such that 

\[
\begin{tikzcd}
Y \hot X \arrow{r}{H_2 \otimes H_1} \arrow{d}{\gen{\cdot,\cdot}} & Z \otimes W[t] \arrow{d}{\gen{\cdot, \cdot } \otimes \mathrm{id}} \\
\dvr \arrow{r}{\subseteq} & \dvr[t] 
\end{tikzcd}
\] commutes. Homomorphisms and homotopies of matricial pairs induce homomorphisms and dagger homotopies of algebras. We are mainly interested in homomorphisms where \(f_1 = f_2\), and we call the corresponding algebra homomorphisms \textit{standard homomorphisms}. Any pair \((x,y) \in X \times Y\) with \(\gen{y,x} = 1\) induces such a bounded algebra homomorphism \[\iota \colon V \to \mathcal{M}(X,Y)^\updagger, 1 \mapsto x \otimes y.\] Now suppose \(A\) is a complete, torsionfree bornological \(\dvr\)-algebra, there is a canonical map \[\iota_A \colon A \to A \hot \mathcal{M}(X,Y)^\updagger, \iota_A = 1_A \otimes \iota,\] where the target algebra is bornologically torsionfree by \cite{Meyer-Mukherjee:Bornological_tf}*{Proposition 4.12}. Similarly, when \(A\) is an inductive system of complete, bornologically torsionfree algebras, we can define such a canonical map, by applying the map above termwise. 

To simplify notation in the case where \(X = Y\), we denote the corresponding matrix algebra by \(\mathcal{M}_X^\updagger\). Our main cases of interest are the following matrix stabilisations:

\begin{example}\label{ex:triv:matrix}
Let \(X = Y = \bigoplus_{n \in \Lambda}\dvr\). This is the free \(\dvr\)-module on an arbitrary set \(\Lambda\), which we equip with the fine bornology. It has as basis characteristic functions \(\setgiven{\chi_n}{n \in \Lambda}\), using which we can define the bilinear form \(\gen{\chi_n,\chi_m} = \delta_{n,m}\). The corresponding matrix algebra is \(\mathbb{M}_\Lambda\), the algebra of finitely supported \(\Lambda \times \Lambda\)-matrices, which we equip with the fine bornology. We will denote this matrix algebra by \(\mathcal{M}_\Lambda^{\mathrm{alg}}\).
\end{example}

\begin{example}\label{ex:adic-matrix}
Again, let \(\Lambda\) be an arbitrary set. Now take \(X = Y = \coma{\bigoplus_{n \in \Lambda}\dvr} \cong c_0(\Lambda, \dvr)\) - the Banach \(\dvr\)-module of null sequences \(\Lambda \to \dvr\) with the supremum norm.  This is a bornological module with the bornology where every subset is bounded. The bilinear form above extends to one on \(c_0(\Lambda, \dvr) \otimes c_0(\Lambda, \dvr) \to \dvr\).   The corresponding matrix algebra \(\comb{\mathcal{M}(c_0(\Lambda,\dvr), c_0(\Lambda,\dvr))}\) is isomorphic to \(c_0(\Lambda \times \Lambda,\dvr)\) with the convolution product. We will denote this matrix algebra by \(\mathcal{M}_\Lambda^{\mathrm{cont}}\).
\end{example}

\begin{example}
  \label{exa:length-controlled_matrices}
  Let \(l \colon \Lambda \to \N\) be a proper function, that is,
  for each \(n\in\N\) the set of \(x\in\Lambda\) with
  \(l(x) \le n\) is finite.  Define~\(\dvr^{(\Lambda)}\) as in
  Example~\ref{ex:triv:matrix} and give it the bornology that
  is cofinally generated by the \(\dvr\)\nb-submodules
  \[
    S_m \defeq \sum_{\lambda\in\Lambda}
    \dvgen^{\floor{l(\lambda)/m}} \chi_\lambda
  \]
  for \(m\in\N^*\).  The bilinear form in
  Example~\ref{ex:triv:matrix} remains bounded for this
  bornology on~\(\dvr^{(\Lambda)}\).  So
  \(\mathcal{M}(\dvr^{(\Lambda)},\dvr^{(\Lambda)})\) with the tensor
  product bornology from the above bornology is a bornological
  algebra as well.  It is torsion-free and semi-dagger.  So its
  dagger completion is the same as its completion.  We denote it
  by~\(\mathcal{M}_\Lambda^l\).  It is isomorphic to the algebra of
  infinite matrices \((c_{x,y})_{x,y\in\Lambda}\) for which there is
  \(m\in\N^*\) such that
  \(c_{x,y} \in \dvgen^{\floor{(l(x)+l(y))/m}}\) for all
  \(x,y\in\Lambda\); this is the same as asking for
  \(\lim {}\abs*{c_{x,y} \dvgen^{-\floor{(l(x)+l(y))/m}}} = 0\)
  because~\(l\) is proper.  It makes no difference to replace the
  exponent of~\(\dvgen\) by \(\floor{l(x)/m}+\floor{l(y)/m}\)
  or \(\floor{\max \{l(x),l(y)\}/m}\) because we may
  vary~\(m\).
\end{example}

\begin{example}
  \label{exa:filtered_length-controlled_matrices}
  Let~\(\Lambda\) be a set with a filtration by a directed
  set~\(I\).  That is, there are subsets
  \(\Lambda_S\subseteq \Lambda\) for \(S\in I\) with
  \(\Lambda_S \subseteq \Lambda_T\) for \(S\le T\) and
  \(\Lambda = \bigcup_{S\in I} \Lambda_S\).  Let
  \(l\colon \Lambda \to \N\) be a function whose restriction
  to~\(\Lambda_S\) is proper for each \(S\in I\).  For
  \(S\in\Lambda\), form the matrix algebra~\(\mathcal{M}_{\Lambda_S}^l\) as
  in Example~\ref{exa:length-controlled_matrices}.  These algebras
  for \(S\in I\) form an inductive system.  Let
  \(\varinjlim \mathcal{M}_{\Lambda_S}^l\) be its bornological inductive
  limit.  This bornological algebra is also associated to a
  matricial pair, namely, the pair based on
  \(\varinjlim \dvr^{(\Lambda_S)}\), where each
  \(\dvr^{(\Lambda_S)}\) carries the bornology described in
  Example~\ref{exa:length-controlled_matrices}. 
\end{example}



\begin{lemma}\label{lem:matrix-homotopy}
Let \(F_0, F_1 \colon \mathcal{M}_X^\updagger \to \mathcal{M}_Y^\updagger\) be two standard homomorphisms, and let \[\iota_{\mathcal{M}_Y^\updagger} \colon \mathcal{M}_Y^\updagger \to \mathbb{M}_2 \hot \mathcal{M}_Y^\updagger\] be the canonical inclusion. Then \(\iota_{\mathcal{M}_Y^\updagger} \circ F_0\) and \(\iota_{\mathcal{M}_Y^\updagger} \circ F_1\) are dagger homotopic.   
\end{lemma}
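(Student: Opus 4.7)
The plan is to work inside the matricial pair framework of Subsection~\ref{subsec:stability} and construct a zigzag of elementary matricial pair homotopies between \(\iota\circ F_0\) and \(\iota\circ F_1\); since such homotopies induce dagger homotopies of algebras, this suffices. Identify the target \(\mathbb{M}_2\hot\mathcal{M}_Y^\updagger\) canonically with \(\mathcal{M}(\dvr^2\hot Y,\dvr^2\hot Y)^\updagger\); then the inclusion \(\iota\) is the standard homomorphism induced by \(y\mapsto (y,0)\colon Y\to\dvr^2\hot Y\), and each composite \(\iota\circ F_i\) is the standard matricial pair homomorphism induced by \(x\mapsto (f_i(x),0)\colon X\to\dvr^2\hot Y\), where \(f_i\colon X\to Y\) is the map underlying the standard homomorphism \(F_i\) and satisfies \(\langle f_i(y),f_i(x)\rangle=\langle y,x\rangle\).

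To pass from \(\iota\circ F_0\) to \(\iota\circ F_1\) I route through the lower-right corner embedding \(e_{22}\otimes F_1\), i.e., the matricial pair homomorphism \(((0,f_1),(0,f_1))\). A chain of three elementary homotopies connects \(((f_0,0),(f_0,0))\) to \(((0,f_1),(0,f_1))\). First, keep \(H_1=(f_0,0)\) fixed and vary \(H_2(y,t)=(f_0(y),t f_1(y))\), reaching \(((f_0,0),(f_0,f_1))\); then, with \(H_2=(f_0,f_1)\) fixed, vary \(H_1(x,t)=((1-t)f_0(x),t f_1(x))\), reaching \(((0,f_1),(f_0,f_1))\); finally, with \(H_1=(0,f_1)\) fixed, vary \(H_2(y,t)=((1-t)f_0(y),f_1(y))\) to arrive at \(((0,f_1),(0,f_1))\). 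In each stage the inner product identity \(\langle H_2(y),H_1(x)\rangle=\langle y,x\rangle\) holds in~\(\dvr\) with no residual \(t\)-dependence, using only the isometry relations \(\langle f_i(y),f_i(x)\rangle=\langle y,x\rangle\) and the elementary equality \((1-t)+t=1\).

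Running this same three-step chain but with \(f_0\) replaced by \(f_1\) throughout connects \(\iota\circ F_1=((f_1,0),(f_1,0))\) to the same endpoint \(e_{22}\otimes F_1=((0,f_1),(0,f_1))\). Concatenating the first chain with the reverse of the second produces a zigzag of elementary matricial pair homotopies from \(\iota\circ F_0\) to \(\iota\circ F_1\), and the induced dagger algebra homotopies assemble into the desired one.

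The only real ``obstacle'' is bookkeeping: one must verify the six inner product identities in the two chains, which is immediate from what was just noted. The conceptual content is that the second coordinate of \(\dvr^2\) supplies the room to swap the image of the first coordinate over to the second, which is exactly the flexibility that \(\mathbb{M}_2\)-stabilisation affords.
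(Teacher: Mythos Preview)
Your proof is correct and follows the same underlying idea as the paper's: identify \(\mathbb{M}_2\hot\mathcal{M}_Y^\updagger\) with \(\mathcal{M}_{Y\hot\dvr^2}^\updagger\) and use the second coordinate of \(\dvr^2\) to route \(\iota\circ F_0\) through the lower-right corner embedding \(e_{22}\otimes F_1\) and back to \(\iota\circ F_1\), via linear homotopies on the matricial-pair level. The paper writes this as two ``linear homotopies'' (one from \(F_0\otimes\delta_{e_1}\) to \(F_1\otimes\delta_{e_2}\), another from \(F_1\otimes\delta_{e_1}\) to \(F_1\otimes\delta_{e_2}\)) without spelling out the fixed second component required by the definition of an elementary matricial-pair homotopy; your three-step decomposition does exactly that, pinning down \(H_2\) at each stage so that the pairing identity \(\langle H_2(y),H_1(x)\rangle=\langle y,x\rangle\) holds on the nose. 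In effect you have unpacked the paper's sketch into genuine elementary homotopies in the sense of the definition just above the lemma, which is a net gain in rigor at no conceptual cost.
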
 

\begin{proof}
We first observe that \(\mathbb{M}_2 \hot \mathcal{M}_Y^\updagger \cong \mathcal{M}_{Y \hot \dvr^2}^\updagger\). Picking an orthonormal basis \(\delta_{e_1}\) and \(\delta_{e_2}\) relative to the bilinear form in Example \ref{ex:triv:matrix}, we can define a linear homotopy \(x \mapsto (1-t) F_0(x) \otimes \delta_{e_1} + t F_1(x) \otimes \delta_{e_2}\) between \(F_0 \otimes \delta_{e_1}\) and \(F_1 \otimes \delta_{e_2}\). Similarly, we can define a linear homotopy between \(F_1 \otimes \delta_{e_1} \) and \(F_1 \otimes \delta_{e_2}\). Concatenating them, we get a homotopy \(X \to Y \hot \dvr^2\) between the homomorphisms \(F_0 \otimes \delta_{e_1}\) and \(F_1 \otimes \delta_{e_1}\). This induces the required dagger homotopy between the algebra homomorphisms \(\iota_{\mathcal{M}_Y^\updagger} \circ F_0\) and \(\iota_{\mathcal{M}_Y^\updagger} \circ F_1\). 
\end{proof}

Now let \(Z\) be a torsionfree bornological \(\dvr\)-module with a nondegenerate, symmetric bilinear form \(\gen{\cdot,\cdot}_Z \to \dvr\), and \(\mathcal{M}_Z^\updagger\) its associated matrix algebra. Let \(\Gamma^Z \subseteq \mathsf{End}_\dvr(\mathcal{M}_Z^\updagger)\) be the multiplier algebra of \(\mathcal{M}_Z^\updagger\). Explicitly, this consists of pairs \((l,r)\) of right and left module maps \(\mathcal{M}_Z^\updagger \to \mathcal{M}_Z^\updagger\) such that \(x \cdot l(y) = r(x) \cdot y\) for \(x\), \(y \in \mathcal{M}_Z^\updagger\). We equip this with the equibounded bornology induced by \(\mathsf{Hom}_\dvr(\mathcal{M}_Z^\updagger, \mathcal{M}_Z^\updagger)\). Now suppose \(f_1\) and \(f_2\) are homomorphisms of the matricial pair corresponding to \(Z\), and satisfy \(\gen{f_1(x),y}_Z = \gen{x,f_2(y)}_Z\); we then call the pair \(f = (f_1,f_2)\) an \textit{adjoint pair}. Such a pair yields a multiplier via \[f_1 \cdot (x \otimes y) \defeq f_1(x) \otimes y, \quad (x \otimes y) \cdot f_2 \defeq x \otimes f_2(y).\] When we additionally have \(f_2  f_1 = 1_Z\), we call the pair an \textit{adjointable isometry}. Such a pair induces a bounded homomorphism \[\mathrm{Ad}_f \colon \mathcal{M}_Z^\updagger \to \mathcal{M}_Z^\updagger , T \mapsto f_1 \cdot T \cdot f_2.\] Note that since we assume the bilinear form \(\gen{\cdot,\cdot}_Z\) to be non-degenerate, the map \(f_2\) in an adjoint pair is unique if it exists. The symmetry assumption on the bilinear form implies that \(f_2^* = f_1\).   

Now consider the inductive system \[\mathcal{M}_{\infty,Z}^\updagger \defeq (\mathbb{M}_n)_{n \in \N} \otimes \mathcal{M}_Z^\updagger \] of complete bornological \(\dvr\)-algebras, where we equip the algebras \(\mathbb{M}_n\) with the fine bornology. At each level \(n\), we have \(\mathbb{M}_n \otimes \mathcal{M}_Z^\updagger \cong \mathcal{M}_{Z \otimes \dvr^n}^\updagger\). Tensoring with the identity on \((\mathbb{M}_n)_{n \in \N}\), we get an induced endomorphism \(1_{(\mathbb{M}_n)_n} \otimes \mathrm{Ad}_f \colon \mathcal{M}_{\infty, Z}^\updagger \to \mathcal{M}_{\infty, Z}^\updagger\). 

\begin{lemma}\label{lem:important-matrix}
Let \(f = (f_1,f_2)\) be an adjointable isometry on a torsionfree bornological \(\dvr\)-module \(Z\). Then \(\iota_{\mathcal{M}_Z^\updagger} \circ \mathrm{Ad}_f\) is dagger homotopic to \(\iota_{\mathcal{M}_Z^\updagger}\), and \(1_{(\mathbb{M}_n)_{n \in \N}} \otimes \mathrm{Ad}_f\) is dagger homotopic to the identity on \(\mathcal{M}_{\infty,Z}^\updagger\).  
\end{lemma}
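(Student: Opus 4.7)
The plan is to prove the first assertion by constructing an explicit polynomial homotopy at the level of matricial pair homomorphisms, and then derive the second assertion by tensoring with $\mathrm{id}_{\mathbb{M}_n}$ at each level and passing to the inductive limit.

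Both maps in the first assertion arise from matricial pair homomorphisms $(Z, Z) \to (Z \oplus Z, Z \oplus Z)$: the map $\iota_{\mathcal{M}_Z^\updagger}$ comes from the diagonal pair $(h, h)$ with $h(x) = (x, 0)$, while $\iota_{\mathcal{M}_Z^\updagger} \circ \mathrm{Ad}_f$ comes from $(\iota_1 \circ f_1, \iota_1 \circ f_2)$, where $\iota_1$ denotes inclusion as the first summand. My candidate interpolating family is
\[
G_1(t)(x) = \bigl((1-t)f_1(x) + t x,\, 2(1-t) x\bigr), \qquad G_2(t)(y) = \bigl((1-t)f_2(y) + t y,\, t(y - f_2 y)\bigr).
\]
Expanding $\gen{G_2(t)(y), G_1(t)(x)}_{Z \oplus Z}$ using both the matricial pair identity $\gen{f_2 y, f_1 x} = \gen{y, x}$ (the same condition that makes $\mathrm{Ad}_f$ a homomorphism) and the adjoint relation $\gen{y, f_1 x} = \gen{f_2 y, x}$, the first-coordinate pairing contributes $\bigl((1-t)^2 + t^2\bigr) \gen{y, x} + 2t(1-t)\gen{f_2 y, x}$ and the second contributes $2t(1-t)\bigl(\gen{y, x} - \gen{f_2 y, x}\bigr)$; these telescope to $((1-t) + t)^2 \gen{y, x} = \gen{y, x}$ for every $t$. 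So $(G_1, G_2)$ is a matricial pair homomorphism throughout and induces an algebra homomorphism $\mathcal{M}_Z^\updagger \to \mathcal{M}_{Z \oplus Z}^\updagger \hot \dvr[t]^\updagger$.

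The endpoints $G(0)$ and $G(1)$ carry residual second-coordinate terms $2x$ and $y - f_2 y$ respectively, so they do not coincide with $(\iota_1 f_1, \iota_1 f_2)$ and $(h, h)$ on the nose. I would close the gap with two short linear homotopies, $H^{(0)}_1(s)(x) = (f_1 x, 2(1-s) x),\ H^{(0)}_2(s)(y) = (f_2 y, 0)$ and $H^{(1)}_1(s)(x) = (x, 0),\ H^{(1)}_2(s)(y) = (y, (1-s)(y - f_2 y))$, each of which is a matricial pair homomorphism for all $s$ because the residual second-coordinate entry pairs against zero on the other side. Concatenating the three pieces yields the desired dagger homotopy between $\iota_{\mathcal{M}_Z^\updagger} \circ \mathrm{Ad}_f$ and $\iota_{\mathcal{M}_Z^\updagger}$.

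For the second assertion, applying $\mathrm{id}_{\mathbb{M}_n} \otimes (-)$ to the above at each level of $\mathcal{M}_{\infty, Z}^\updagger = (\mathbb{M}_n \hot \mathcal{M}_Z^\updagger)_{n \in \N}$, whose structure maps are the corner embeddings, gives at level $n$ a dagger homotopy between $(1_{\mathbb{M}_n} \otimes \iota) \circ (1_{\mathbb{M}_n} \otimes \mathrm{Ad}_f)$ and the structure map $1_{\mathbb{M}_n} \otimes \iota \colon \mathbb{M}_n \hot \mathcal{M}_Z^\updagger \to \mathbb{M}_{2n} \hot \mathcal{M}_Z^\updagger$, which is exactly the statement that $1 \otimes \mathrm{Ad}_f$ and the identity are dagger-homotopic as endomorphisms of the ind-algebra. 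The main obstacle is the polynomial cancellation in the first part: the cross terms from linear interpolation produce an obstruction $2t(1-t)(\gen{f_2 y, x} - \gen{y, x})$ that must be matched by a rank-one pairing in the second coordinate, which forces the specific asymmetric choice above and relies on both the isometry relation $f_2 f_1 = \mathrm{id}$ and the adjoint relation simultaneously. After the explicit formula is verified, the endpoint corrections and the inductive-limit argument are essentially bookkeeping.
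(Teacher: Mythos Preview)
Your first assertion is proved correctly, but by a longer route than the paper's. The paper simply observes that $\mathrm{Ad}_f$ and the identity are both algebra homomorphisms induced by matricial pair homomorphisms and then invokes the preceding matrix-homotopy lemma, whose proof uses only a linear interpolation in \emph{one} component with the other held constant, and never touches the adjoint relation. Your three-piece construction works, but the cross-term cancellation you engineer needs both the matricial pair identity and the adjoint identity simultaneously; this buys you nothing extra and makes the verification more delicate than necessary.

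For the second assertion there is a real gap. You assert that $1_{\mathbb{M}_n} \otimes \iota \colon \mathbb{M}_n \hot \mathcal{M}_Z^\updagger \to \mathbb{M}_{2n} \hot \mathcal{M}_Z^\updagger$ is the structure map of the inductive system, so that the level-wise homotopies immediately give an ind-homotopy between $1 \otimes \mathrm{Ad}_f$ and the identity. But an identification $\mathbb{M}_n \otimes \mathbb{M}_2 \cong \mathbb{M}_{2n}$ making $a \mapsto a \otimes e_{11}$ the upper-left corner is \emph{not} compatible with the corner-embedding structure maps as $n$ varies: the structure map $\mathbb{M}_{2n} \hookrightarrow \mathbb{M}_{2n'}$ does not agree with $(\mathrm{corner}_{n \to n'}) \otimes 1_{\mathbb{M}_2}$ under these identifications, so your homotopies $H_n$ do not assemble into a morphism of ind-systems. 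What is missing is precisely the cancellation step the paper isolates: if $\iota \circ f \sim \iota \circ g$ for maps into $(\mathbb{M}_n)_{n} \otimes B$, then $f \sim g$. The paper defers this to the argument of \cite{Cortinas-Thom:Bivariant_K}*{Lemma~4.1.1}, which supplies the additional rotation homotopy (a path of invertibles conjugating one corner embedding into the other) that your ``bookkeeping'' step has not provided.
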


\begin{proof}
The map \(\mathrm{Ad}_f\) is the standard homomorphism corresponding to the pair \((f_1,f_2)\). So by Lemma \ref{lem:matrix-homotopy}, \(\iota_{\mathcal{M}_Z^\updagger} \circ \mathrm{Ad}_f \) is dagger homotopic to \(\iota_{\mathcal{M}_Z^\updagger}\). Consequently, \(\iota_{\mathcal{M}_{\infty,Z}^\updagger} \circ  (1_{(\mathbb{M}_n)_{n \in \N}} \otimes \mathrm{Ad}_f) \colon \mathcal{M}_{\infty, Z}^\updagger \to \mathbb{M}_2 \otimes \mathcal{M}_{\infty,Z}^\updagger\) is ind-homotopic to \(\iota_{\mathcal{M}_Z^\updagger}\). Iteratively, for each \(n\), we obtain maps \[\iota_{\mathcal{M}_{\infty, Z}^\updagger}\circ (1_{(\mathbb{M}_n)_{n \in \N}} \otimes \mathrm{Ad}_f), \iota_{\mathcal{M}_{\infty, Z}^\updagger} \colon \mathcal{M}_{\infty,Z}^\updagger \rightrightarrows \mathbb{M}_{2n} \otimes \mathcal{M}_{\infty,Z}^\updagger.\]
 So it suffices to show that if \(f,g \colon A \to (\mathbb{M}_n)_{n \in \N} \otimes B\) are homomorphisms of inductive systems of complete, torsionfree bornological algebras such that \(\iota \circ f \sim \iota \circ g\), then \(f \sim g\). This follows from the same argument as in \cite{Cortinas-Thom:Bivariant_K}*{Lemma 4.1.1}. \qedhere  
\end{proof}

To see the relevance of Lemma \ref{lem:important-matrix}, consider a torsionfree bornological \(\dvr\)-module \(Z\) with a bilinear form that satisfies \(Z \oplus Z \cong Z\) and \(Z \hot Z \cong Z\), and the isomorphisms preserve the bilinear forms. This happens whenever the underlying set \(\Lambda\) in Examples \ref{ex:triv:matrix}, \ref{ex:adic-matrix}, \ref{exa:filtered_length-controlled_matrices} is infinite for any choice of set-theoretic bijection \(\Lambda \times \Lambda \cong \Lambda\). In the complex case, this happens for any separable Hilbert space. We refer to such bornological \(\dvr\)-modules \(Z\)  as \textit{product stable} bornological \(\dvr\)-modules.  We then define the \textit{direct sum} \(\oplus \colon \mathcal{M}_Z^\updagger \hot \mathcal{M}_Z^\updagger \to \mathbb{M}_2(\mathcal{M}_Z^\updagger) \cong \mathcal{M}_Z^\updagger\) and the \textit{tensor product} \(\mathcal{M}_Z^\updagger \otimes \mathcal{M}_Z^\updagger \to \mathcal{M}_{Z \hot Z}^\updagger \cong \mathcal{M}_Z^\updagger\) operations on the algebra \(\mathcal{M}_Z^\updagger\). These definitions extend to the ind-algebra \(\mathcal{M}_{\infty, Z}^\updagger\). By Lemma \ref{lem:important-matrix}, these operations are associative and commutative  up to homotopy, and the tensor product distributes over the direct sum. Consequently, we get a homotopy semi-ring \((\mathcal{M}_{\infty,Z}^\updagger , \oplus, \otimes)\).

For two inductive systems of complete, bornologically torsionfree algebras \(A\) and \(B\), we define 
\[\{A,B\} \defeq [A, \mathcal{M}_{\infty,Z}^\updagger\hot B],\] where \(Z\) is a product stable bornological \(\dvr\)-module (with a choice of bilinear map). To shorten notation, we will often denote \(\mathcal{M}_{\infty, Z}^\updagger(B) \defeq \mathcal{M}_{\infty, Z}^\updagger \hot B\). For each such choice of matrix stabilisation, we can define a category \(\mathsf{Alg}_{\mathcal{M}_{\infty, Z}^\updagger}^\tf\) whose objects are inductive systems of complete, torsionfree bornological algebras, and whose morphisms are \(\Hom_{\mathsf{Alg}_{\mathcal{M}_{\infty, Z}^\updagger}^\tf}(A,B) \defeq \{A,B\}.\) That this really is a category is shown in the following:

\begin{lemma}\label{lem:composition-matrix}
For three algebras \(A\), \(B\), \(C \in \mathsf{Ind}(\Alg_\dvr^\tf)\), we have a well-defined associative composition rule \(\{B,C\} \times \{A,B\} \to \{A,C\}\). The identity is given by the homotopy class of the map \(\iota_A\) above.  
\end{lemma}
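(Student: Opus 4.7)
The proposal is to define composition via the tensor-product operation on the homotopy semi-ring $(\mathcal{M}_{\infty,Z}^\updagger, \oplus, \otimes)$ guaranteed by Lemma \ref{lem:important-matrix}, and then verify well-definedness, associativity, and unitality up to homotopy. Concretely, given bounded algebra homomorphisms (or morphisms of ind-systems) $f \colon A \to \mathcal{M}_{\infty,Z}^\updagger(B)$ and $g \colon B \to \mathcal{M}_{\infty,Z}^\updagger(C)$, the composite $g \circ f$ should be defined as
\[
A \xrightarrow{f} \mathcal{M}_{\infty,Z}^\updagger \hot B \xrightarrow{1 \hot g} \mathcal{M}_{\infty,Z}^\updagger \hot \mathcal{M}_{\infty,Z}^\updagger \hot C \xrightarrow{\mu \hot 1} \mathcal{M}_{\infty,Z}^\updagger \hot C,
\]
where $\mu$ is the multiplication map $\mathcal{M}_{\infty,Z}^\updagger \hot \mathcal{M}_{\infty,Z}^\updagger \to \mathcal{M}_{\infty,Z}^\updagger$ coming from a choice of isomorphism $Z \hot Z \cong Z$ (which exists since $Z$ is product stable). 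The middle arrow uses that $\hot$ preserves bornological embeddings on torsionfree algebras (Lemma \ref{lem:tensor-exact}) so that tensoring with $\mathcal{M}_{\infty,Z}^\updagger$ is a well-defined functor on $\mathsf{Ind}(\Alg_\dvr^\tf)$.

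First I would check well-definedness at the level of homotopy classes. If $f \sim f'$ via an elementary homotopy $A \to \mathcal{M}_{\infty,Z}^\updagger(B)\gen{\Delta^1}^\updagger$, then tensoring with $\mathcal{M}_{\infty,Z}^\updagger$ and applying $\mu$ produces an elementary homotopy between $g \circ f$ and $g \circ f'$; similarly for $g \sim g'$, since $1 \hot H$ gives the required homotopy. Two different choices of isomorphism $Z \hot Z \cong Z$ differ by an adjointable isometry, so Lemma \ref{lem:important-matrix} shows the resulting products $\mu$, $\mu'$ induce dagger homotopic maps after applying $\iota$, hence the composition class is independent of this choice.

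Next, associativity follows because the multiplication $\mu$ on $\mathcal{M}_{\infty,Z}^\updagger$ is associative up to dagger homotopy: both $(h \circ g) \circ f$ and $h \circ (g \circ f)$ are obtained by composing $f$, $1 \hot g$, $1 \hot 1 \hot h$ and then collapsing $\mathcal{M}_{\infty,Z}^\updagger^{\hot 3}$ to $\mathcal{M}_{\infty,Z}^\updagger$ in two ways, which differ by the associator of the tensor product operation. By Lemma \ref{lem:important-matrix} the two collapses are dagger homotopic, and tensoring a homotopy with the identity on $C$ (which is exact for torsionfree algebras by Lemma \ref{lem:tensor-exact}) gives the required homotopy of compositions.

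Finally, for the unit, I would compute $f \circ \iota_A$ and $\iota_B \circ f$. Both reduce, after applying $\mu$, to maps $A \to \mathcal{M}_{\infty,Z}^\updagger \hot B$ whose difference from $f$ is the result of pre- or post-composing with a corner embedding $\mathcal{M}_{\infty,Z}^\updagger \to \mathcal{M}_{\infty,Z}^\updagger \hot \mathcal{M}_{\infty,Z}^\updagger \xrightarrow{\mu} \mathcal{M}_{\infty,Z}^\updagger$, which is again a standard homomorphism of $\mathcal{M}_{\infty,Z}^\updagger$. Lemma \ref{lem:important-matrix} shows this corner embedding is dagger homotopic to the identity on $\mathcal{M}_{\infty,Z}^\updagger$, so the composites agree with $f$ in $[A, \mathcal{M}_{\infty,Z}^\updagger(B)]$. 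I expect the main obstacle to be bookkeeping: one has to carefully track that the various ``tensor with $1$'' constructions remain bounded algebra homomorphisms of ind-systems and that the homotopies from Lemma \ref{lem:important-matrix}, which a priori land in a larger matrix algebra, can be pulled back along the product-stability isomorphism $Z \hot Z \cong Z$ without disturbing the homotopy class; this is exactly where product stability of $Z$ is essential.
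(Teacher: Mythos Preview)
Your proposal is correct and takes essentially the same approach as the paper: the paper defines the composite exactly as you do, via $f$, then $1 \hot g$, then the tensor-product multiplication $m \hot 1$ on $\mathcal{M}_{\infty,Z}^\updagger$. The paper's proof is actually terser than yours---it simply records this formula and leaves well-definedness, associativity, and unitality implicit---so your outline of how Lemma~\ref{lem:important-matrix} handles the choice of $\mu$, associativity, and the unit is a faithful (and more detailed) expansion of what the paper intends.
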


\begin{proof}
Consider representatives of homotopy classes of maps \(f \colon A \to \mathcal{M}_{\infty, Z}^\updagger(B)\) and \(g \colon B \to \mathcal{M}_{\infty, Z}^\updagger(C)\) in \(\{A,B\}\) and \(\{B,C\}\). Let \(m \colon \mathcal{M}_{\infty, Z}^\updagger \hot \mathcal{M}_{\infty, Z}^\updagger \to \mathcal{M}_{\infty, Z}^\updagger\) be the tensor product of matrices. Then the composition \[A \overset{f}\to \mathcal{M}_{\infty,Z}^\updagger(B) \overset{1 \otimes g}\to \mathcal{M}_{\infty,Z}^\updagger \hot \mathcal{M}_{\infty,Z}^\updagger(C) \overset{m \otimes 1}\to \mathcal{M}_{\infty,Z}^\updagger \hot C,\] represents the composition \([g] \star [f]\) in \(\{A,C\}\). 
\end{proof}

We say that two algebras are \textit{matrix homotopy equivalent} if they are isomorphic in the category \(\mathsf{Alg}_{\mathcal{M}_{\infty, Z}^\updagger}^\tf\).  An algebra \(A\) is \textit{matricially stable} if it is isomorphic to \(\mathcal{M}_{\infty, Z}^\updagger(A)\). There is a canonical map \([A,B] \to \{A,B\}\), which is an isomorphism if \(B\) is matrically stable. 

We end this section by showing that an \(\mathbb{M}_2\)-stable functor acts trivially on inner endomorphisms, a well-known result in the archimedean setting (see \cite{Cuntz-Meyer-Rosenberg}*{Section 3.1.2}). Recall that a functor \(F \colon \Alg_\dvr^\tf \to \mathcal{A}\) into an abelian category is \textit{\(\mathbb{M}_2\)-stable} if the canonical map \(A \to \mathbb{M}_2(A)\) induces an isomorphism \(F(A) \to F(\mathbb{M}_2(A))\). 

\begin{proposition}\label{prop:M2-inner-endo}
Let \(F\) be an \(\mathbb{M}_2\)-stable functor, \(B \in \Alg_\dvr^\tf\) and \(A \subseteq B\) a bornological subalgebra. Suppose there are elements \(x\), \(y \in B\) such that \[y A, x A \subseteq A, \quad a xy b = ab, \quad [\dvr, x] = [\dvr, y] = 0\] for \(a\), \(b \in A\). Then \(\mathsf{Ad}(x,y) \colon A \to A\), \(a \mapsto y a x\) is a bounded \(\dvr\)-algebra homomorphism, and \(F(\mathsf{Ad}(x,y)) = \mathrm{id}_{F(A)}\). 
\end{proposition}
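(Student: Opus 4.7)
The plan is to first verify that \(\mathsf{Ad}(x,y)\) is a bounded \(\dvr\)-algebra homomorphism, and then to invoke the classical Cuntz--Karoubi argument for \(\mathbb{M}_2\)-stable functors to conclude \(F(\mathsf{Ad}(x,y)) = \id_{F(A)}\); the argument will be a direct adaptation of \cite{Cuntz-Meyer-Rosenberg}*{\S3.1} (in the archimedean setting) or \cite{Cortinas-Thom:Bivariant_K} (algebraic setting).

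For well-definedness and the homomorphism property, the absorption hypotheses \(yA, xA \subseteq A\) place \(yax\) in \(A\), and the identity \(axyb = ab\) yields the multiplicativity
\[
\mathsf{Ad}(x,y)(a) \cdot \mathsf{Ad}(x,y)(b) = (yax)(ybx) = y(axyb)x = yabx = \mathsf{Ad}(x,y)(ab).
\]
The hypothesis \([\dvr, x] = [\dvr, y] = 0\) ensures \(\dvr\)-linearity, and boundedness follows from boundedness of multiplication in \(B\).

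The main construction is the algebra homomorphism
\[
\phi \colon A \to \mathbb{M}_2(A), \qquad \phi(a) = \begin{pmatrix} a & 0 \\ 0 & yax \end{pmatrix},
\]
whose multiplicativity again uses \(axyb = ab\) on the lower-right entry. Let \(\iota_1, \iota_2 \colon A \to \mathbb{M}_2(A)\) be the upper-left and lower-right corner embeddings. By \(\mathbb{M}_2\)-stability, \(F(\iota_1)\) and \(F(\iota_2)\) are both isomorphisms \(F(A) \cong F(\mathbb{M}_2(A))\), and iterating the stability isomorphism into \(\mathbb{M}_4(A)\) identifies them. The homomorphism \(\phi\) is the ``orthogonal sum'' \(\iota_1 + \iota_2 \circ \mathsf{Ad}(x,y)\); its image on \(F\) is compared to that of the diagonal embedding \(\delta(a) = \mathrm{diag}(a,a) = \iota_1 + \iota_2\) by means of the swap inner automorphism of \(\mathbb{M}_2(A)\), which is trivialised on \(F\) by a further stability step in \(\mathbb{M}_4(A)\). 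Combining these with the standard additivity of \(\mathbb{M}_2\)-stable functors on orthogonal homomorphism pairs, one extracts \(F(\iota_1)\circ F(\mathsf{Ad}(x,y)) = F(\iota_1)\), and the conclusion \(F(\mathsf{Ad}(x,y)) = \id_{F(A)}\) follows upon cancelling the isomorphism \(F(\iota_1)\).

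The main obstacle is marshalling the correct ingredients: additivity of \(F\) on orthogonal homomorphisms and the triviality of the swap inner automorphism of \(\mathbb{M}_2\), both of which in the \(\mathbb{M}_2\)-stable setting follow via a further application of stability into \(\mathbb{M}_4(A)\). These extend from the purely algebraic framework of Cortinas--Thom to the bornological \(\dvr\)-algebra context without essential change, since the matrix algebras \(\mathbb{M}_n\) with the fine bornology are complete and torsionfree objects of \(\Alg_V^\tf\) and the corner inclusions \(A \to \mathbb{M}_n(A)\) are automatically bounded.
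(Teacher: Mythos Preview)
Your verification that $\mathsf{Ad}(x,y)$ is a bounded $\dvr$-algebra homomorphism is correct.

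The main argument, however, has a gap at the decisive step. You construct $\phi(a) = \mathrm{diag}(a, yax)$ and the diagonal $\delta(a) = \mathrm{diag}(a,a)$ and want to compare $F(\phi)$ with $F(\delta)$ so as to extract $F(\mathsf{Ad}(x,y)) = \mathrm{id}$. But the swap automorphism of $\mathbb{M}_2(A)$ relates $\phi$ only to $a \mapsto \mathrm{diag}(yax, a)$, not to $\delta$; combined with additivity and $F(\iota_1)=F(\iota_2)$ this yields the tautology $F(\iota_1)\bigl(\mathrm{id} + F(\mathsf{Ad}(x,y))\bigr) = F(\iota_1)\bigl(F(\mathsf{Ad}(x,y)) + \mathrm{id}\bigr)$. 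The two ingredients you name---additivity on orthogonal pairs and triviality of the swap---are genuine properties of $\mathbb{M}_2$-stable functors, but they do not by themselves produce $F(\phi) = F(\delta)$; something connecting $\phi$ to $\delta$ is still missing.

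The paper's argument supplies exactly that missing link, and does so without invoking additivity at all. Rather than comparing two maps $A \to \mathbb{M}_2(A)$, it works with the single endomorphism $\mathsf{Ad}(x \oplus \mathrm{id}, y \oplus \mathrm{id})$ of $\mathbb{M}_2(A)$ and records two identities: it fixes $\iota_2$, and it intertwines $\iota_1$ with $\iota_1 \circ \mathsf{Ad}(x,y)$. The first identity, together with the invertibility of $F(\iota_2)$, forces $F\bigl(\mathsf{Ad}(x \oplus \mathrm{id}, y \oplus \mathrm{id})\bigr) = \mathrm{id}_{F(\mathbb{M}_2(A))}$; the second then gives $F(\iota_1) = F(\iota_1)\circ F(\mathsf{Ad}(x,y))$, and cancelling the isomorphism $F(\iota_1)$ finishes. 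In your language this is the observation that $\phi = \mathsf{Ad}(\mathrm{id} \oplus x, \mathrm{id} \oplus y) \circ \delta$, and the paper's trick is precisely the mechanism showing that this amplified inner endomorphism is trivial on $F$. The swap enters the paper's proof only at the preliminary step of showing $F(\iota_2)$ is invertible, via $\sigma \circ \iota_2 = \iota_1$ with $\sigma$ an automorphism.
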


\begin{proof}
Let \(\iota_1\) and \(\iota_2 \colon A \to \mathbb{M}_2(A)\) denote the two corner embeddings into the upper left and lower right corners. Then \(F(\iota_1)\) is an isomorphism by assumption. Furthermore, conjugation by the matrix \(\begin{pmatrix}
0 & 1\\
-1 & 0
\end{pmatrix}\) defines an inner automorphism \(\sigma \colon \mathbb{M}_2(A) \to \mathbb{M}_2 (A)\) such that \(\sigma \circ \iota_2 = \iota_1\). Consequently, \(F(\iota_2)\) is invertible as well. Now consider the map \(\mathsf{Ad}(x \oplus \mathrm{id}, y \oplus \mathrm{id}) \colon \mathbb{M}_2(A) \to \mathbb{M}_2(A)\); it satisfies \(\mathsf{Ad}(x \oplus \mathrm{id}, y \oplus \mathrm{id}) \circ \iota_1 = \iota_1 \circ \mathsf{Ad}(x, y)\) and \(\mathsf{Ad}(x \oplus \mathrm{id}, y \oplus \mathrm{id}) \circ \iota_2 = \iota_2\). Since \(F(\iota_2)\) is invertible, the second identity says that \(F(\mathsf{Ad}(x \oplus \mathrm{id}, y \oplus \mathrm{id})) = \mathrm{id}_{F(\mathbb{M}_2(A))}\). Since \(F(\iota_1)\) is invertible, the first equality says that \(F(\mathsf{Ad}(x,y))\) is the identity on \(A\).    
\end{proof}

\subsection{Extensions of complete bornological algebras}

Let \(K \overset{f}\into E \overset{g}\onto Q\) be an extension of inductive systems of complete, bornologically torsionfree \(\dvr\)-algebras. This can be represented by a diagram \((f_\alpha \colon K_\alpha \into E_\alpha)_{\alpha}\) and \((g_\alpha \colon E_\alpha \onto Q_\alpha)_{\alpha}\) of bounded \(\dvr\)-algebra homomorphisms, where \(f_\alpha = \ker(g_\alpha)\) and \(g_\alpha = \coker(f_\alpha)\). An extension as above is called \textit{semi-split} if \(g\) has a bounded \(\dvr\)-linear section.

We can now define several canonical extensions as in \cite{Cortinas-Thom:Bivariant_K}. 

\begin{example}[Path extension]\label{ex:path-extension}
Let \(\Omega \defeq \dvr^{(S^1, *)} \cong \setgiven{\sum_{n \in \N} f \in \dvr[t]^\updagger}{f(0) = f(1) = 0} \cong t(t-1) \dvr[t]^\updagger\). By definition, this is part of an extension of complete bornological algebras 
\[ \Omega \into \dvr[t]^\updagger \overset{(\ev_0,\ev_1)}\onto \dvr \oplus \dvr.\] By Lemma \ref{lem:4}, we can tensor with a complete, bornologically torsionfree algebra \(A\) and get an extension \(\Omega (A) \into A\gen{\Delta^1}^\updagger \overset{(\ev_0, \ev_1)}\onto A \oplus A\), which we call the \textit{path extension} of \(A\). Here \(\Omega(A) = \Omega \hot A \cong t(t-1) A\gen{\Delta^1}^\updagger\). This is split by the bounded \(\dvr\)-linear section defined by \(A \oplus A \to A\gen{\Delta^1}^\updagger\), \((a_1, a_2) \mapsto (1-t)a_1 + t a_2\).  
\end{example}

Next, we come to the \textit{universal extension}. Let \(F \colon \Alg_\dvr^\tf \to \mathsf{CBorn}_\dvr^\tf\) be the canonical forgetful functor that forgets the algebra structure. This has a left adjoint given by the \textit{tensor algebra} of a complete, bornologically torsionfree \(\dvr\)-module \(\tilde{T}(M) \defeq \bigoplus_{n \in \N} M^{\hot n}\), whose multiplication is given by concatenation of pure tensors. The tensor algebra is complete and bornologically torsionfree because  \(M\) is so; this uses \cite{Meyer-Mukherjee:Bornological_tf}*{Theorem 4.6 and Proposition 4.12} and that completions and torsionfreeness are hereditary for direct sums. By termwise application, these functors extend to inductive systems of complete bornologically torsionfree \(\dvr\)-algebras and modules. We still denote them by \(\tilde{T}\) and \(F\), and their composition \(\tens  = \tilde{T} \circ F \colon \mathsf{Ind}(\Alg_\dvr^\tf) \to \mathsf{Ind}(\Alg_\dvr^\tf)\).  The free-forgetful adjuction applied to the identity on an algebra \(A \in \mathsf{Alg}_\dvr^\tf\) yields a semi-split extension \[\jens (A) \into \tens (A) \overset{\eta_A}\onto A\] of complete, torsionfree bornological algebras, where \(\jens(A) \defeq \ker(\eta_A)\). The \(\dvr\)-linear splitting is given by the obvious inclusion \(A \to \tens A\) into the first tensor factor. This extension is universal in the following sense:

\begin{lemma}\label{lem:universal-extension}
Let \(K \into E \onto A\) be a semi-split extension of complete, bornologically torsionfree \(\dvr\)-algebras. Then there is a morphism of extensions 

\[
\begin{tikzcd}
\jens (A) \arrow{r} \arrow{d}{\gamma_A} & \tens (A) \arrow{r} \arrow{d} & A \arrow{d}{1_A} \\
K \arrow{r} & E \arrow{r} & A,
\end{tikzcd}
\]
where the map \(\gamma_A\) is called the \textit{classifying map} of the extension. Furthermore, the map \(\gamma_A\) is unique up to homotopy. 
\end{lemma}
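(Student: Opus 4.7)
The plan is to exploit the free–forgetful adjunction that defines $\tens(A)$. First, I would pick a bounded $\dvr$-linear section $s\colon A \to E$ of $g$; one exists by semi-splitness. By the universal property of the tensor algebra, $s$ extends uniquely to a bounded $\dvr$-algebra homomorphism $\tilde s \colon \tens(A) \to E$. Both $g \circ \tilde s$ and the counit $\eta_A$ are algebra maps $\tens(A) \to A$ whose restrictions to the generating submodule $A \subseteq \tens(A)$ equal $\id_A$, so they must agree on all of $\tens(A)$. Hence $\tilde s$ carries $\jens(A) = \ker \eta_A$ into $K = \ker g$, yielding the classifying map $\gamma_A$ that completes the claimed morphism of extensions.

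For uniqueness up to homotopy, I would take any two bounded $\dvr$-linear sections $s_0, s_1$ and interpolate linearly to produce the bounded $\dvr$-linear map
\[
\sigma \colon A \to E\gen{\Delta^1}^\updagger, \qquad a \longmapsto (1-t)\, s_0(a) + t\, s_1(a).
\]
The universal property again supplies a bounded algebra extension $\tilde\sigma \colon \tens(A) \to E\gen{\Delta^1}^\updagger$. The key structural input is Lemma~\ref{lem:tensor-exact}: tensoring the extension $K \into E \onto A$ with the complete torsionfree module $\dvr[t]^\updagger$ preserves the bornological embedding, so the induced map $g_\ast \colon E\gen{\Delta^1}^\updagger \to A\gen{\Delta^1}^\updagger$ still has kernel $K\gen{\Delta^1}^\updagger$. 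Checking on the generators $A$, the composite $g_\ast \circ \tilde\sigma$ agrees with the composition of $\eta_A$ followed by the constant inclusion $A \hookrightarrow A\gen{\Delta^1}^\updagger$; therefore $\tilde\sigma$ restricts to a bounded algebra homomorphism $H \colon \jens(A) \to K\gen{\Delta^1}^\updagger$. The evaluations $\ev_0 \circ H$ and $\ev_1 \circ H$ recover precisely the classifying maps built from $s_0$ and $s_1$, so $H$ is the required elementary dagger homotopy between them.

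The main obstacle I anticipate is the identification $\ker g_\ast \cong K\gen{\Delta^1}^\updagger$, which is exactly where Lemma~\ref{lem:tensor-exact} does the heavy lifting; without preservation of bornological embeddings under $(-)\hot \dvr[t]^\updagger$, the homotopy $H$ would not be well-defined into the correct kernel. Everything else is formal: the universal property of $\tens(A)$ reduces the construction of algebra maps to the choice of a bounded linear map on $A$, and two algebra maps out of $\tens(A)$ coincide as soon as they coincide on the generators $A$.
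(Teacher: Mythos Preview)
Your proposal is correct and follows exactly the standard argument from \cite{Cortinas-Thom:Bivariant_K}*{Proposition~4.4.1}, which is precisely what the paper invokes by saying the proof carries over mutatis mutandis. Your explicit use of Lemma~\ref{lem:tensor-exact} to identify $\ker g_\ast$ with $K\gen{\Delta^1}^\updagger$ is the one place where the bornological setting requires care, and you have handled it correctly.
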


\begin{proof}
The proof of \cite{Cortinas-Thom:Bivariant_K}*{Proposition 4.4.1} works mutatis-mutandis.
\end{proof}

In general, let \(K \into E \onto B\) be any semi-split extension of complete, torsionfree bornological algebras, and \(f \colon A \to B\) an algebra homomorphism. Then composing with a choice of section \(B \to E\), we get a bounded \(\dvr\)-linear map \(A \to E\). Using the universal property of the tensor algebra, we get a bounded \(\dvr\)-algebra homomorphism \(\tens (A) \to E\) extending \(f\). This restricts to an algebra homomorphism \(\gamma_A  \colon \jens(A) \to K\). Furthermore, as in Lemma \ref{lem:universal-extension}, different choices of sections produce algebra homomorphisms \(\jens (A) \to K\) that are homotopic to \(\gamma_A\). 

The following map clarifies the functoriality of the classifying map:

\begin{proposition}\label{prop:classifying-map-functorial}
Let \[
\begin{tikzcd}
A \arrow{r} \arrow{d}{f} & B \arrow{r} \arrow{d} & C \arrow{d}{g} \\
A' \arrow{r} & B' \arrow{r} & C'
\end{tikzcd}
\] is a morphism of semi-split extensions of complete, torsionfree bornological \(\dvr\)-algebras, then there is a homotopy commuting diagram 
\[
\begin{tikzcd}
\jens(C) \arrow{r}{\gamma_C} \arrow{d}{\jens(g)} & A \arrow{d}{f} \\
\jens(C') \arrow{r}{\gamma_{C'}} & A'.
\end{tikzcd}
\]
\end{proposition}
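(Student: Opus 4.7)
The plan is to exhibit both \(f \circ \gamma_C\) and \(\gamma_{C'} \circ \jens(g)\) as classifying maps of the extension \(A' \into B' \onto C'\) along the homomorphism \(g \colon C \to C'\), then apply the uniqueness-up-to-homotopy of classifying maps discussed after Lemma~\ref{lem:universal-extension}.

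First, I would fix bounded \(\dvr\)\nb-linear sections \(s \colon C \to B\) and \(s' \colon C' \to B'\) of the two semi-split extensions. Denote by \(h \colon B \to B'\) the middle vertical map. By the universal property of the tensor algebra, \(s\) and \(s'\) extend to bounded \(\dvr\)-algebra homomorphisms \(\tilde s \colon \tens(C) \to B\) and \(\tilde{s'} \colon \tens(C') \to B'\), whose restrictions to the kernels of the augmentations are, by construction, representatives of \(\gamma_C\) and \(\gamma_{C'}\). Unwinding definitions, \(f \circ \gamma_C\) is the restriction to \(\jens(C)\) of the algebra homomorphism \(h \circ \tilde s \colon \tens(C) \to B'\), whereas \(\gamma_{C'} \circ \jens(g)\) is the restriction to \(\jens(C)\) of \(\tilde{s'} \circ \tens(g) \colon \tens(C) \to B'\). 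Both algebra homomorphisms extend bounded \(\dvr\)-linear maps \(C \to B'\) that are sections of the projection \(B' \onto C'\) over \(g \colon C \to C'\); namely, \(h \circ s\) in the first case and \(s' \circ g\) in the second (using that the given diagram of extensions commutes).

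Next I would exhibit a linear path between the two lifts. Consider the bounded \(\dvr\)\nb-linear map
\[
\phi \colon C \longrightarrow B' \hot \dvr\gen{\Delta^1}^\updagger, \qquad c \longmapsto (1-t)(h \circ s)(c) + t(s' \circ g)(c).
\]
Its composition with the projection \(B' \hot \dvr\gen{\Delta^1}^\updagger \onto C' \hot \dvr\gen{\Delta^1}^\updagger\) equals \(g\) (constant in~\(t\)), because both \(h \circ s\) and \(s' \circ g\) are linear lifts of~\(g\) and the coefficients sum to~\(1\). Since the bottom extension is semi-split, tensoring with \(\dvr\gen{\Delta^1}^\updagger\) preserves the short exact sequence by Lemma~\ref{lem:4} (applied to \(K = \Delta^1\)) together with Lemma~\ref{lem:tensor-exact}, so \(\phi\) factors as a \(\dvr\)-linear lift through the kernel of the evaluated extension. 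Extending \(\phi\) via the universal property of the tensor algebra yields a bounded algebra homomorphism \(\tilde\phi \colon \tens(C) \to B' \hot \dvr\gen{\Delta^1}^\updagger\), whose restriction to \(\jens(C)\) lands in \(A' \hot \dvr\gen{\Delta^1}^\updagger\).

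Evaluating \(\tilde\phi\) at \(t = 0\) recovers \(h \circ \tilde s\), hence restricts to \(f \circ \gamma_C\); evaluating at \(t = 1\) recovers \(\tilde{s'} \circ \tens(g)\), hence restricts to \(\gamma_{C'} \circ \jens(g)\). Therefore the restriction of \(\tilde\phi\) to \(\jens(C)\) constitutes an elementary dagger homotopy \(\jens(C) \to A' \hot \dvr\gen{\Delta^1}^\updagger\) between the two candidates, which is exactly what is needed for the square to commute in the homotopy category.

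The only delicate point I foresee is checking that forming the linear combination \(\phi\) and extending it to the tensor algebra really does produce a bounded algebra homomorphism landing in \(A' \hot \dvr\gen{\Delta^1}^\updagger\) upon restriction to \(\jens(C)\); this requires that tensoring with \(\dvr\gen{\Delta^1}^\updagger\) respects the semi-split extension \(A' \into B' \onto C'\), which is guaranteed by Lemmas~\ref{lem:tensor-exact} and~\ref{lem:4}. Everything else is a direct unwinding of the construction of classifying maps in Lemma~\ref{lem:universal-extension} and the remarks following it.
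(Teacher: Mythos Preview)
Your argument is correct and follows the standard route: both \(f\circ\gamma_C\) and \(\gamma_{C'}\circ\jens(g)\) arise from linear lifts of \(g\colon C\to C'\) through the bottom extension, and the linear path between these lifts, pushed through the universal property of \(\tens\), gives the required elementary dagger homotopy. The paper does not spell this out---it simply cites \cite{Cortinas-Thom:Bivariant_K}*{Proposition~4.4.2} and says the proof carries over verbatim---so your write-up is a faithful expansion of what that reference does, adapted to the bornological setting. One small comment: the sentence ``so \(\phi\) factors as a \(\dvr\)-linear lift through the kernel of the evaluated extension'' is slightly garbled (it is \(\tilde\phi|_{\jens(C)}\), not \(\phi\), that lands in the kernel), but the next sentence states the correct claim, and the justification via Lemma~\ref{lem:tensor-exact} is the right one.
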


\begin{proof}
The proof of \cite{Cortinas-Thom:Bivariant_K}*{Proposition 4.4.2} works mutatis-mutandis.
\end{proof}

\begin{lemma}\label{lem:tensoring-univ-extension}
Let \(A\) be a complete, bornologically torsionfree \(\dvr\)-algebra. Then for any complete, torsionfree bornological algebra \(R\), we have a semi-split extension 
\[ R \hot \jens(A) \into R \hot \tens(A) \onto R \hot A\] of complete, bornologically torsionfree \(\dvr\)-algebras. 
\end{lemma}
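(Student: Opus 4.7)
The plan is to exploit semi-splitness: the universal extension $\jens(A) \into \tens(A) \overset{\eta_A}\onto A$ has a bounded $\dvr$\nobreakdash-linear section $s\colon A \to \tens(A)$, namely the inclusion of $A$ into the first tensor factor of $\tens(A) = \bigoplus_{n\in\N} A^{\hot n}$. Because this section splits the underlying short exact sequence of bornological modules, one gets a bornological direct sum decomposition $\tens(A) \cong \jens(A) \oplus A$ in the category of complete, torsionfree bornological $\dvr$-modules.

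Next, I would apply $R \hot -$ and transport this decomposition. The completed bornological tensor product is additive and commutes with finite direct sums (it is a left adjoint up to completion), so
\[
R \hot \tens(A) \cong (R \hot \jens(A)) \oplus (R \hot A),
\]
as complete, torsionfree bornological $\dvr$-modules. In particular, $1_R \otimes s \colon R \hot A \to R \hot \tens(A)$ is a bounded $\dvr$-linear section of $1_R \otimes \eta_A$. To identify the kernel of $1_R \otimes \eta_A$ with $R \hot \jens(A)$, I would invoke Lemma~\ref{lem:tensor-exact}: it guarantees that $1_R \otimes \iota \colon R \hot \jens(A) \to R \hot \tens(A)$ remains a bornological embedding, since $\iota\colon \jens(A) \into \tens(A)$ is a bornological embedding between complete, torsionfree bornological $\dvr$-modules. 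Combined with the splitting, this forces the resulting sequence of bornological modules to be short exact, with $R \hot \jens(A)$ realized as the kernel.

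Finally, I would check the algebraic part: each of the three terms $R \hot \jens(A)$, $R \hot \tens(A)$, $R \hot A$ is a complete, bornologically torsionfree $\dvr$-algebra (using \cite{Meyer-Mukherjee:Bornological_tf}*{Proposition 4.12} for torsionfreeness of the completed tensor product, and completeness by definition of $\hot$), and the maps $1_R \otimes \iota$ and $1_R \otimes \eta_A$ are bounded $\dvr$-algebra homomorphisms since they are tensor products of such maps with the identity on $R$. This yields the desired semi-split extension. There is no real obstacle: the proof only packages the fact that semi-splitness is preserved under the exact endofunctor $R \hot -$ on complete, torsionfree bornological modules, a phenomenon already encoded in Lemma~\ref{lem:tensor-exact} and the additivity of $\hot$.
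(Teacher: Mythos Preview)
Your proposal is correct and follows essentially the same approach as the paper: both exploit the bounded linear section of the universal extension and invoke Lemma~\ref{lem:tensor-exact} to ensure that $R \hot -$ preserves the bornological embedding $\jens(A) \into \tens(A)$, with the splitting given by $1_R \otimes s$. Your write-up is simply more explicit about the intermediate direct-sum decomposition and the verification that the terms and maps are algebra-level.
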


\begin{proof}
By Lemma \ref{lem:4}, tensoring by a complete bornologically torsionfree \(\dvr\)-module preserves bornological embeddings. Applying this to the universal semi-split extension \(\jens(A) \into \tens(A) \onto A\), we get the required extension that splits by the map \(1_R \otimes \sigma_A \colon R \hot A \to R \hot \tens A\), where \(\sigma_A\) is the section of \(\tens(A) \onto A\). 
\end{proof}

\begin{corollary}\label{cor:universal-simplicial-extension}
Let \(K\) be a finite pointed simplicial set. Then there is a homotopy class of maps \(\jens(A^K) \to \jens(A)^K\) represented by the following extension 
\[\jens(A)^K \into \tens(A)^K \onto A^K.\]
These maps are natural in the sense that if \(K \to L\) is a morphism of simplicial sets, then there is a homotopy commuting diagram 
\[
\begin{tikzcd}
\jens(A^K) \arrow{r} \arrow{d} & \jens(A)^K \arrow{d} \\
\jens(A^L) \arrow{r} & \jens(A)^L.
\end{tikzcd}
\]
\end{corollary}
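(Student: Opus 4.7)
The plan is to apply the universal property of the classifying map (Lemma \ref{lem:universal-extension}) to the extension obtained by applying the functor $(-)^K$ to the universal semi-split extension of $A$. The first task is to check that
\[
\jens(A)^K \into \tens(A)^K \onto A^K
\]
is indeed a semi-split extension in $\Alg_\dvr^\tf$. By Lemma \ref{lem:4}, since $K$ is a finite pointed simplicial set, the functor $(-)^K$ is naturally isomorphic to $\dvr^{(K,*)} \hot (-)$ on the underlying bornological modules, and $\dvr^{(K,*)}$ is itself complete and bornologically torsionfree as a direct summand of the finite limit $\dvr^K$ (cf.\ Lemma \ref{lem:complete-category} together with the splitting in the proof of Lemma \ref{lem:4}). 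Lemma \ref{lem:tensor-exact} then guarantees that tensoring with $\dvr^{(K,*)}$ preserves the bornological embedding $\jens(A) \into \tens(A)$, and exactness on the split extension yields the desired short exact sequence. The bounded $\dvr$-linear section $A \to \tens(A)$ from the universal extension is sent by $(-)^K$ to a bounded $\dvr$-linear section $A^K \to \tens(A)^K$, establishing semi-splitness.

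Next, I would invoke Lemma \ref{lem:universal-extension} with $A$ replaced by $A^K$, applied to the extension just constructed. This produces a morphism of extensions
\[
\begin{tikzcd}
\jens(A^K) \arrow{r} \arrow{d}{\gamma} & \tens(A^K) \arrow{r} \arrow{d} & A^K \arrow{d}{1_{A^K}} \\
\jens(A)^K \arrow{r} & \tens(A)^K \arrow{r} & A^K,
\end{tikzcd}
\]
whose left vertical arrow $\gamma \colon \jens(A^K) \to \jens(A)^K$ is the asserted classifying map, determined up to homotopy by the same lemma.

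For the naturality square, a morphism $K \to L$ of pointed simplicial sets induces, by contravariant functoriality of $(-)^K$, a morphism between the two semi-split extensions constructed above, with the identity map on the quotient in the appropriate direction. Proposition \ref{prop:classifying-map-functorial}, applied to this morphism of extensions, then produces the required homotopy-commutative square relating the classifying maps $\gamma_K$ and $\gamma_L$.

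The only nontrivial point is verifying that $(-)^K$ preserves semi-split extensions of torsionfree bornological algebras; this reduces to Lemma \ref{lem:tensor-exact}, once $A^K$ is identified with $\dvr^{(K,*)} \hot A$ via Lemma \ref{lem:4}. After this identification everything is formal: the construction of $\gamma$ is an instance of the general classifying-map recipe, and naturality is a direct application of Proposition \ref{prop:classifying-map-functorial}.
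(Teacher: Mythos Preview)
Your proof is correct and follows essentially the same route as the paper: both tensor the universal extension $\jens(A)\into\tens(A)\onto A$ with $\dvr^{(K,*)}$ (or $\dvr^K$), invoke Lemma~\ref{lem:4} to identify the result with the $(-)^K$ extension, and then apply Lemma~\ref{lem:universal-extension} and Proposition~\ref{prop:classifying-map-functorial}. You are somewhat more explicit than the paper in checking that $\dvr^{(K,*)}$ is complete and torsionfree and in spelling out the role of Lemma~\ref{lem:tensor-exact}, but the argument is the same.
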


\begin{proof}
Consider the universal tensor algebra extension \(\jens(A) \into \tens(A) \onto A\). Tensoring by \(\dvr^K\) viewed as a complete bornological \(\dvr\)-algebra with the fine bornology, we again get an extension 
\[ \jens(A) \hot \dvr^K \into \tens(A) \hot \dvr^K \onto A \hot \dvr^K \] of complete, bornologically torsionfree \(\dvr\)-algebras.
The result now follows from Lemma \ref{lem:4}.

Now suppose \(K \to L\) is a morphism of finite simplicial sets. Then the functoriality of the assignment \(K \mapsto A^K\) gives a morphism of complete bornological \(\dvr\)-algebras \(\dvr^K \to \dvr^L\). Tensoring by \(A\), we get a map \(g \colon A^K \to A^L\) using Lemma \ref{lem:4}. So we have a morphism of extensions 
\[
\begin{tikzcd}
\jens(A^K) \arrow{r} \arrow{d}{f} & \tens(A^K) \arrow{r} \arrow{d} & A^K \arrow{d}{g} \\
\jens(A^L) \arrow{r} & \tens(A^L) \arrow{r} & A^L.
\end{tikzcd}
\]
Now use Proposition \ref{prop:classifying-map-functorial} and Lemma \ref{lem:tensoring-univ-extension}.
\end{proof}

So far, we have constructed the \(J\)-functor on the homotopy category of complete, bornologically torsionfree algebras. The following lemma shows that the assignment \(\jens \colon A \to \jens A\) is compatible with matrix stabilisations, relative to any product stable bornological \(\dvr\)-module \(Z\): 

\begin{lemma}\label{lem:J-functor-matrices}
The functor \(\jens \colon \mathsf{Alg}_\dvr^\tf \to \mathsf{Alg}_\dvr^\tf\) extends to a functor on the category \(\mathsf{Alg}_{\mathcal{M}_{\infty, Z}^\updagger}^\tf\).
\end{lemma}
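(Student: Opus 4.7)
The plan is to construct, for every morphism $[f] \in \{A,B\} = [A, \mathcal{M}_{\infty,Z}^\updagger(B)]$, a well-defined morphism $\jens([f]) \in \{\jens(A), \jens(B)\}$ and then verify functoriality.

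First I would pick a representative $f \colon A \to \mathcal{M}_{\infty,Z}^\updagger(B)$ and apply the already-constructed $\jens$-functor (on the homotopy category of complete torsionfree bornological algebras) to obtain a bounded homomorphism $\jens(f) \colon \jens(A) \to \jens(\mathcal{M}_{\infty,Z}^\updagger(B))$. To land inside $\mathcal{M}_{\infty,Z}^\updagger(\jens(B))$ I would invoke Lemma~\ref{lem:tensoring-univ-extension} with $R = \mathcal{M}_{\infty,Z}^\updagger$, which gives the semi-split extension
\[
\mathcal{M}_{\infty,Z}^\updagger \hot \jens(B) \into \mathcal{M}_{\infty,Z}^\updagger \hot \tens(B) \onto \mathcal{M}_{\infty,Z}^\updagger \hot B.
\]
The universality of the tensor algebra extension (Lemma~\ref{lem:universal-extension}) then produces a classifying map $\kappa_B \colon \jens(\mathcal{M}_{\infty,Z}^\updagger(B)) \to \mathcal{M}_{\infty,Z}^\updagger(\jens(B))$, unique up to homotopy. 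I would define $\jens([f])$ to be the homotopy class of the composite $\kappa_B \circ \jens(f)$.

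Next I would verify well-definedness. If $f \sim f'$, then $\jens(f) \sim \jens(f')$ because $\jens$ is already a functor on the homotopy category; composing with the fixed (up to homotopy) classifying map $\kappa_B$ preserves this homotopy relation. Independence from the choice of $\kappa_B$ follows from the uniqueness clause in Lemma~\ref{lem:universal-extension}.

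The main work is checking functoriality, and this is where I expect the main obstacle to lie. For the identity, one observes that $\iota_A \colon A \to \mathcal{M}_{\infty,Z}^\updagger(A)$ fits into a morphism of semi-split extensions with the identity on the quotient, so Proposition~\ref{prop:classifying-map-functorial} forces $\kappa_A \circ \jens(\iota_A) \sim \iota_{\jens(A)}$. For composition, given representatives $f\colon A\to\mathcal{M}_{\infty,Z}^\updagger(B)$ and $g\colon B\to\mathcal{M}_{\infty,Z}^\updagger(C)$, the product in $\mathsf{Alg}_{\mathcal{M}_{\infty,Z}^\updagger}^\tf$ is $(m\otimes 1_C)\circ(1_{\mathcal{M}_{\infty,Z}^\updagger}\otimes g)\circ f$, as in Lemma~\ref{lem:composition-matrix}. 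Applying $\jens$ and the classifying-map construction, I would reduce the equality of $\jens([g]\star[f])$ and $\jens([g])\star\jens([f])$ to the naturality of $\kappa_{(-)}$ in the algebra variable plus the fact that the matrix multiplication map $m\colon\mathcal{M}_{\infty,Z}^\updagger\hot\mathcal{M}_{\infty,Z}^\updagger\to\mathcal{M}_{\infty,Z}^\updagger$ lifts through the tensor algebra construction. Concretely, I would chase the commuting diagram of semi-split extensions obtained by tensoring the universal extension for $C$ successively with $\mathcal{M}_{\infty,Z}^\updagger$ twice, and apply Proposition~\ref{prop:classifying-map-functorial} to each square. The delicate point is that $\jens$ does not commute with the tensor product $\hot$ on the nose, only up to the classifying maps; keeping track of the compatibility between $\kappa_{\mathcal{M}_{\infty,Z}^\updagger(C)}$, $\kappa_C$ and the multiplication $m$ is what has to be carefully arranged, but once this is established the resulting diagram commutes up to homotopy and yields the desired functoriality.
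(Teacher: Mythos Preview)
Your proposal is correct and follows exactly the same approach as the paper: apply $\jens$ to a representative $f$, then compose with the classifying map $\kappa_B \colon \jens(\mathcal{M}_{\infty,Z}^\updagger(B)) \to \mathcal{M}_{\infty,Z}^\updagger(\jens(B))$ coming from Lemma~\ref{lem:tensoring-univ-extension} and the universal property. The paper's proof is in fact much terser than yours---it only constructs the map and does not spell out well-definedness or functoriality---so your more careful verification of the identity and composition laws via Proposition~\ref{prop:classifying-map-functorial} is a welcome elaboration rather than a deviation.
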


\begin{proof}
Any morphism in \(\mathsf{Alg}_{\mathcal{M}_{\infty, Z}^\updagger}^\tf\)  can be represented by a bounded \(\dvr\)-algebra homomorphism \(f \colon A \to \mathcal{M}_{\infty, Z}^\updagger(B)\).  This induces a map \(\jens(A) \to \jens(\mathcal{M}_{\infty, Z}^\updagger(B))\) (see Proposition \ref{prop:classifying-map-functorial}). Now apply Lemma \ref{lem:tensoring-univ-extension} to the algebra \(R = \mathcal{M}_{\infty, Z}^\updagger\), which provides a map \(\jens(\mathcal{M}_{\infty, Z}^\updagger \hot B) \to \jens(B) \hot \mathcal{M}_{\infty, Z}^\updagger\). The composition of these two maps yields the required representative \(\jens(A) \to \jens(B) \hot \mathcal{M}_{\infty, Z}^\updagger\) of a homotopy class in \([\jens A, \mathcal{M}_{\infty, Z}^\updagger \hot \jens B]\). 
\end{proof}

We now define an extension that will be instrumental in defining the mapping space for bivariant analytic \(kk\)-theory, and proving several of its properties. Let \(A\) be a complete, bornologically torsionfree \(\dvr\)-algebra. Define \(P \defeq \dvr^{(\Delta^1, *)}\) and \(\mathcal{P} \defeq \dvr^{(\mathsf{sd}^\bullet(\Delta^1), *)}\), where \(\mathsf{sd}^\bullet\) is the simplicial subdivision functor discussed before Lemma \ref{lem:5}. Then \(P \cong \ker(\dvr[t]^\updagger \overset{\ev_0}\to \dvr)\), and we have an extension of complete, bornologically torsionfree algebras 

\[ \Omega \into P \overset{\ev_1} \onto \dvr,\] which upon tensoring with \(A\) yields the semi-split extension \[\Omega(A) \into P(A) \onto A,\] called the \textit{loop extension}. Here \(\Omega(A) = \Omega \hot A\) and \(P(A) = P \hot A\), and we have used Lemma \ref{lem:4} to justify that tensoring by \(A\) is indeed exact. The \(\dvr\)-linear splitting is the one induced by \(A \subseteq A \oplus A \to A \gen{\Delta^1}^\updagger\). Here \(P(A) \defeq P \hot A\). By the universal property of the tensor algebra extension, there is a natural map  \[\varrho_A \colon \jens (A) \to \Omega(A)\] which is unique up to homotopy. In a similar manner, there is a semi-split extension \[A^{\mathcal{S}^1} \into \mathcal{P}(A) \overset{\ev_1}\onto A,\] which yields the classifying map \(\jens(A) \to A^{\mathcal{S}^1}\), obtained from the composition \(\jens(A) \to \Omega(A) \to A^{\mathcal{S}^1}\).

Now let \(f \colon A \to B\) be a bounded \(\dvr\)-algebra homomorphism. Then taking the pullback 

\begin{equation}\label{eq:mapping-path} 
\begin{tikzcd}
\Omega(B) \arrow{r} \arrow{d} & P(B) \times_B A \arrow{d} \arrow{r} & A \arrow{d}{f} \\
\Omega(B) \arrow{r} & P(B) \arrow{r}{\ev_1} & B,
\end{tikzcd}
\end{equation} we get the \textit{mapping path extension of \(f\)}.

Let us now consider the matrix algebra \(\mathcal{M}_{\N}^{\mathrm{alg}}\) from Example \ref{ex:triv:matrix}. It is the matrix algebra corresponding to the matricial pair \(X = Y = \dvr^{(\N)}\). These are the \(\dvr\)-valued functions \(\N \times \N \to \dvr\) with finite support.  It is a dagger algebra with the fine bornology. Consider the algebra \(\Gamma^\dvr\) of functions \(f \colon \N \times \N \to \dvr\) such that the set \(\setgiven{f(n,m)}{(n,m) \in \N \times \N}\) is finite, and that there is an \(N\) such that the support of the functions \(f(n,-) \colon \N \to \dvr\) and \(f(-,n) \colon \N \to \dvr\) are bounded by \(N\). This is Karoubi's cone ring over \(\dvr\). Equipping it with the fine bornology, it becomes a complete, torsionfree bornological \(\dvr\)-algebra, which we denote simply by \(\Gamma\). Furthermore, \(\Gamma\) contains \(\mathcal{M}_\N^{\mathrm{alg}}\) as a (closed) ideal, whose quotient \(\Sigma = \Gamma /\mathcal{M}_\N^{\mathrm{alg}}\) is again a complete bornological \(\dvr\)-algebra, called the \textit{suspension algebra}. 

\begin{lemma}\label{lem:suspension-torsionfree}
The suspension algebra \(\Sigma = \Gamma/\mathcal{M}_\N^{\mathrm{alg}}\) defined above is bornologically torsionfree.
\end{lemma}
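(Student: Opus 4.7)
The plan is to verify the two conditions in Definition~\ref{def:bornologically_tf} for $\Sigma$: first, that multiplication by $\dvgen$ is injective, and second, that $\dvgen^{-1} S \defeq \setgiven{x \in \Sigma}{\dvgen x \in S}$ is bounded for every bounded $S \subseteq \Sigma$.

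Algebraic torsionfreeness is essentially immediate. Both $\Gamma$ and $\mathcal{M}_\N^{\mathrm{alg}}$ are $\dvr$-submodules of $\dvr^{\N \times \N}$, which is torsionfree over the domain~$\dvr$. If $f \in \Gamma$ with $\dvgen f \in \mathcal{M}_\N^{\mathrm{alg}}$, then $\dvgen f$ has finite support, and since multiplication by~$\dvgen$ on~$\dvr$ is injective, $f$ itself has the same (finite) support, so $f \in \mathcal{M}_\N^{\mathrm{alg}}$. Consequently $\Sigma$ embeds algebraically into $\Sigma \otimes \dvf$.

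For the bornological condition, recall that $\Gamma$ carries the fine bornology, so the quotient bornology on $\Sigma$ has as bounded subsets precisely the images $\pi(T)$ of finitely generated $\dvr$-submodules $T \subseteq \Gamma$, where $\pi \colon \Gamma \onto \Sigma$ is the projection. It then suffices to exhibit, for each such $T$, a finitely generated submodule $T' \subseteq \Gamma$ with $\dvgen^{-1} \pi(T) \subseteq \pi(T')$. Since $\pi(T)$ is a finitely generated torsionfree module over the DVR~$\dvr$, it is free of finite rank, so $\dvgen^{-1}\pi(T) \subseteq \Sigma \otimes \dvf$ is again a finitely generated $\dvr$-module (of the same rank). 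The subset $\dvgen^{-1} \pi(T) \cap \Sigma$ is a $\dvr$-submodule of this finitely generated module over the principal ideal domain~$\dvr$, hence itself finitely generated. Lifting a finite generating set back to $\Gamma$ yields the required $T'$.

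The only place that needs a bit of attention is the identification of $\dvgen^{-1} \pi(T)$ as a subset of $\Sigma \otimes \dvf$, which uses the algebraic torsionfreeness established in the first step together with flatness of $\dvf$ over~$\dvr$. After that, everything reduces to the elementary PID fact that submodules of finitely generated modules are finitely generated. No genuine obstacle is expected; the argument is parallel to standard proofs that quotients of complete torsionfree bornological modules by closed torsionfree ideals remain torsionfree, adapted to the fine bornology setting.
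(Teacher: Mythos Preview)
Your proof is correct but follows a different route from the paper's. The paper argues as follows: the extension \(\mathcal{M}_\infty^\Z \into \Gamma^\Z \onto \Sigma^\Z\) of \(\Z\)-algebras is a split exact sequence of free \(\Z\)-modules; tensoring with~\(\dvr\) then gives a \(\dvr\)-linearly split extension \(\mathcal{M}_\infty^\dvr \into \Gamma^\dvr \onto \Sigma^\dvr\), so \(\Sigma\) is a direct summand of the torsionfree module~\(\Gamma\) and hence algebraically torsionfree. Since \(\Sigma\) carries the fine bornology, the general fact recorded right after Definition~\ref{def:bornologically_tf} (that for the fine bornology algebraic and bornological torsionfreeness coincide) finishes the argument.

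You instead verify algebraic torsionfreeness directly by a support argument and then essentially reprove the fine-bornology equivalence via the PID structure of~\(\dvr\). This is more self-contained and perfectly valid. The paper's approach buys something extra, however: it simultaneously produces the bounded \(\dvr\)-linear section of \(\Gamma \onto \Sigma\), which is precisely what is needed in the very next paragraph to know that the algebraic cone extension is semi-split, so that tensoring by an arbitrary \(A \in \Alg_\dvr^\tf\) preserves exactness and excision applies. Your argument establishes the lemma as stated but does not by itself yield this splitting.
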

\begin{proof}
Let \(\mathcal{M}^\Z\) and \(\Gamma^\Z\) denote the algebras of finitely supported functions \(\N \times \N \to \Z\), and Karoubi's cone ring over \(\Z\). They yield an extension \[\mathcal{M}_{\infty}^{\Z} \into \Gamma^\Z \onto \Sigma^\Z\] of \(\Z\)-algebras as in \cite{Cortinas-Thom:Bivariant_K}*{Equation 31}. This is also a split exact sequence of free \(\Z\)-modules. Tensoring with \(\dvr\) produces an extension of \(\dvr\)-modules \[\mathcal{M}_\infty^{\dvr} \into \Gamma^\dvr \onto \Sigma^\dvr\] that splits by a \(\dvr\)-linear map. The fine bornology functor is exact, so applying it yields an extension \[\mathcal{M}^{\mathrm{alg}} \into \Gamma \onto \Sigma\] of complete, bornological \(\dvr\)-algebras with a bounded \(\dvr\)-linear splitting. The existence of such a splitting implies that \(\Sigma\) is torsionfree, and since it has the fine bornology, it is also bornologically torsionfree.  
\end{proof}

The extension \(\mathcal{M}^{\mathrm{alg}} \into \Gamma \onto \Sigma\) is called the \textit{algebraic cone extension} of \(\dvr\). Tensoring it by a complete, torsionfree bornological \(\dvr\)-algebra \(A\), we get an extension \[\mathcal{M}^{\mathrm{alg}}(A) \into \Gamma(A) \onto \Sigma(A),\] which we call the \textit{algebraic cone extension} of \(A\).

Next we consider the matrix algebra \(\mathcal{M}_Z^\updagger\) corresponding to \(Z = \coma{\bigoplus_{n \in \N} \dvr} \cong c_0(\N)\), denoted as before by \(\mathcal{M}^{\mathrm{cont}}\). Recall from Example \ref{ex:adic-matrix} that this is the Banach \(\dvr\)-algebra \(c_0(\N \times \N)\) of functions \(\N \times \N \to \dvr\) vanishing at infinity, with the supremum norm. Now since the quotient in the extension \(\mathcal{M}_\infty^{\dvr} \into \Gamma^\dvr \onto \Sigma^\dvr\) is torsionfree, \(\dvgen\)-adic completion is exact. Consequently, we get a semi-split extension \[\coma{\mathcal{M}_\infty^\dvr} \into \coma{\Gamma^\dvr} \onto \coma{\Sigma^\dvr}\] of \(\dvgen\)-adically complete \(\dvr\)-algebras. This is also a bornological extension if we equip the algebras with the bornology where all subsets are bounded, thereby yielding a semi-split extension \(\mathcal{M}^{\mathrm{cont}} \into \coma{\Gamma} \onto \coma{\Sigma}\) of complete, bornologically torsionfree \(\dvr\)-algebras. Finally, tensoring with a complete, bornologically torsionfree \(\dvr\)-algebra \(A\), we again have an extension of complete, torsionfree bornological \(\dvr\)-algebras  \[\mathcal{M}^{\mathrm{cont}}(A) \into \coma{\Gamma}(A) \onto \coma{\Sigma}(A),\] which we call the \textit{continuous cone extension} of \(A\).

\begin{remark}\label{rem:Calkin}
The \(\dvgen\)-adically completed suspension extension defined above can be regarded as a nonarchimedean analogue of the \textit{Calkin extension} \(\mathbb{K}(l^2(\N)) \into \mathbb{B}(l^2(\N)) \onto \mathcal{Q}(l^2(\N))\). A possible alternative to this version of the Calkin extension, which resembles the \(C^*\)-algebraic version, uses the notion of bounded operators on a \(\dvgen\)-adic Hilbert space \(\dvf(X) = \setgiven{\psi \colon X \to \dvf}{\abs{\psi(x)} \leq 1 \text{ for all but finitely many } x \in X}\). This is studied in \cite{claussnitzer2019aspects}. We however find the suspension algebra defined above more conducive to the type of stability results we seek, and have proved in \cite{Cortinas-Meyer-Mukherjee:NAHA}. It is quite plausible that the \(\dvgen\)-adic Hilbert spaces and the corresponding operator algebras are special cases of matrix algebras arising from matricial pairs as in the Archimedean case. 
\end{remark}

\begin{remark}\label{rem:analytic-cone}
The reader may have noticed that we have not defined a version of the cone extension whose kernel is the matrix algebra from Example \ref{exa:filtered_length-controlled_matrices}. This is because dagger completion is \emph{not} an exact functor, so starting with the algebraic cone extension does not work. 
\end{remark}

In light of Remark \ref{rem:analytic-cone}, from now on, we restrict our attention to stabilisations by the Banach algebra \(\mathcal{M}^{\mathrm{cont}}\). 

We now move on to the \textit{Toeplitz extension}. In the algebraic case, there is a semi-split extension \[\mathcal{M}_\infty^\Z \into \mathcal{T}^\Z \onto \Z[t,t^{-1}]\] of \(\Z\)-algebras. This induces an extension of \(\dvr\)-algebras \(\mathcal{M}_\infty^\dvr \into \mathcal{T}^\dvr \onto \dvr[t,t^{-1}]\). With the fine bornology, this becomes an extension of complete, bornologically torsionfree \(\dvr\)-algebras. Finally, if \(A\) is a complete, bornologically  torsionfree \(\dvr\)-algebra, then there is an induced semi-split extension \[\mathcal{M}^{\mathrm{alg}}(A) \into \mathcal{T}(A) \onto A[t,t^{-1}]\] of complete bornologically torsionfree \(\dvr\)-algebras. Here \(\tans (A) = \tans \otimes A\) and \(A[t,t^{-1}] = A \otimes \dvr[t,t^{-1}]\). The interesting extension is, of course, the \textit{\(\dvgen\)-adically completed Toeplitz extension}, which is obtained by taking the \(\dvgen\)-adic completion \[\mathcal{M}^{\mathrm{cont}}(A) \into \coma{\tans} \hot A \onto \coma{\dvr[t,t^{-1}]} \hot A,\] of the (algebraic) Toeplitz extension, and equipping the algebras involved with the bornology where all subsets are bounded, and then finally tensoring with \(A\).  


\subsection{Free products and quasi-homomorphisms}

We now discuss the free double construction. Let \(A\) be a complete, torsionfree bornological \(\dvr\)-algebra. Then the \textit{free product} \(Q(A) \defeq A * A\) of \(A\) with itself is a complete, bornological \(\dvr\)-algebra. Note that this exists for algebras internal to any closed symmetric monoidal category with direct sums that commute with \(\otimes\). That is, for any such category \(\mathcal{C}\), it is defined by the universal property \[\mathsf{Alg}(A * B, D) \cong \mathsf{Alg}(A,D) \times \mathsf{Alg}(B,D)\] for algebras \(A\), \(B\), \(D \in \mathsf{Alg}(\mathcal{C})\).  Back to our case, denoting by \(q(A) \defeq \ker(Q(A) \to A)\), we get a split extension of complete, bornologically torsionfree \(\dvr\)-algebras \[q(A) \into Q(A) \onto A,\] where the splitting is given by each of the two canonical inclusions \(\iota_1, \iota_2 \colon A \rightrightarrows Q(A)\). Being sections, these inclusions satisfy \[\iota_1(a) - \iota_2(a) \in q(A)\] for all \(a \in A\). That is, the pair \((\iota_1, \iota_2)\) is a \textit{quasi-homomorphism} in the following sense:

\begin{definition}\label{def:quasi-homomorphism}
Let \(A\), \(B\) and \(D\) be complete, bornologically torsionfree \(\dvr\)-algebras, and let \(B \unlhd D\) be an ideal. A pair of bounded \(\dvr\)-algebra homomorphisms \(f,g \colon A \rightrightarrows D\) is called a \textit{quasi-homomorphism} if \(f(a) -g(a) \in B\) for all \(a \in A\), and the linear map \(f - g\) is bounded. 
\end{definition}

The quasi-homomorphism \((\iota_1, \iota_2)\) above is universal in following sense:

\begin{lemma}\label{lem:quasi-homomorphism-universal}
Suppose \((f,g) \colon A \to D \unrhd B\) is a quasi-homomorphism, then there is a unique bounded \(\dvr\)-algebra homomorphism \(f * g \colon Q(A) \to D\) such that the following diagram commutes:

\[
\begin{tikzcd}
A \arrow{r}{\iota_1} \arrow[r, swap, shift right, "\iota_2"] \arrow{d}{=} & Q(A)  \arrow{d}{}  & q(A) \arrow[l, swap, tail, "\unrhd"]  \arrow{d}{} \\
A \arrow{r}{f} \arrow[r, swap, shift right, "g"]  & D  & B \arrow[l,  tail, "\unrhd"]. 
\end{tikzcd}
\]
The induced map \(q(A) \to B\) is called the \textit{classifying map} of \((f,g)\). 
\end{lemma}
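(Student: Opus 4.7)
The plan is to derive the existence and uniqueness of $f*g$ directly from the universal property of the free product $Q(A) = A * A$ stated in the paragraph preceding Definition~\ref{def:quasi-homomorphism}, and then verify separately that this map sends $q(A)$ into $B$. Since the universal property is exactly the content of the adjunction $\mathsf{Alg}(A * A, D) \cong \mathsf{Alg}(A,D) \times \mathsf{Alg}(A,D)$, both the existence and the uniqueness of $f*g$ are automatic once we fit $(f,g)$ into this framework.

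For the first step, I apply the universal property of $Q(A)$ to the pair $(f,g)$: since $f,g \colon A \to D$ are bounded $\dvr$-algebra homomorphisms, there is a unique bounded $\dvr$-algebra homomorphism $f * g \colon Q(A) \to D$ satisfying $(f * g) \circ \iota_1 = f$ and $(f * g) \circ \iota_2 = g$. This establishes the commutativity of the left-hand square of the diagram and the uniqueness statement.

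For the second step, I need to check that $f * g$ restricts to a bounded map $q(A) \to B$. The slick way is to quotient: let $\pi \colon D \to D/B$ be the bounded projection. The quasi-homomorphism hypothesis $f(a) - g(a) \in B$ means $\pi \circ f = \pi \circ g$ as bounded $\dvr$-algebra homomorphisms $A \to D/B$. Applying the universal property of $Q(A)$ to the pair $(\pi \circ f, \pi \circ f)$, the unique map $Q(A) \to D/B$ extending this coinciding pair factors through the fold map $\nabla \colon Q(A) \to A$. But both $\pi \circ (f*g)$ and $(\pi \circ f) \circ \nabla$ extend $\pi \circ f$ along $\iota_1$ and $\iota_2$, so by uniqueness $\pi \circ (f*g) = (\pi \circ f) \circ \nabla$. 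Restricting to $q(A) = \ker \nabla$ gives $\pi \circ (f*g)|_{q(A)} = 0$, i.e.\ $(f*g)(q(A)) \subseteq B$. Boundedness of the restriction is automatic since $q(A)$ carries the subspace bornology from $Q(A)$.

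The only potential obstacle is making sure the universal property of the free product really produces a \emph{bounded} algebra homomorphism in the category $\mathsf{Alg}_\dvr^\tf$, and that $D/B$ is again a bornological $\dvr$-algebra with $\pi$ bounded. The former is precisely the content of the universal property recorded just before Definition~\ref{def:quasi-homomorphism} (the category of complete, torsionfree bornological algebras being internal to a suitable closed symmetric monoidal category with compatible direct sums); the latter holds because $B$ is a (bornological) ideal in $D$ so the quotient bornology turns $D/B$ into a bornological $\dvr$-algebra and $\pi$ is bounded by construction. With these in hand no further computation is required.
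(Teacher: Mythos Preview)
Your argument for the existence and uniqueness of \(f*g\), and your quotient argument showing that \((f*g)(q(A)) \subseteq B\), are both correct. The quotient route is cleaner than the paper's explicit description via monomials; note, however, that \(D/B\) need not be complete or torsionfree, so the universal property of \(Q(A)\) as stated does not literally apply to the target \(D/B\). This is harmless: you only use \emph{uniqueness}, and that follows simply because \(Q(A)\) is generated as an algebra by \(\iota_1(A)\cup\iota_2(A)\), so any two algebra homomorphisms out of \(Q(A)\) agreeing on these generators coincide.

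There is, however, a genuine gap in your final step. You assert that boundedness of the restriction \(q(A)\to B\) is automatic because \(q(A)\) carries the subspace bornology. This only gives boundedness of \(q(A)\to D\). In the definition of quasi-homomorphism (Definition~\ref{def:quasi-homomorphism}), \(B\) is a complete, torsionfree bornological algebra in its own right, with a bornology that may be strictly finer than the subspace bornology inherited from \(D\); this is precisely why the condition ``the linear map \(f-g\) is bounded'' (as a map \(A\to B\)) is listed as a separate hypothesis rather than being automatic. Your argument never uses this hypothesis, and without it the corestriction \(q(A)\to B\) need not be bounded. The paper's proof handles this by observing that every element of \(q(A)\) is a bounded combination of products \(x\cdot(\iota_1(a)-\iota_2(a))\) with \(x\in Q(A)\), \(a\in A\), and that \((f*g)\) sends such a product to \((f*g)(x)\cdot(f(a)-g(a)) \in D\cdot B\); boundedness into \(B\) then follows from the boundedness of \(f-g\colon A\to B\) and of the multiplication \(D\times B\to B\). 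You should either add this argument or add the standing assumption that \(B\) carries the subspace bornology from \(D\).
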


\begin{proof}
The pair \((f,g)\) induces a unique bounded algebra homomorphism \(Q(A) \to D\) by the universal property of free products. To describe this map explicitly, we first observe that a monomial \(a_1 \otimes b_1 \otimes \cdots \otimes a_n \otimes b_n\) is identified with \(\iota_1(a_1)\iota_2(b_1) \cdots \iota_1(a_n)\iota_2(b_n)\), and the image of alternating sums of such monomials under the maps \(\iota_1\) and \(\iota_2\) generates \(A * A\). The required map \(f*g \colon Q(A) \to D\) is defined on each such monomial by \(f*g(\iota_1(a_1)\iota_2(b_1) \cdots \iota_1(a_n)\iota_2(b_n)) \defeq f(a_1)g(b_1)\cdots f(a_n)g(b_n)\). It is bounded because \(f\) and \(g\) are bounded. By construction, this map makes the diagram above commute. Furthermore, since \(f - g\) and the multiplication map \(D \times B \to B\) are bounded, and the map \(Q(A) \hot A \to q(A)\), \(x \otimes a \mapsto x \cdot (\iota_1(a) - \iota_2(a))\) has a bounded linear section, the restriction \(q(A) \to B\) is bounded.  
\end{proof}

\section{Definition of bivariant analytic \(K\)-theory}\label{sec:definition-kk}

We now define bivariant analytic \(K\)-theory. Let \(f \colon A \to B\) be a morphism in \(\mathsf{Ind}(\mathsf{Alg}_\dvr^\tf)\). Consider the mapping path extension \[B^{\mathcal{S}^1} \into P(B) \times_B A \to A\] from Equation \eqref{eq:mapping-path}, obtained by pulling back the extension \(B^{\mathcal{S}^1} \into P(B) \onto B\) along \(f\). As this is a semi-split extension, the universal property of the tensor algebra extension \(\jens(A) \into \tens(A) \onto A\) produces a classifying map \(\jens(A) \to B^{\mathcal{S}^1}\). Using the functoriality of \(\jens\) and iterating, we get maps \[\jens^n(A) \to B^{\mathcal{S}^n} \to \mathcal{M}_{\infty,Z}^\updagger (B^{\mathcal{S}^n})\] for each \(n\). Let \([\alpha_n] \in [\jens^n(A), \mathcal{M}_{\infty,Z}^\updagger (B^{\mathcal{S}^n})]\) be a homotopy class represented by a bounded algebra homomorphism \(\jens^n(A) \to \mathcal{M}_{\infty,Z}^\updagger(B^{\mathcal{S}^n})\). Here \(Z\) is a complete, torsionfree bornological \(\dvr\)-module with a bilinear form as in Subsection \ref{subsec:stability}.  Using the tensor algebra extension again 

\[
\begin{tikzcd}
\jens^{n+1}(A) \arrow[r, tail] \arrow[d, "\alpha_{n+1}"] & \tens(\jens^n(A)) \arrow[r, two heads] \arrow{d}{} & \jens^n(A) \arrow{d}{\alpha_n} \\
\mathcal{M}_{\infty,Z}^\updagger(B^{\mathcal{S}^{n+1}}) \arrow[r, tail] & P(\mathcal{M}_{\infty,Z}^\updagger(B^{\mathcal{S}^n})) \arrow[r, two heads] & \mathcal{M}_\infty(B^{\mathcal{S}^n}), 
\end{tikzcd}
\] we get a bounded algebra homomorphism \(\alpha_{n+1}\) that represents a homotopy class in \([\jens^{n+1}(A), \mathcal{M}_\infty(B^{\mathcal{S}^n})]\). The assignment \([\alpha_n] \mapsto [\alpha_{n+1}]\) are the structure maps of an inductive system of abelian groups. 

\begin{definition}\label{def:bivariant_K}
For a fixed complete, torsionfree bornological \(\dvr\)-module \(Z\) with a bilinear form, the \textit{bivariant analytic \(K\)-theory} groups (relative to \(Z\)) are defined as the colimit \[ \kk_Z(A,B) \defeq \mathsf{colim}_n [\jens^n A, \mathcal{M}_{\infty,Z}^\updagger (B^{\mathcal{S}^n})].\] of dagger homotopy classes of bounded \(\dvr\)-algebra homomorphisms. 
\end{definition}  

We now define a category whose morphisms are given by \(\kk_Z(A,B)\) for inductive systems of complete, torsionfree bornological \(\dvr\)-algebras. Consider the endofunctors \(\jens, (-)^{\mathsf{S}^1} \colon  \mathsf{Ind}(\mathsf{Alg}_\dvr^\tf) \rightrightarrows \mathsf{Ind}(\mathsf{Alg}_\dvr^\tf)\). Recall that the loop extension \[A^{\mathcal{S}^1} \into \mathcal{P}(A) \overset{\ev_1}\onto A\] induces the classifying map \(\varrho_A \colon \jens(A) \to \mathsf{A}^{\mathcal{S}^1}\). This defines a natural transformation between the two endofunctors considered above. More concretely, if \(f \colon A \to B\) is a morphism in \(\mathsf{Ind}(\Alg_\dvr^\tf)\), we have a homotopy commutative diagram

\[
\begin{tikzcd}
\jens(A) \arrow{r}{\varrho_A} \arrow{d}{\jens(f)} & A^{\mathcal{S}^1} \arrow{d}{f^{\mathcal{S}^1}} \\
\jens(B) \arrow{r}{\varrho_B} & B^{\mathcal{S}^1},
\end{tikzcd}
\]  where homotopy commutativity means that \begin{equation}\label{eq:well-defined1}
f^{\mathcal{S}^1} \circ \varrho_A = \varrho_B \circ \jens(f) \in [\jens(A), B^{\mathcal{S}^1}].
\end{equation}
 There is a canonical map \([-,-] \to \{-,-\}\), so the same equality holds under its image in the latter group. Now consider the classifying map \(\gamma_A \colon \jens(A^{\mathcal{S}^1}) \to \jens(A)^{\mathcal{S}^1}\) constructed in Corollary \ref{cor:universal-simplicial-extension}. This induces a map 
 \begin{equation}\label{eq:well-defined2}
 \gamma_A \circ \jens(\varrho_A) = - \varrho_{\jens(A)} \colon \jens^2(A) \to \jens(A)^{\mathcal{S}^1} \in [\jens^2(A), \jens(A)^{\mathcal{S}^1}],
 \end{equation} where the equality of the two maps follows from the uniqueness of classifying maps up to homotopy. To discuss the composition rule in \(\kk_Z\), we fix some notation. For an algebra \(A\), define \[
 \gamma_A^{1,n} \defeq (\gamma_A)^{\mathcal{S}^{n-1}} \circ \dotsc \circ \gamma_{A^{\mathcal{S}^{n-2}}}^{\mathcal{S}^1} \circ \gamma_{A^{\mathcal{S}^{n-1}}} \colon \jens(A^{\mathcal{S}^{n}}) \to \jens(A)^{\mathcal{S}^{n}}\] and 
 
\begin{equation}\label{eqref:composition-kk}
\gamma_{A}^{m,n} = \gamma_{\jens^{m-1}(A)}^{1,n} \circ \dotsc \circ \jens^{m-2} \gamma_{\jens A}^{1,n} \circ \jens^{m-1} \gamma_{A}^{1,n} \colon \jens^m(A^{\mathcal{S}^n}) \to \jens^m(A)^{\mathcal{S}^n}
\end{equation} for \(m\), \(n \geq 0\).

\begin{theorem}\label{lem:composition}
Let \(A\), \(B\) and \(C \in \mathsf{Ind}(\Alg_\dvr^\tf)\). There is an associative composition product \[\kk_Z(B,C) \times \kk_Z(A,B) \to \kk_Z(A,C)\] given by extending the composition of algebra homomorphisms.
\end{theorem}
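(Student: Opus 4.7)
The plan is to define the composition on representatives and then verify three things: that it descends to homotopy classes, that it is compatible with the structure maps of the defining colimits, and that it is associative. Fix representatives $\alpha \colon \jens^m(A) \to \mathcal{M}_{\infty,Z}^\updagger(B^{\mathcal{S}^m})$ and $\beta \colon \jens^n(B) \to \mathcal{M}_{\infty,Z}^\updagger(C^{\mathcal{S}^n})$. I would define $\beta \star \alpha$ at level $m+n$ by the composition
\begin{multline*}
\jens^{n+m}(A) = \jens^n(\jens^m(A)) \xrightarrow{\jens^n(\alpha)} \jens^n\bigl(\mathcal{M}_{\infty,Z}^\updagger(B^{\mathcal{S}^m})\bigr) \to \mathcal{M}_{\infty,Z}^\updagger\bigl(\jens^n(B^{\mathcal{S}^m})\bigr) \\
\xrightarrow{\mathcal{M}(\gamma_B^{n,m})} \mathcal{M}_{\infty,Z}^\updagger\bigl(\jens^n(B)^{\mathcal{S}^m}\bigr) \xrightarrow{\mathcal{M}(\beta^{\mathcal{S}^m})} \mathcal{M}_{\infty,Z}^\updagger\bigl(\mathcal{M}_{\infty,Z}^\updagger(C^{\mathcal{S}^{n+m}})\bigr) \xrightarrow{\mu} \mathcal{M}_{\infty,Z}^\updagger(C^{\mathcal{S}^{n+m}}),
\end{multline*}
where the second arrow is supplied by Lemma~\ref{lem:J-functor-matrices}, the map $\gamma_B^{n,m}$ is the iterated classifying map of Equation~\eqref{eqref:composition-kk}, and $\mu$ is the tensor product on $\mathcal{M}_{\infty,Z}^\updagger$ guaranteed by the product-stable hypothesis on $Z$ in Subsection~\ref{subsec:stability}.

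To see that the resulting class in $[\jens^{n+m}(A), \mathcal{M}_{\infty,Z}^\updagger(C^{\mathcal{S}^{n+m}})]$ is independent of the choice of representative $\alpha$, I would argue that each stage of the construction preserves dagger homotopies: $\jens$ does because tensoring by $A\gen{\Delta^1}^\updagger$ is exact (Lemma~\ref{lem:tensoring-univ-extension}), tensoring with $\mathcal{M}_{\infty,Z}^\updagger$ is functorial, and the classifying maps are well-defined only up to homotopy by Lemma~\ref{lem:universal-extension}. The naturality of $\gamma_B^{n,m}$ furnished by Proposition~\ref{prop:classifying-map-functorial} handles a homotopy on the $\beta$-side symmetrically.

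Compatibility with the colimit structure is the core technical point. I must show that if $\alpha^{(1)} \colon \jens^{m+1}(A) \to \mathcal{M}_{\infty,Z}^\updagger(B^{\mathcal{S}^{m+1}})$ denotes the image of $\alpha$ under the structure map, then $\beta \star \alpha^{(1)}$ represents the same $\kk_Z$-class as the image of $\beta \star \alpha$ under its own structure map, and likewise in the second variable. This reduces to the homotopy commutativity of two squares: one expressing the compatibility of $\gamma_B^{n,m+1}$ with $\jens^n$ applied to the suspension classifying map, and another expressing the naturality of $\gamma$ in its algebra argument. Both are furnished by Equations~\eqref{eq:well-defined1}--\eqref{eq:well-defined2} together with Proposition~\ref{prop:classifying-map-functorial}. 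Associativity then follows by comparing the two ways to compose three classes $[\alpha]$, $[\beta]$, $[\gamma]$ at levels $m$, $n$, $k$: both produce, up to homotopy, a map $\jens^{k+n+m}(A) \to \mathcal{M}_{\infty,Z}^\updagger \hot \mathcal{M}_{\infty,Z}^\updagger \hot \mathcal{M}_{\infty,Z}^\updagger (D^{\mathcal{S}^{k+n+m}})$ followed by iterated multiplication, and the two associations agree because $\mu$ is associative up to dagger homotopy by the discussion after Lemma~\ref{lem:important-matrix}, while the iterated classifying maps cohere by Proposition~\ref{prop:classifying-map-functorial}. The class $[\iota_A]$ at level zero is a two-sided unit.

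The hard part will be the colimit-compatibility step. The identifications used there require commuting $\jens$ past both $(-)^{\mathcal{S}^m}$ and $\mathcal{M}_{\infty,Z}^\updagger$; each individual homotopy exists by the relevant universal property, but assembling them coherently at arbitrary bidegree $(n,m)$ is delicate and relies essentially on the identity $\gamma_A \circ \jens(\varrho_A) = \varrho_{\jens(A)}$ recorded in Equation~\eqref{eq:well-defined2}, iterated via~\eqref{eqref:composition-kk}.
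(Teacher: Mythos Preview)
Your approach is essentially the one the paper takes: define the product on representatives via $\jens^n(\alpha)$, the classifying maps $\gamma_B^{n,m}$, $\beta^{\mathcal{S}^m}$, and the matrix multiplication $\mu$, then invoke Equations~\eqref{eq:well-defined1}--\eqref{eq:well-defined2} and the naturality of $\gamma$ from Corollary~\ref{cor:universal-simplicial-extension} for well-definedness and associativity.

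There is, however, one concrete omission that would make your ``hard part'' fail as written. You quote Equation~\eqref{eq:well-defined2} as $\gamma_A \circ \jens(\varrho_A) = \varrho_{\jens(A)}$, but the paper's identity is $\gamma_A \circ \jens(\varrho_A) = -\varrho_{\jens(A)}$, with a minus sign. Iterating this, passing a representative one step along either structure map of the colimit introduces a sign, and the bidegree $(n,m)$ composite you wrote down is therefore not stable under the structure maps unless you insert the correction $(-1)^{mn}$. The paper does exactly this: its explicit formula for the composite is $[g^{\mathcal{S}^n} \circ (-1)^{mn}\gamma_B^{m,n}] \star [\jens^m(f)]$. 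Once you restore the sign in \eqref{eq:well-defined2} and carry the factor $(-1)^{mn}$ through your definition, your verification of colimit-compatibility goes through and your argument matches the paper's.
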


\begin{proof}
Let \([f] \in \kk_Z(A,B)\) and \([g] \in \kk_Z(B,C)\) be represented by the bounded \(\dvr\)-algebra homomorphisms \(f \colon \jens^n(A) \to \mathcal{M}_{\infty,Z}^\updagger(B^{\mathcal{S}^n})\) and \(g \colon \jens^m(B) \to \mathcal{M}_{\infty,Z}^\updagger(C^{\mathcal{S}^m})\). Their composition \([g] \circ [f]\) is represented by
\begin{multline*}
\jens^{n+m} (A) \overset{\jens^m(f)}\to \mathcal{M}_{\infty,Z}^\updagger \hot \jens^m (B^{\mathcal{S}^n}) \to  \mathcal{M}_{\infty,Z}^\updagger \hot \jens^m(B)^{\mathcal{S}^n} \\
\to \mathcal{M}_{\infty,Z}^\updagger \hot \mathcal{M}_{\infty,Z}^\updagger \hot C^{\mathcal{S}^{n+m}} \to \mathcal{M}_{\infty,Z}^\updagger(C^{\mathcal{S}^{n+m}}).
\end{multline*} 
Here the morphism \(\mathcal{M}_{\infty,Z}^\updagger \hot \jens^m(B^{\mathcal{S}^n}) \to \mathcal{M}_{\infty,Z}^\updagger \hot \jens^m(B)^{\mathcal{S}^n}\) is induced by the map \(\jens^m(B^{\mathcal{S}^n}) \to \jens^m(B)^{\mathcal{S}^n}\) from Equation \ref{eqref:composition-kk}.  Explicitly, the composition is represented by the class \([g^{\mathcal{S}^n} \circ (-1)^{mn}\gamma_B^{m,n}] \star [\jens^m(f)]\). 
That the definition of the composition does not depend on specific choices of representatives and is associative follows Equations \ref{eq:well-defined1} and \ref{eq:well-defined2} and the naturality of the transformation \(\gamma_A\) discussed in Corollary \ref{cor:universal-simplicial-extension}. 
\end{proof}

\begin{definition}\label{def:bivariant-analytic-category}
We define a category \(\kk_Z\) whose objects are complete, bornologically torsionfree \(\dvr\)-algebras, and whose morphisms are \(\kk_Z(A,B)\) for two such algebras. 
\end{definition}

There is a canonical functor \(j \colon \mathsf{Alg}_\dvr^\tf \to \kk_Z\) which acts identically on objects and associates to each morphism \(f \colon A \to B\), its image under the canonical maps \[\Hom_{\mathsf{Alg}^\tf}(A,B) \to [A,B] \to \{A,B\} \to \kk_Z(A,B).\] A morphism \(f \colon A \to B\) in \(\mathsf{Alg}^\tf\) is called a \textit{\(\kk_Z\)-equivalence} if \(j(f)\) is invertible in the category \(\kk_Z\).

In this paper, we will mostly be interested in \(\kk_Z\) for \(Z\) as in Example \ref{ex:adic-matrix}. We denote the resulting bivariant analytic \(K\)-theory simply by \(\kk\) for the rest of this paper.

\subsection{Excision}

In this subsection, we prove that \(\kk\) satisfies excision for semi-split extensions of complete, torsionfree bornological \(\dvr\)-algebras. The proof follows the same approach as \cite{Cuntz:Weyl}*{Section 5} or \cite{Cortinas-Thom:Bivariant_K}*{Section 6.3}, so we have decided to be brief in its demonstration. We first fix some notation: let \(f \colon A \to B\) be a bounded \(\dvr\)-algebra homomorphism between two such algebras. Consider the path algebra diagram \[
\begin{tikzcd}
P(B) \times_B A \arrow{r}{p_f} \arrow{d} &  A \arrow{d}{f} \\
P(B) \cong tB\gen{t}^\updagger \arrow{r}{\ev_1} & B
\end{tikzcd}
\] from Example \ref{ex:path-extension}. To shorten notation, we denote the pullback \(P(B) \times_B A\) by \(P_f\). When we use the path algebra \(\mathcal{P}(B) = B^{\mathsf{sd}^\bullet(\Delta^1)}\), we denote the resulting pullback by \(\mathcal{P}_f\). These two path algebras are \(\kk\)-equivalent. Excision in the second variable can now be stated as follows:

\begin{theorem}\label{thm:excision-1}
Let \(D \in \mathsf{Alg}_\dvr^\tf\), and let \(A \overset{f}\into B \overset{g}\onto C\) be a semi-split extension in \(\mathsf{Alg}_\dvr^\tf\). Then there is a natural long exact sequence 
\[\kk(D, \Omega B) \overset{j(\Omega(g))^*}\to \kk(D, \Omega C) \overset{\delta}\to \kk(D, A) \overset{j(f)_*}\to \kk(D, B) \overset{j(g)_*}\to \kk(D, C)\] of \(\kk\)-groups.  
\end{theorem}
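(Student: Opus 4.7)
The plan is to follow the Puppe sequence approach of \cite{Cortinas-Thom:Bivariant_K}*{Section 6.3} and \cite{Cuntz:Weyl}*{Section 5}. The strategy splits into two parts: first, one develops a general Puppe-type long exact sequence for any bounded algebra homomorphism; second, one identifies the resulting fiber with \(A\) in \(\kk\) using the semi-splitting, thereby recovering the given extension as the source of the long exact sequence.

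For the first part, I would show that for any bounded \(\dvr\)-algebra homomorphism \(u \colon A' \to B'\), iterating the mapping path extension \(B'^{\mathcal{S}^1} \into \mathcal{P}_u \onto A'\) from Equation~\eqref{eq:mapping-path} yields a long exact sequence in \(\kk(D, -)\). The key ingredients are: the path algebra \(\mathcal{P}(B')\) is \(\kk\)-contractible via the canonical dagger homotopy retracting it to its basepoint, so that \(\mathcal{P}_u\) behaves as a homotopy fiber of \(u\); and iterating the construction on the projection \(\mathcal{P}_u \onto A'\) recovers \(B'^{\mathcal{S}^1}\) up to \(\kk\)-equivalence, producing the loop shift that propagates the sequence infinitely. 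This is formally analogous to the standard Puppe fiber sequence in stable homotopy.

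Applied to \(u = g\), there is a canonical bounded algebra homomorphism \(\psi \colon A \to \mathcal{P}_g\) sending \(a \mapsto (f(a), 0)\), where \(0\) denotes the zero constant path; this is well-defined since \(g \circ f = 0\). The central claim is that \(\psi\) is a \(\kk\)-equivalence. Granted this, the Puppe sequence for \(g\) combined with the \(\kk\)-equivalences \(\mathcal{P}_g \simeq A\) and \(C^{\mathcal{S}^1} \simeq \Omega C\) (the latter from the loop extension \(\Omega C \into P(C) \onto C\), whose middle term is \(\kk\)-contractible as in Example~\ref{ex:path-extension}) yields precisely the asserted long exact sequence. The connecting map \(\delta\) is then identified with the classifying map \(\jens(C) \to A\) of Lemma~\ref{lem:universal-extension}, transported via the canonical \(\kk\)-identification between \(\jens\) and \(\Omega\) implicit in Definition~\ref{def:bivariant_K}.

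The hard part is proving that \(\psi\) is a \(\kk\)-equivalence. My approach would be to exploit the semi-splitting \(s \colon C \to B\): although \(s\) is not multiplicative, the pair of bounded algebra homomorphisms \(\tens(\mathcal{P}_g) \rightrightarrows B\) obtained from the projection \(\mathcal{P}_g \to B\) and from a modification built using \(s\) form a quasi-homomorphism in the sense of Definition~\ref{def:quasi-homomorphism}, whose classifying map (Lemma~\ref{lem:quasi-homomorphism-universal}) factors through a bounded algebra homomorphism \(\jens(\mathcal{P}_g) \to A\) representing the inverse of \(\psi\) in \(\kk\). Verifying that this composite is genuinely a two-sided \(\kk\)-inverse reduces to a diagram chase through the universal tensor algebra extension and its naturality (Proposition~\ref{prop:classifying-map-functorial}), mirroring the argument of \cite{Cortinas-Thom:Bivariant_K}*{Section 6.3}, with the bornological completeness and torsionfreeness hypotheses used precisely where Lemma~\ref{lem:tensor-exact} is needed to keep kernels and tensor products in the right category.
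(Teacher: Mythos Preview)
Your proposal is correct and follows essentially the same approach as the paper: establish Puppe-type middle exactness for the mapping path extension of an arbitrary morphism (the paper invokes \cite{Cuntz:Weyl}*{Lemma~5.1}), identify \(A\) with \(P_g\) in \(\kk\) using the semi-splitting (the paper cites \cite{Cortinas-Thom:Bivariant_K}*{Lemma~6.3.2}), and then iterate, using the analogue of \cite{Cortinas-Thom:Bivariant_K}*{Corollary~6.3.5} to recognise the next fibre as \(\Omega B\).

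One technical caution regarding your sketch of the \(\kk\)-equivalence \(\psi\colon A \to \mathcal{P}_g\). The classifying map of a quasi-homomorphism \(\tens(\mathcal{P}_g) \rightrightarrows B \unrhd A\) (in the sense of Lemma~\ref{lem:quasi-homomorphism-universal}) is a map \(q(\tens(\mathcal{P}_g)) \to A\), not a map out of \(\jens(\mathcal{P}_g)\); your assertion that it ``factors through a bounded algebra homomorphism \(\jens(\mathcal{P}_g) \to A\)'' conflates the \(q(-)\) and \(\jens(-)\) constructions and would require further argument. The route actually taken in \cite{Cortinas-Thom:Bivariant_K}*{Lemma~6.3.2} is more direct: the linear section \(s\) produces a semi-split algebra extension \(A \into \mathcal{P}_g \onto P(C)\) via the second projection \((\gamma,b)\mapsto\gamma\), and one then exploits the contractibility of \(P(C)\) by an explicit homotopy rather than through the quasi-homomorphism formalism. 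Since you ultimately defer to \cite{Cortinas-Thom:Bivariant_K}*{Section~6.3} anyway, this does not affect the validity of your overall strategy.
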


\begin{proof}
By adapting the proof of \cite{Cuntz:Weyl}*{Lemma 5.1}, we see that the path extension of \(g\) yields a diagram \(\kk(D, P_g) \to \kk(D, B) \overset{j(g)_*}\to \kk(D, C)\) that is exact in the middle. Since \(g\) is linearly split, there is a \(\kk\)-equivalence between \(A\) and \(P_g\) by \cite{Cortinas-Thom:Bivariant_K}*{Lemma 6.3.2}, so that we can identify this diagram with the diagram \[\kk(D,P_g) \cong \kk(D, A) \overset{j(f)_*}\to \kk(D, B) \overset{j(g)_*}\to \kk(D,C).\] Applying the middle exactness of the path extension to the inclusion \(\iota_g \colon \Omega C \onto P_g\), we again get an extension \[kk(D, P_{\iota_g}) \to \kk(D, \Omega C) \to \kk(D,P_g) \cong \kk(D,A)\] that continues the extension above. The map \(\delta\) in the statement of the theorem is the composition \(\kk(D, \Omega C) \to \kk(D, P_g) \cong \kk(D, A)\). Now apply the analogue of \cite{Cortinas-Thom:Bivariant_K}*{Corollary 6.3.5} to the map \(g \colon B \onto C\) to get the identification \(\kk(D, \Omega B) \cong \kk(D, P_{\iota_g})\), which completes the proof.
\end{proof}

Dually, we have:

\begin{theorem}\label{thm:excision-2}
Let \(D \in \mathsf{Alg}_\dvr^\tf\), and let \(A \overset{f}\into B \overset{g}\onto C\) be a semi-split extension in \(\mathsf{Alg}_\dvr^\tf\). Then there is a natural long exact sequence 
\[\kk(C, D) \overset{j(g)_*}\to \kk(B, D) \overset{j(f)_*}\to \kk(A, D) \overset{\delta}\to \kk(\Omega C, D) \overset{j\Omega(g)_*}\to \kk(\Omega B, D)\] of \(kk\)-groups. Here \(\delta\) is the composition of \(\kk(A, D) \cong \kk(P_g, D) \to \kk(\Omega C, D)\). 
\end{theorem}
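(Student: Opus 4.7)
The plan is to mirror the proof of Theorem~\ref{thm:excision-1}, but applying the path algebra construction contravariantly. The two key ingredients remain the $\kk$\nobreakdash-equivalence between the kernel of a semi-split surjection and the associated mapping path algebra, and the middle exactness of the relevant representable functor. Since $\kk$ is defined by an inductive limit built out of the classifying maps of universal extensions, the mechanism producing these properties is essentially formal once it is installed on one side, and can be run on the other side by replacing ``composition with'' by ``pre-composition with''.

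First I would establish middle exactness of $\kk(-,D)$ at $\kk(B,D)$ for the path extension. Concretely, given the surjection $g\colon B\onto C$ with bounded linear section, the mapping path algebra gives a diagram $\Omega C\into P_g\to B\to C$ where the right-hand map is~$g$ and the middle map is the projection~$p_g$. I would prove that the sequence
\[
\kk(C,D)\overset{j(g)_*}\to\kk(B,D)\overset{j(p_g)_*}\to\kk(P_g,D)
\]
is exact in the middle. The argument is the contravariant analogue of \cite{Cuntz:Weyl}*{Lemma~5.1}: any class in $\kk(B,D)$ killed by pre-composition with~$p_g$ can, after enlarging~$n$, be represented by an algebra homomorphism $\jens^n(B)\to\mathcal{M}^{\mathrm{cont}}_\infty(D^{\mathcal{S}^n})$ whose restriction to~$\jens^n(P_g)$ is null-homotopic; using the classifying map of the path extension and the functoriality of~$\jens$ established in Lemma~\ref{lem:J-functor-matrices}, this homotopy can be extended to show that the class lifts through~$g$. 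Combining this with the $\kk$\nobreakdash-equivalence $A\simeq P_g$ coming from the analogue of \cite{Cortinas-Thom:Bivariant_K}*{Lemma~6.3.2} (valid because $g$ is semi-split) yields exactness of $\kk(C,D)\to\kk(B,D)\to\kk(A,D)$ at the middle term.

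Next I would apply the same middle exactness result to the natural inclusion $\iota_g\colon \Omega C\into P_g$, which is itself the kernel of the semi-split surjection $P_g\onto B$ induced by the second projection. This produces a middle-exact sequence
\[
\kk(A,D)\cong\kk(P_g,D)\overset{j(\iota_g)_*}\to\kk(\Omega C,D)\overset{}\to\kk(P_{\iota_g},D),
\]
and the connecting map~$\delta$ in the statement is by definition the composite of $\kk(A,D)\cong\kk(P_g,D)$ with $j(\iota_g)_*$. To continue the sequence one more step, I would identify $\kk(P_{\iota_g},D)$ with $\kk(\Omega B,D)$ via the contravariant analogue of \cite{Cortinas-Thom:Bivariant_K}*{Corollary~6.3.5}; informally, the path algebra of the path algebra is a loop space up to $\kk$\nobreakdash-equivalence. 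Iterating this procedure produces the full long exact sequence.

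The main obstacle I anticipate is the contravariant middle-exactness step. In the covariant setting, one can compose with a homotopy in the target, which is naturally a map ``out of'' the interval; in the contravariant setting the analogous move requires producing, from a null-homotopy in~$D$ of a map out of $\jens^n(P_g)$, an extension to a map out of $\jens^n(B)$. This extension is constructed via the classifying map of the universal tensor algebra extension combined with the $\dvr$\nobreakdash-linear section of $B\onto C$; the care needed here is bookkeeping with the iterated $\jens$ functors and the loop objects $(-)^{\mathcal{S}^n}$, together with the naturality supplied by Proposition~\ref{prop:classifying-map-functorial} and Corollary~\ref{cor:universal-simplicial-extension}. Once this is in place, the rest of the argument is a formal adaptation of the proof of Theorem~\ref{thm:excision-1}.
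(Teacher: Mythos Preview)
Your proposal is correct and follows essentially the same route as the paper's proof: establish contravariant middle exactness of $\kk(-,D)$ on the mapping-path diagram, then combine with the $\kk$-equivalences $A\simeq P_g$ and $P_{\iota_g}\simeq \Omega B$ from the Corti\~nas--Thom lemmas to propagate. The paper makes one step slightly more explicit than you do: given $[\alpha]\in\kk(B,D)$ with $j(p_g)^*[\alpha]=0$, the null-homotopy is packaged as a map $\varphi\colon \jens^n(P_g)\to \mathcal{P}(\mathcal{M}_\infty^{\mathrm{cont}}(D^{\mathcal{S}^n}))$, and the lift through $g$ is the concrete composite $\beta\colon \jens^{n+1}(C)\to \jens^n(\Omega C)\to \ker(\jens^n(p_g))\to \mathcal{M}_\infty^{\mathrm{cont}}(D^{\mathcal{S}^{n+1}})$, with the identification $[\alpha]=j(g)^*[\beta]$ following from uniqueness of classifying maps; this is exactly the ``extension via classifying map'' you anticipate in your final paragraph, so there is no gap.
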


\begin{proof}
We adapt the proof of \cite{Cortinas-Thom:Bivariant_K}*{Theorem 6.3.7} to our setting. Consider a semi-split bornological quotient map \(f \colon A \to B\). Then for an \([\alpha] \in \kk(A,D)\) such that \(j(\pi_f)([\alpha]) = 0\)  in \(\kk(P_f, D)\), we can choose an \(n\) such that \(\alpha \circ \jens^n(\pi_f)\) is null-homotopic for a representative \(\alpha \colon \jens^nA \to \mathcal{M}_\infty^{\mathrm{cont}}(D^{\mathcal{S}^n})\). As a consequence, there is a bounded \(\dvr\)-algebra homomorphism \(\varphi \colon \jens^n(P_f) \to \mathcal{P}(\mathcal{M}_\infty^{\mathrm{cont}}(D^{\mathcal{S}^n})) \cong \mathcal{M}_\infty^{\mathrm{cont}}(\mathcal{P}(D^{\mathcal{S}^n}))\) that is part of the following commuting diagram 
\[
\begin{tikzcd}
\ker(\jens^n(\pi_f)) \arrow{r}{} \arrow{d} & \jens^n(P_f) \arrow{r}{\jens^n(\pi_f)} \arrow{d}{\varphi} & \jens^n(A) \arrow{d}{\alpha} \\
\mathcal{M}_\infty^{\mathrm{cont}}(D^{\mathcal{S}^{n+1}}) \arrow{r}{} &  \mathcal{M}_\infty^{\mathrm{cont}}(\mathcal{P}(D^{\mathcal{S}^n})) \arrow{r}{} & \mathcal{M}_\infty^{\mathrm{cont}}(D^{\mathcal{S}^n}).
\end{tikzcd}
\]
Now consider the composite map \[\beta \colon \jens^{n+1}(B) \to \jens^n(\Omega B) \to \ker(\jens^n(\pi_f)) \to \mathcal{M}_\infty^{\mathrm{cont}}(D^{\mathcal{S}^{n+1}}).\] Then \(\jens(f)[\beta] = [\alpha]\) by the uniqueness of the classifying map. This shows that the diagram \[\kk(B,D) \overset{j(f)_*}\to \kk(A,D) \overset{j(p_{f})_*}\to \kk(P_f, D)\] is exact in the middle. The conclusion now follows from \cite{Cortinas-Thom:Bivariant_K}*{Corollary 6.3.3, Corollary 6.3.5}, which carries over to our setting.
\end{proof}

\subsection{Looping and delooping}

Recall the \textit{loop functor} \(\Omega\) defined on objects as \(\Omega(A) \defeq \ker(P(A) \overset{\ev_1}\onto A)\) and on morphisms \(f \colon A \to B\) using the functoriality of \(\Omega\) and the canonical map \([\Omega(A), \Omega(B)] \to \kk(\Omega(A), \Omega(B))\). In this section, we promote \(\Omega\) to a functor on \(\kk\) and show that it is an equivalence of categories:

\begin{proposition}\label{prop:loop-fully-faithful}
The functor \(\Omega \colon \kk \to \kk\) is fully faithful. That is, \[\Omega \colon \kk(A,B) \to \kk(\Omega (A), \Omega (B))\] is an isomorphism of abelian groups. 
\end{proposition}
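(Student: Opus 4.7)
The plan is to construct an explicit inverse to the map \(\Omega \colon \kk(A,B) \to \kk(\Omega A, \Omega B)\) using the structure of \(\kk(A,B) = \operatorname{colim}_n [\jens^n A, \mathcal{M}_\infty^{\mathrm{cont}}(B^{\mathcal{S}^n})]\) as a sequential colimit whose transition maps already iterate \(\jens\) on the first variable and a loop-like construction on the second.

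I would begin by establishing that the classifying map \(\varrho_A \colon \jens(A) \to \Omega(A)\) of the path extension is a \(\kk\)-equivalence. The key input is that \(P(A) = tA\langle t\rangle^\updagger\) is \(\kk\)-contractible: the formula \(h_s(p)(t) = p(st)\) defines a multiplicative dagger homotopy of algebra homomorphisms \(P(A) \to P(A)\langle\Delta^1\rangle^\updagger\) connecting \(\mathrm{id}_{P(A)}\) at \(s=1\) to the zero map at \(s=0\). Comparing the excision long exact sequence (Theorem~\ref{thm:excision-1}) for the path extension \(\Omega A \hookrightarrow P(A) \twoheadrightarrow A\) with the corresponding sequence for the universal tensor algebra extension \(\jens A \hookrightarrow \tens A \twoheadrightarrow A\) (whose middle term is also \(\kk\)-contractible by a similar scaling homotopy) shows that \(\varrho_A\) becomes an isomorphism in \(\kk\).

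Next, I would use this \(\kk\)-equivalence to identify the colimit defining \(\kk(\Omega A, \Omega B)\) with the one defining \(\kk(A,B)\), reindexed by a shift. The step from \([\jens^n A, \mathcal{M}_\infty^{\mathrm{cont}}(B^{\mathcal{S}^n})]\) to \([\jens^{n+1} A, \mathcal{M}_\infty^{\mathrm{cont}}(B^{\mathcal{S}^{n+1}})]\) encodes, up to \(\kk\)-equivalence, the application of \(\Omega\) to both source and target. Consequently the ``shift'' endomorphism of the sequential colimit, which is an isomorphism by cofinality, coincides with the map induced by \(\Omega\) on \(\kk\). Inverting the shift at the level of representatives produces the desired inverse of \(\Omega\).

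The main obstacle is verifying the precise compatibility between ``apply \(\Omega\) to both variables'' and the transition map of the sequential colimit. This requires tracking the canonical \(\kk\)-equivalences between \(\jens(\Omega A)\) and \(\Omega(\jens A)\), between \((\Omega B)^{\mathcal{S}^n}\) and \(\Omega(B^{\mathcal{S}^n})\), and between \(\mathcal{M}_\infty^{\mathrm{cont}}(\Omega B)\) and \(\Omega\, \mathcal{M}_\infty^{\mathrm{cont}}(B)\), all of which follow from Lemma~\ref{lem:tensor-exact} and Corollary~\ref{cor:universal-simplicial-extension} together with the excision argument of the first step. Their coherence --- i.e.\ that the composite agrees with \(\Omega\) in the category \(\kk\) and is a two-sided inverse to the shift --- is ensured by the uniqueness up to homotopy of classifying maps (Lemma~\ref{lem:universal-extension}), the naturality in Proposition~\ref{prop:classifying-map-functorial}, and the composition law of Theorem~\ref{lem:composition}, together with the sign conventions built into Equation~\eqref{eqref:composition-kk}.
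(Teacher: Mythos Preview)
Your proposal is correct and follows essentially the same route as the paper, which (via the reference to \cite{Cortinas-Thom:Bivariant_K}*{Lemmas 6.3.8, 6.3.9}) constructs the delooping map by sending a representative $\beta \colon \jens^n(A^{\mathcal{S}^1}) \to B^{\mathcal{S}^{n+1}}$ to the composite $\jens^{n+1}(A) \to \jens^n(A^{\mathcal{S}^1}) \overset{\beta}\to B^{\mathcal{S}^{n+1}}$---precisely your ``shift of the sequential colimit''. The one organizational difference is that you first establish $\varrho_A$ is a $\kk$-equivalence (which the paper records separately as Lemma~\ref{lem:noncommutative-loop}, placed \emph{after} this proposition) and then reason at the level of $\kk$, whereas the paper works directly with representatives without invoking that lemma; both orderings are valid since excision (Theorem~\ref{thm:excision-1}) is already available.
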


\begin{proof}
The same proof as in \cite{Cortinas-Thom:Bivariant_K}*{Lemma 6.3.8, Lemma 6.3.9} adapts to our setting. For clarity, we highlight the map in the other direction, called the \textit{delooping} map: associate to a class \([\beta] \in \kk(A^{\mathcal{S}^1}, B^{\mathcal{S}^1})\) represented by \(\beta \colon \jens^n(A^{\mathcal{S}^1}) \to B^{\mathcal{S}^{n+1}}\), the class in \(\kk(A,B)\) represented by \(\jens^{n+1}(A) \to \jens^n(A^{\mathcal{S}^1}) \overset{\beta}\to B^{\mathcal{S}^{n+1}}\). In op.cit., this map is shown to be well-defined at the level of \(kk\). \qedhere  
\end{proof}

The following is a consequence of excision:

\begin{lemma}\label{lem:noncommutative-loop}
Let \(A \in \Alg_\dvr^\tf\). Then the natural map \(\varrho_A \colon \jens A \to \Omega(A)\) induces a \(\kk\)-equivalence. 
\end{lemma}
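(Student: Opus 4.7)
The plan is to apply the excision long exact sequence of Theorem~\ref{thm:excision-1} to both the path extension \(\Omega(A) \into P(A) \onto A\) and the universal tensor algebra extension \(\jens(A) \into \tens(A) \onto A\), and then exploit the \(\kk\)-contractibility of both middle terms together with the naturality of excision with respect to the morphism of extensions whose left-hand vertical arrow is, by definition, the classifying map \(\varrho_A\).

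The first step is to verify that \(P(A)\) is dagger contractible. The bounded \(\dvr\)-algebra homomorphism
\[
H_P \colon P(A) \to P(A)\gen{\Delta^1}^\updagger \cong P(A) \hot \dvr[s]^\updagger, \qquad f(t) \mapsto f(st),
\]
where \(s\) denotes the homotopy coordinate, is well-defined because every \(f \in P(A) = \ker(\ev_0)\) satisfies \(f(0)=0\). Evaluating at \(s=1\) recovers \(\id_{P(A)}\) and evaluating at \(s=0\) gives the zero map, so \(\id_{P(A)} \simeq 0\) and \(P(A) \simeq 0\) in \(\kk\).

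Next I would show that \(\tens(A)\) is dagger contractible, using its natural \(\N\)-grading \(\tens(A) = \bigoplus_{n \ge 1} A^{\hot n}\). Define the scaling homotopy
\[
H_T \colon \tens(A) \to \tens(A) \hot \dvr[s]^\updagger, \qquad a_1 \hot \cdots \hot a_n \mapsto s^n (a_1 \hot \cdots \hot a_n).
\]
The identity \(\abs{xy} = \abs{x} + \abs{y}\) makes \(H_T\) a \(\dvr\)-algebra homomorphism, and boundedness follows because any bounded subset of \(\tens(A)\) lies in a finite partial sum \(\bigoplus_{n \le N} A^{\hot n}\), on which \(H_T\) multiplies by polynomials in \(s\) of bounded degree. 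Since every summand has positive degree, evaluation at \(s=0\) kills everything, while evaluation at \(s=1\) is the identity. Hence \(\tens(A) \simeq 0\) in \(\kk\).

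Finally, applying Theorem~\ref{thm:excision-1} to both extensions and using the morphism of extensions produced by the universal property (Lemma~\ref{lem:universal-extension}), naturality yields the commuting square
\[
\begin{tikzcd}
\kk(D, \Omega(A)) \arrow[r, "\delta_T"] \arrow[d, equal] & \kk(D, \jens(A)) \arrow[d, "(\varrho_A)_*"] \\
\kk(D, \Omega(A)) \arrow[r, "\delta_P"] & \kk(D, \Omega(A))
\end{tikzcd}
\]
where \(\delta_T\) and \(\delta_P\) are the connecting maps of the two long exact sequences. Since \(\tens(A) \simeq 0 \simeq P(A)\) in \(\kk\), both \(\delta_T\) and \(\delta_P\) are isomorphisms, hence \((\varrho_A)_* = \delta_P \circ \delta_T^{-1}\) is an isomorphism for every \(D \in \Alg_\dvr^\tf\). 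The Yoneda lemma then implies that \(\varrho_A\) is an isomorphism in \(\kk\). The main technical point is verifying boundedness of the scaling homotopy \(H_T\) with respect to the direct-sum bornology on \(\tens(A)\) and the linear growth bornology on \(\dvr[s]^\updagger\), along with the (non-unital) convention for the tensor algebra which ensures that evaluation at \(s=0\) genuinely vanishes.
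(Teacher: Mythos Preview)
Your proof is correct and follows essentially the same approach as the paper: both argue by applying excision to the universal and loop extensions, using contractibility of \(\tens(A)\) and \(P(A)\) to see that the connecting maps are isomorphisms, and then concluding via the naturality square (the paper sets \(D=\jens A\), you invoke Yoneda). The paper simply asserts the contractibility of \(\tens A\) and \(P A\) without justification, whereas you supply explicit scaling homotopies; your remarks on boundedness and the non-unital convention for \(\tens(A)\) are accurate.
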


\begin{proof}
Since the algebras \(\tens A\) and \(P A\) are contractible, Theorem \ref{thm:excision-1} implies that the boundary maps \( \kk(D, \Omega A) \to \kk(D, \jens A)\) and \(\kk(D, \Omega A)\) are isomorphisms for all \(D \in \Alg_\dvr^\tf\). By the naturality of the exact sequences in Theorem \ref{thm:excision-1}, we get a commuting diagram 
\[
\begin{tikzcd}
\kk(D, \Omega A) \arrow{r}{\cong} \arrow{d}{\mathrm{id}} & \kk(D, \jens A) \arrow{d}{\varrho_A^*} \\
\kk(D,\Omega A) \arrow{r}{\cong} & \kk(D, \Omega A),
\end{tikzcd}
\] which implies the result when we put \(D = \jens A\). 
\end{proof}

The following description of \(\kk\)-classes will be used in the subsequent sections:

\begin{lemma}\label{lem:Lemma-6.3.11}
Let \(f \colon \jens^n (A) \to B^{\mathcal{S}^n}\) denote a representative of a class in \(\kk(A,B)\). The map induced in \(\kk(\Omega^n A, \Omega^n B)\) by applying \(\Omega^n\) is given by the following composition:

\[\Omega^n(A) \overset{j(\varrho_A^n)^{-1}}\to \jens^n (A) \overset{f}\to B^{\mathcal{S}^n} \cong \Omega^n B.\]   
\end{lemma}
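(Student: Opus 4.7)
The plan is to invert the explicit delooping map $D$ constructed in the proof of Proposition~\ref{prop:loop-fully-faithful}, and verify that applying $D^n$ to the class in $\kk(\Omega^n A, \Omega^n B)$ represented by the displayed composition recovers $[f] \in \kk(A, B)$. Since $\Omega^n$ is invertible on $\kk$-groups by iterating Proposition~\ref{prop:loop-fully-faithful}, this will suffice to identify the image of $[f]$ under $\Omega^n$.

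First I would unwind the explicit formula for $D$: it sends a class represented by a morphism $\beta \colon \jens^m(A^{\mathcal{S}^1}) \to B^{\mathcal{S}^{m+1}}$ to the class represented by the composition $\jens^{m+1}(A) \to \jens^m(A^{\mathcal{S}^1}) \overset{\beta}{\to} B^{\mathcal{S}^{m+1}}$, where the first arrow is $\jens^m$ applied to the classifying map $\jens(A) \to A^{\mathcal{S}^1}$, which by construction factors through $\varrho_A \colon \jens A \to \Omega A$. Iterating this formula $n$ times, $D^n$ applied to the class in $\kk(\Omega^n A, \Omega^n B)$ represented at level $0$ by a morphism $\alpha \colon \Omega^n A \to \Omega^n B$ produces the class in $\kk(A, B)$ represented at level $n$ by the composition $\jens^n A \overset{j(\varrho_A^n)}{\longrightarrow} \Omega^n A \overset{\alpha}{\longrightarrow} \Omega^n B \cong B^{\mathcal{S}^n}$.

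Applying this formula to the composition $g \defeq \bigl(\Omega^n A \overset{j(\varrho_A^n)^{-1}}{\to} \jens^n A \overset{f}{\to} B^{\mathcal{S}^n} \cong \Omega^n B\bigr)$ claimed in the statement yields the class in $\kk(A,B)$ represented by the composition $\jens^n A \overset{j(\varrho_A^n)}{\longrightarrow} \Omega^n A \overset{j(\varrho_A^n)^{-1}}{\longrightarrow} \jens^n A \overset{f}{\longrightarrow} B^{\mathcal{S}^n}$. The first two factors cancel in $\kk$ (by definition of $j(\varrho_A^n)^{-1}$), leaving precisely the representative $f \colon \jens^n A \to B^{\mathcal{S}^n}$ of $[f]$. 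Invertibility of $\Omega^n$ then forces $g$ to represent $\Omega^n[f]$.

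The main obstacle is bookkeeping: one needs to verify carefully that the iterated classifying map $\jens^n(A) \to A^{\mathcal{S}^n}$ appearing in the iterated delooping formula coincides, after the canonical $\kk$-identification $A^{\mathcal{S}^n} \simeq \Omega^n A$ coming from Lemma~\ref{lem:noncommutative-loop}, with the morphism $j(\varrho_A^n)$, and similarly for the identification $B^{\mathcal{S}^n} \cong \Omega^n B$ in the target. Both compatibilities follow from the naturality of classifying maps (Proposition~\ref{prop:classifying-map-functorial}) and their uniqueness up to homotopy (Lemma~\ref{lem:universal-extension}), so this step is essentially a diagram chase.
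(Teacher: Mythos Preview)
The paper states this lemma without proof; the label suggests it is meant as the direct analogue of \cite{Cortinas-Thom:Bivariant_K}*{Lemma 6.3.11}, whose argument carries over verbatim. So there is no paper proof to compare against beyond that implicit reference, and your strategy via the delooping map \(D\) from Proposition~\ref{prop:loop-fully-faithful} is exactly the natural one.

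There is one point where your write-up is imprecise. Your formula for \(D^n\) is established for classes represented at level \(0\) by an honest algebra homomorphism \(\alpha \colon \Omega^n A \to \Omega^n B\). You then apply it to \(g\), which is not a level-\(0\) algebra homomorphism but a \(\kk\)-class involving the inverse \(j(\varrho_A^n)^{-1}\). To close this, observe that the colimit definition of \(\kk\) gives a tautological identification \(\kk(\jens A, B^{\mathcal{S}^1}) \cong \kk(A,B)\) by shifting levels, and that under the \(\kk\)-equivalences \(\varrho_A \colon \jens A \to \Omega A\) and \(\Omega B \to B^{\mathcal{S}^1}\) the delooping map \(D\) is precisely this level-shift conjugated by pre- and post-composition with those equivalences. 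This description of \(D\) is valid on all \(\kk\)-classes, not just level-\(0\) ones, and iterating it \(n\) times gives \(D^n(g)\) as the level-shift of \(j(f)\), which is \([f]\). With that said, your cancellation argument then goes through, and the bookkeeping you flag in the last paragraph is indeed routine.
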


We now show essential surjectivity. But before we get there, we recall \textit{infinite sum rings} used in the complex operator algebraic case by Cuntz and in the algebraic case by Corti\~nas-Thom:

\begin{definition}
Let \(A\) be a complete, torsionfree bornological \(\dvr\)-algebra. 
\begin{itemize}
\item A \textit{sum algebra} is a complete, torsionfree bornological \(\dvr\)-algebra \(A\) together with distinguished elements \(\alpha_1\), \(\alpha_2\), \(\beta_1\), \(\beta_2\) satisfying \(\alpha_1 \beta_1 = \alpha_2 \beta_2 = 1\), \(\beta_1 \alpha_1 + \beta_2 \alpha_2 = 1\), and \([\alpha_i, v] = [\beta_i, v] = 0\) for all \(v \in \dvr\), \(i = 1, 2\). We denote by \(a \oplus b \defeq \beta_1 a \alpha_1 + \beta_2 b \alpha_2\);
\item Let \(B \in \mathsf{Alg}_\dvr^\tf\) and let \(\phi, \psi \colon B \rightrightarrows A\) be a bounded algebra homomorphism into a sum algebra. Let  \(\phi \oplus \psi\) be the bounded algebra homomorphism \(B \to A\) defined by \(b \mapsto \psi(b) \oplus \phi(b)\); 
\item An \textit{infinite sum \(\dvr\)-algebra} is a sum algebra \(A\) with a bounded \(\dvr\)-algebra homomorphism \(\phi^\infty \colon A \to A\) satisfying \(\phi^\infty = \mathrm{id}_A \oplus \phi^\infty\). 
\end{itemize}
\end{definition}

\begin{lemma}\label{lem:wagorer-sumring}
For a unital algebra \(A \in \mathsf{Alg}_\dvr^\tf\), the algebras \(\Gamma(A)\)  and \(\coma{\Gamma(A)}\) are infinite sum rings.
\end{lemma}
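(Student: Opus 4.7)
The strategy is to exhibit explicit $\{0,1\}$-valued matrices in $\Gamma^\dvr$ that implement the sum algebra structure, pass them to $\Gamma(A)$ via the unit of $A$, and build the infinite-sum homomorphism $\phi^\infty$ by block-diagonal repetition along a fixed bijection $\N \cong \N \times \N$. Since every operation in sight is given by multiplication by integral matrices that preserve the $\dvgen$-adic filtration, the same constructions will work verbatim inside the $\dvgen$-adic completion $\coma{\Gamma(A)}$.

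For the sum algebra structure, first partition $\N = S_1 \sqcup S_2$ into two infinite subsets with bijections $\phi_i \colon \N \weq S_i$, and define matrices $\beta_i(n,k) \defeq \delta_{n,\phi_i(k)}$ and $\alpha_i(n,m) \defeq \beta_i(m,n)$. Each of $\alpha_i, \beta_i$ has at most one nonzero entry per row and per column, valued in $\{0,1\}$, hence lies in $\Gamma^\dvr$. A direct matrix computation yields $\alpha_i \beta_i = 1$ and $\beta_1 \alpha_1 + \beta_2 \alpha_2 = 1$, while centrality of $\dvr$ inside $\Gamma^\dvr$ supplies the required commutation relations. Since $A$ is unital, the elements $\alpha_i \otimes 1_A$ and $\beta_i \otimes 1_A$ turn $\Gamma(A) = \Gamma \hot A$ into a sum $\dvr$-algebra, and the same elements viewed inside $\coma{\Gamma(A)}$ handle the completion.

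For the infinite sum, fix a bijection $\chi \colon \N \weq \N \times \N$ arranged so that $\chi(\{0\} \times \N) = S_1$ and the induced bijection $\N \times \N_{\geq 1} \weq S_2$ reproduces the same structure on $S_2$; such a self-similar choice is standard. Define $\phi^\infty(a)(\chi(k,n), \chi(k,m)) \defeq a(n,m)$ and set the remaining entries to zero. The matrix $\phi^\infty(a)$ has the same row/column support bound and the same finite set of values as $a$, so lies in $\Gamma(A)$; because it is block-diagonal, $\phi^\infty$ is a $\dvr$-algebra endomorphism, and the chosen compatibility with $S_1 \sqcup S_2$ yields $\phi^\infty(a) = \beta_1 a \alpha_1 + \beta_2 \phi^\infty(a) \alpha_2$ on the nose. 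Boundedness of $\phi^\infty$ is automatic because $\Gamma$ has the fine bornology, and because $\phi^\infty$ preserves $\dvgen^n \Gamma(A)$ for each $n$, it extends continuously to $\coma{\Gamma(A)}$ carrying the bornology in which all subsets are bounded.

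The \emph{main subtlety}, which is combinatorial rather than analytic, lies in arranging $\chi$ so that $\phi^\infty = \mathrm{id}_A \oplus \phi^\infty$ holds literally rather than only up to conjugation by an adjointable isometry; if one is willing to weaken the claim to hold only up to such conjugation, Lemma \ref{lem:important-matrix} can be invoked to conclude, but for the statement as given one simply tracks the bijections carefully to obtain the identity on the nose.
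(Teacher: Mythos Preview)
Your proposal is correct and follows essentially the same route as the paper: build the sum-algebra and infinite-sum structure on $\Gamma^\dvr$, transport it to $\Gamma(A)$ via the unit of $A$ (i.e.\ as $\phi^\infty_\dvr \otimes \mathrm{id}_A$), and extend to the completion. The paper simply outsources the explicit construction of $\alpha_i,\beta_i,\phi^\infty$ to \cite{Cortinas-Thom:Bivariant_K}*{Lemma~4.8.2}, whereas you write it out; your recursive choice $\psi(k+1,n)=\phi_2(\psi(k,n))$ is exactly the standard one that makes $\phi^\infty=\mathrm{id}\oplus\phi^\infty$ hold literally. Two minor points: your bijection $\chi$ is declared as $\N\to\N\times\N$ but then used as if it went the other way (in $\chi(k,n)$ and $\chi(\{0\}\times\N)$), so fix the direction; and note that in the paper $\coma{\Gamma(A)}$ really means $\coma{\Gamma}\hot A$ rather than the $\dvgen$-adic completion of $\Gamma(A)$, so the cleanest way to handle boundedness is to observe that $\phi^\infty_A = \coma{\phi^\infty_\dvr}\hot\mathrm{id}_A$ is bounded because each tensor factor is, rather than to reason about the $\dvgen$-adic filtration directly.
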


\begin{proof}
The proof in \cite{Cortinas-Thom:Bivariant_K}*{Lemma 4.8.2} shows that \((\Gamma(\dvr), \phi_\dvr^\infty)\) is an infinite sum \(\dvr\)-algebra. With the fine bornology, this is a complete, bornologically torsionfree algebra, and the homomorphism \(\phi^\infty\) is bounded. The induced map \(\coma{\phi_\dvr^\infty} \colon \coma{\Gamma(\dvr)} \to \coma{\Gamma(\dvr)}\) is a bounded algebra homomorphism in the bornology where all subsets are bounded. It satisfies \(\coma{\phi_\dvr^\infty} = \mathrm{id}_{\coma{\Gamma(\dvr)}} \oplus \coma{\phi_\dvr^\infty}\) and \(\coma{\Gamma(\dvr)}\) is an infinite sum ring with distinguished elements given by the class of the distinguished elements in the \(\dvgen\)-adic completion, which is additive and hence preserves the relations defining such elements.  Tensoring with \(A\), we again get a bounded \(\dvr\)-algebra homomorphism \(\phi_A^\infty = \coma{\phi_\dvr^\infty} \hot \mathrm{id}_A \colon \coma{\Gamma} (A) \to \coma{\Gamma}(A)\) which satisfies \(\phi^\infty(a) = a \oplus \phi^\infty(a)\). 
\end{proof}

\begin{lemma}\label{lem:technical}
Let \((A, \phi^\infty)\) be an infinite sum ring and let \(B \unlhd A\) be an ideal such that \(\phi^\infty(B) \subseteq B\). Then \(B\) is \(\kk\)-equivalent to \(0\). 
\end{lemma}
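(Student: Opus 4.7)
The plan is to run the classical Eilenberg swindle, carefully using the ideal hypothesis so that everything remains inside $B$. Since $\phi^\infty(B) \subseteq B$, the restriction $\phi^\infty|_B \colon B \to B$ is a well-defined bounded algebra endomorphism, and showing $B$ is $\kk$-equivalent to $0$ amounts to showing $[\mathrm{id}_B] = 0$ in the abelian group $\kk(B,B)$.

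The defining identity $\phi^\infty = \mathrm{id}_A \oplus \phi^\infty$ on $A$, evaluated on $b \in B$, reads
\[
\phi^\infty(b) = \beta_1 b \alpha_1 + \beta_2 \phi^\infty(b) \alpha_2,
\]
whose right-hand side lies in $B$ by the ideal assumption. Writing $\iota_i(b) \defeq \beta_i b \alpha_i$, this becomes $\phi^\infty|_B = \iota_1 + \iota_2 \circ \phi^\infty|_B$ as algebra homomorphisms $B \to B$; the $\iota_i$ are multiplicative since $\alpha_i \beta_i = 1$, and the sum remains a homomorphism because of the orthogonality $\alpha_1 \beta_2 = \alpha_2 \beta_1 = 0$, which is a formal consequence of the sum ring axioms (the expansion $\alpha_1 \beta_2 = \alpha_1 (\beta_1 \alpha_1 + \beta_2 \alpha_2) \beta_2 = 2\alpha_1 \beta_2$ forces this).

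I would then transfer this identity into $\kk$ by applying Proposition \ref{prop:M2-inner-endo} with $x = \alpha_i$, $y = \beta_i$, taking the ambient algebra to be $A$ and the subalgebra to be $B$; the hypotheses hold because $\alpha_i \beta_i = 1$, the $\alpha_i, \beta_i$ commute with $\dvr$, and $B$ is an ideal in $A$, so $\alpha_i B, \beta_i B \subseteq B$. This yields $[\iota_i] = [\mathrm{id}_B]$ in $\kk(B,B)$, and the bi-additivity of composition gives $[\iota_2 \circ \phi^\infty|_B] = [\phi^\infty|_B]$. Combined with the standard additivity $[f_1 + f_2] = [f_1] + [f_2]$ for orthogonal algebra homomorphisms at the level of $\kk$ (see below), the identity above translates to
\[
[\phi^\infty|_B] = [\mathrm{id}_B] + [\phi^\infty|_B]
\]
in the abelian group $\kk(B,B)$, and cancellation yields $[\mathrm{id}_B] = 0$ as desired.

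The main obstacle is justifying the additivity step $[f_1 + f_2] = [f_1] + [f_2]$ for two orthogonal algebra homomorphisms $f_1, f_2 \colon B \to B$ (i.e.\ ones with $f_1(b_1)f_2(b_2) = f_2(b_1)f_1(b_2) = 0$). This is a standard feature of all bivariant $K$-theories and should follow here from $\mathcal{M}^{\cont}$-stability of $\kk$: orthogonality turns $b \mapsto \mathrm{diag}(f_1(b), f_2(b))$ into a genuine algebra homomorphism $B \to \mathbb{M}_2(B) \subseteq \mathcal{M}^{\cont}(B)$; the corner embeddings $B \rightrightarrows \mathbb{M}_2(B)$ become identified in $\kk$ by Proposition \ref{prop:M2-inner-endo}, so that upon inverting the $\kk$-equivalence $\mathbb{M}_2(B) \simeq B$ the block-diagonal class decomposes as the sum $[f_1] + [f_2]$. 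Once this is in hand, the remainder of the argument is purely formal manipulation of the sum ring axioms.
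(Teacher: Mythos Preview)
Your proof is correct and follows essentially the same Eilenberg swindle as the paper: both reduce to the identity \(j(\mathrm{id}_B \oplus \phi^\infty|_B) = j(\mathrm{id}_B) + j(\phi^\infty|_B)\) via Proposition~\ref{prop:M2-inner-endo} and then cancel. Your write-up is in fact more careful than the paper's one-line proof, since you explicitly verify the orthogonality \(\alpha_i\beta_j = 0\) (which, incidentally, follows without the characteristic-zero assumption from \(\alpha_1 = \alpha_1(\beta_1\alpha_1 + \beta_2\alpha_2)\), hence \(\alpha_1\beta_2\alpha_2 = 0\), hence \(\alpha_1\beta_2 = \alpha_1\beta_2\alpha_2\beta_2 = 0\)) and sketch the \(\mathbb{M}_2\)-stability argument for additivity of orthogonal homomorphisms that the paper leaves implicit.
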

\begin{proof}
The conditions of Proposition \ref{prop:M2-inner-endo} are satisfied, and we have \(j(\mathrm{id}_B \oplus \phi_{\vert B}^\infty) = j(\mathrm{id}_B) \oplus j(\phi_{\vert B}^\infty)\). Now since \(A\) is an infinite-sum ring, we have \(j(\mathrm{id}_B \oplus \phi_{\vert B}^\infty) = j(\phi_{\vert B}^\infty)\), which shows that \(j(\mathrm{id}_B) = 0\) in \(\kk\) as required. 
\end{proof}

\begin{proposition}\label{prop:delooping}
Let \(A \in \mathsf{Alg}_\dvr^\tf\). Then \(\coma{\Sigma}\) is a delooping of \(A\). That is, we have equivalences \(\Omega \coma{\Sigma}(A) \cong A\)  in \(\kk\).
\end{proposition}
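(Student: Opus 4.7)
The plan is to apply the excision long exact sequence of Theorem \ref{thm:excision-1} to the continuous cone extension
\[
\mathcal{M}^{\mathrm{cont}}(A) \into \coma{\Gamma}(A) \onto \coma{\Sigma}(A)
\]
and compare with the built-in $\mathcal{M}^{\mathrm{cont}}$-stability of $\kk$. The guiding observation is that $\coma{\Gamma}(A)$ should be $\kk$-contractible; once this is established, the six-term sequence collapses to a natural isomorphism
\[
\kk(D, \Omega \coma{\Sigma}(A)) \xrightarrow{\delta} \kk(D, \mathcal{M}^{\mathrm{cont}}(A)) \cong \kk(D, A),
\]
and the desired $\kk$-equivalence $A \cong \Omega \coma{\Sigma}(A)$ then follows by Yoneda.

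The central step is therefore to show $\coma{\Gamma}(A) \cong 0$ in $\kk$. When $A$ is unital, this is immediate from Lemmas \ref{lem:wagorer-sumring} and \ref{lem:technical}: the algebra $\coma{\Gamma}(A)$ is itself an infinite sum ring, and Lemma \ref{lem:technical} applied with $B = \coma{\Gamma}(A)$ as an ideal in itself yields the vanishing. For a general (possibly non-unital) $A$, I would route through the unitalization $A^+ = A \oplus \dvr$: the ideal inclusion $A \unlhd A^+$ induces an ideal inclusion $\coma{\Gamma}(A) \unlhd \coma{\Gamma}(A^+)$, the ambient algebra $\coma{\Gamma}(A^+)$ is an infinite sum ring by Lemma \ref{lem:wagorer-sumring}, and the infinite sum homomorphism $\phi_{A^+}^\infty = \coma{\phi_\dvr^\infty} \hot \mathrm{id}_{A^+}$ preserves the ideal $\coma{\Gamma}(A)$ because $\mathrm{id}_{A^+}$ restricts to the identity on $A$. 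Lemma \ref{lem:technical} then applies verbatim.

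With $\coma{\Gamma}(A) \cong 0$ in hand, excision shows that the boundary $\delta$ above is a $\kk$-isomorphism. The $\mathcal{M}^{\mathrm{cont}}$-stability isomorphism $\mathcal{M}^{\mathrm{cont}}(A) \cong A$ in $\kk$ is a consequence of the fact that the $\kk$-groups in Definition \ref{def:bivariant_K} are defined as colimits over $\mathcal{M}_{\infty}^{\mathrm{cont}}$-stabilizations and that $\mathcal{M}^{\mathrm{cont}}$ absorbs further $\mathbb{M}_n$-tensor factors. Composing the two natural isomorphisms and appealing to Yoneda produces the delooping.

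The main obstacle is the non-unital case of $\coma{\Gamma}(A) \cong 0$, since Lemma \ref{lem:wagorer-sumring} is explicitly stated only for unital algebras. The unitalization step above circumvents this by exploiting that Lemma \ref{lem:technical} requires only an infinite sum ring structure on the ambient algebra together with an invariant ideal, rather than such a structure on the ideal itself.
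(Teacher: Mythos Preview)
Your argument is correct and matches the paper's proof essentially line for line: both establish \(\kk\)-contractibility of \(\coma{\Gamma}(A)\) via the infinite sum ring structure on \(\coma{\Gamma}(A^+)\) and Lemma~\ref{lem:technical}, then feed the continuous cone extension into excision and use \(\mathcal{M}^{\mathrm{cont}}\)-stability plus Yoneda. The only cosmetic difference is that the paper invokes Theorem~\ref{thm:excision-2} (first variable) rather than Theorem~\ref{thm:excision-1} (second variable), which of course yields the same conclusion.
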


\begin{proof}
Lemma \ref{lem:wagorer-sumring} shows that for the unitalisation \(A^+ = A \oplus \dvr\), we have \(\Gamma(A^+)\) is an infinite sum ring. Next, Lemma   \ref{lem:technical} shows that for nonunital \(A\), we have that \(\coma{\Gamma}(A)\) is \(\kk\)-contractible. Now consider the extension \(\mathcal{M}^{\mathrm{cont}}(A) \into \coma{\Gamma}(A) \onto \coma{\Sigma}(A)\). By Theorem \ref{thm:excision-2}, we get that the map \[\delta_D \colon \kk(A,D) \cong \kk(\mathcal{M}_\infty^{\mathrm{cont}}(A), D) \to \kk(\Omega \coma{\Sigma}(A), D)\] is an isomorphism for each \(D \in \Alg_\dvr^\tf\). Setting \(D = A\) yields the desired result.  
\end{proof}


We can now define \(\Z\)-graded \(\kk\)-groups as follows:

\[\kk_n(A,B) = \begin{cases}
\kk(A, \Omega^n(B)) \text{ for } n \geq 0 \\
\kk(\Omega^{-n}(A), B)= \kk(A, \coma{\Sigma}^n(B)) \text{ for }  n \leq 0.
\end{cases}
\]

These are called the \textit{higher analytic \(\kk\)-groups}. They express \(\kk\) as a \(\Z\)-graded category.

\section{The universal property of \(\kk\)}\label{sec:triangulated}

In this section, we formulate the universal property of \(\kk\). Let \((\mathcal{T}, \Omega_\mathcal{T})\) be a triangulated category. 

\begin{definition}\label{def:excisive-functor} 
We say that a functor \(X \colon \Alg_\dvr^\tf \to \mathcal{T}\)  is \textit{excisive} if for any semi-split extension \(A \overset{p}\into B \overset{q}\onto C\) in \(\Alg_\dvr^\tf\), there is a map \(\delta \colon \Omega_{\mathcal{T}}X(C) \to X(A)\) satisfying the following:

\begin{itemize}
\item \(\Omega_\mathcal{T} X(C) \overset{\delta}\to X(A) \overset{X(p)}\to X(B) \overset{X(q)}\to X(C)\) is a distinguished triangle in \(\mathcal{T}\);
\item For a morphism of extensions 
\[
\begin{tikzcd}
A \arrow{r} \arrow{d}{f} & B \arrow{r} \arrow{d} & C  \arrow{d}{g} \\
A' \arrow{r} & B' \arrow{r} & C'
\end{tikzcd}
\]
the following diagram 
\[\begin{tikzcd}
\Omega X(C) \arrow{r}{\delta} \arrow{d}{\Omega X(g)} & X(A) \arrow{d}{X(f)} \\
\Omega X(C') \arrow{r}{\delta'} & X(A')
\end{tikzcd}
\] commutes. 
\end{itemize}
\end{definition}

We call a functor \(X \colon \Alg_\dvr^\tf \to \mathcal{T}\) \textit{dagger homotopy invariant} if it maps the canonical bounded algebra homomorphism \(A \to A \hot \dvr[t]^\updagger\) to an isomorphism.  It is called \textit{matricially stable} (relative to a choice of torsionfree \(\dvr\)-module \(Z\) as in subsection \ref{subsec:stability}) if it maps the canonical map \(A \to \mathcal{M}_Z^\updagger(A)\) into an isomorphism. Recall that we are primarily interested in the stabilisation relative to \(Z = \coma{\bigoplus_{n \in \N} \dvr}\), which yields the matrix algebra \(\mathcal{M}^{\mathrm{cont}}\).

By Proposition \ref{prop:loop-fully-faithful} and \ref{prop:delooping}, the loop functor \(\Omega \colon \kk \to \kk\) is an autoequivalence. As in the algebraic and complex topological case, we similarly have the following:

\begin{theorem}\label{thm:kk-triangulated}
The category \(\kk\) is a triangulated category whose distinguished triangles are diagrams isomorphic to those of the form \[\Omega(B) \to P_f \to A \overset{f}\to B,\] with auto-equivalence given by the loop functor \(\Omega \colon \kk \to \kk\).
\end{theorem}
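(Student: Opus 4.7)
The plan is to adapt the arguments of \cite{Cortinas-Thom:Bivariant_K}*{Section 6.5} (modelled on Cuntz's approach for complex bivariant $K$-theory) to our bornological setting. All the required ingredients are in place: Proposition \ref{prop:loop-fully-faithful}, Proposition \ref{prop:delooping} and Lemma \ref{lem:noncommutative-loop} together imply that $\Omega$ is an autoequivalence of $\kk$, with pseudo-inverse $\coma{\Sigma}$; this will serve as the shift functor. Excision (Theorems \ref{thm:excision-1} and \ref{thm:excision-2}) will supply the long exact sequences, and Proposition \ref{prop:classifying-map-functorial} will give the naturality needed at every step.

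The class of triangles is closed under isomorphism by definition. The trivial triangle $\Omega(A) \to P_{\mathrm{id}_A} \to A \overset{\mathrm{id}}\to A$ is distinguished because $P_{\mathrm{id}_A}$ is identified with the cone-type algebra $P(A) = tA\gen{t}^\updagger$, which is dagger-contractible and hence $\kk$-trivial. For any bounded homomorphism $f \colon A \to B$, the mapping path extension \eqref{eq:mapping-path} produces the distinguished triangle $\Omega(B) \to P_f \to A \to B$, so every morphism extends to a distinguished triangle. The fill-in axiom follows from the functoriality of the mapping path construction: given a $\kk$-commutative square relating $f \colon A \to B$ with $f' \colon A' \to B'$, the pullback description of $P_f$ together with Proposition \ref{prop:classifying-map-functorial} produces the dashed vertical morphism $P_f \to P_{f'}$, and compatibility with the connecting map is built into the naturality clause of the excision sequences.

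The rotation axiom is established by iteratively applying the mapping path construction to $P_f \to A$, producing successive semi-split extensions whose classifying maps, via Lemma \ref{lem:noncommutative-loop}, identify the shifted objects with the expected loop algebras. More precisely, one shows that $P_{p_f}$ is $\kk$-equivalent to $\Omega(B)$, where $p_f \colon P_f \to A$ is the canonical projection, so that the rotated triangle $P_f \to A \to B \to \Omega^{-1}(P_f)$ is isomorphic in $\kk$ to a mapping path triangle. Here one uses that, up to a sign, the connecting map of excision coincides with the classifying map of the universal extension, a fact that carries over from \cite{Cortinas-Thom:Bivariant_K} since its proof depends only on the universal property of $\jens$ and not on the specifics of the coefficient ring.

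The main obstacle is the octahedral axiom. Given a composition $A \overset{f}\to B \overset{g}\to C$, one forms the three mapping path extensions for $f$, $g$ and $g \circ f$, and must produce a distinguished triangle $P_f \to P_{gf} \to P_g$ fitting compatibly with all the other triangles and connecting maps. This is done by applying Theorem \ref{thm:excision-1} to the three extensions and performing the standard diagram chase. Crucially, all the auxiliary extensions obtained as pullbacks and tensor products are again semi-split in $\Alg_\dvr^\tf$, thanks to Lemma \ref{lem:tensor-exact} and Lemma \ref{lem:4}, so excision applies throughout and the algebraic argument of \cite{Cortinas-Thom:Bivariant_K}*{Theorem 6.5.2} adapts with only bookkeeping modifications.
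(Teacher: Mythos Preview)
Your proposal is correct and follows exactly the same approach as the paper, which simply states that the proof of \cite{Cortinas-Thom:Bivariant_K}*{Section 6.5} carries over mutatis mutandis. You have merely expanded on what that deferral entails, sketching how each triangulated category axiom is verified using the ingredients already established (excision, the loop autoequivalence, and naturality of classifying maps).
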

\begin{proof}
The proof of \cite{Cortinas-Thom:Bivariant_K}*{Section 6.5} carries over mutatis-mutandis.
\end{proof}


\begin{example}\label{ex:kk-everything}
By construction, the functor \(j \colon \Alg_\dvr^\tf \to \kk\) is a dagger homotopy invariant, \(\mathcal{M}^{\mathrm{cont}}\)-stable functor. For excision, let \(A \into B \onto C\) be a semi-split extension and let \(\gamma \colon \jens C \to A\) be the classifying map. Then using the \(\kk\)-inverse of \(\varrho_C \colon \jens C \to \Omega C\) from Lemma \ref{lem:noncommutative-loop}, we get a collection \(\gamma \circ \varrho_C^{-1} \in \kk(\Omega C, A)\) as per the requirements of Definition \ref{def:excisive-functor}.
\end{example}

Adapting the proof of \cite{Cortinas-Thom:Bivariant_K}*{Theorem 6.6.2}, we have the following:

\begin{theorem}\label{thm:kk-initial}
Let \(X \colon \Alg_\dvr^\tf \to \mathcal{T}\) be a dagger homotopy invariant, \(\mathcal{M}^\mathrm{cont}\)-stable and excisive functor into a triangulated category. Then there is a unique triangulated functor \(F \colon \kk \to \mathcal{T}\) such that the following diagram 
\[
\begin{tikzcd}
\Alg_\dvr^\tf \arrow{r}{X} \arrow{d}{j} & \mathcal{T} \\
\kk \arrow[ur, swap, "F"] &
\end{tikzcd}
\] of functors commutes. 
\end{theorem}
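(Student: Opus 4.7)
The plan is to adapt the argument of \cite{Cortinas-Thom:Bivariant_K}*{Theorem 6.6.2} to the nonarchimedean bornological setting, building $F$ out of the formal properties imposed on $X$ together with the constructions of Section~\ref{sec:definition-kk}. On objects I set $F(A) := X(A)$, so the work is to define $F$ on morphisms and then verify that the assignment is well-defined, functorial, triangulated, and unique.

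First I would show that $X$ converts two canonical families of semi-split extensions into suspension data. Applying $X$ to the universal tensor algebra extension $\jens(A) \into \tens(A) \onto A$ and combining matrix stability with dagger homotopy invariance via an Eilenberg-swindle on $\tens(A)$ (the same argument that shows $\coma{\Gamma}(A)$ is $\kk$-contractible in Proposition~\ref{prop:delooping}) yields $X(\tens A) = 0$, so excision provides a natural isomorphism $X(\jens A) \cong \Omega_\mathcal{T} X(A)$. Applying $X$ to the loop extension $A^{\mathcal{S}^1} \into \mathcal{P}(A) \onto A$ and using that $\mathcal{P}(A)$ is dagger-contractible gives $X(A^{\mathcal{S}^1}) \cong \Omega_\mathcal{T} X(A)$; iterating produces $X(A^{\mathcal{S}^n}) \cong \Omega_\mathcal{T}^n X(A)$ and $X(\jens^n A) \cong \Omega_\mathcal{T}^n X(A)$. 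Matrix stability additionally gives $X(\mathcal{M}_\infty^{\mathrm{cont}}(B^{\mathcal{S}^n})) \cong X(B^{\mathcal{S}^n})$. For a representative $\alpha \colon \jens^n A \to \mathcal{M}_\infty^{\mathrm{cont}}(B^{\mathcal{S}^n})$ of a class in $\kk(A,B)$ this yields a morphism
\[
\Omega_\mathcal{T}^n X(A) \cong X(\jens^n A) \xrightarrow{X(\alpha)} X(\mathcal{M}_\infty^{\mathrm{cont}}(B^{\mathcal{S}^n})) \cong \Omega_\mathcal{T}^n X(B)
\]
in $\mathcal{T}$, which by invertibility of $\Omega_\mathcal{T}$ on a triangulated category defines the morphism $F([\alpha]) \colon X(A) \to X(B)$.

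The well-definedness of $F$ then requires several checks: homotopic representatives induce the same morphism (immediate from dagger homotopy invariance of $X$), and passing from $\alpha$ to the next structure map in the colimit $\kk(A,B) = \varinjlim [\jens^n A, \mathcal{M}_\infty^{\mathrm{cont}}(B^{\mathcal{S}^n})]$ produces the same morphism, which reduces to the naturality of the excision isomorphism $X(\jens(-)) \cong \Omega_\mathcal{T} X(-)$ with respect to the classifying map $\varrho$ of the loop extension. Functoriality, namely $F([\beta] \star [\alpha]) = F([\beta]) \circ F([\alpha])$, follows from a direct comparison with the composition rule in Theorem~\ref{lem:composition}, using the naturality of the transformations $\gamma_A^{m,n}$ provided by Corollary~\ref{cor:universal-simplicial-extension}. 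Once $F$ is a functor, Theorem~\ref{thm:kk-triangulated} identifies every distinguished triangle in $\kk$ with one of the form $\Omega B \to P_f \to A \xrightarrow{f} B$; excision of $X$ applied to the mapping path extension $\Omega B \into P_f \onto A$ shows that $F$ maps this to a distinguished triangle in $\mathcal{T}$, hence $F$ is triangulated.

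The main obstacle is the verification of functoriality: the composition rule $[g] \star [f] = [g^{\mathcal{S}^n} \circ (-1)^{mn} \gamma_B^{m,n}] \star [\jens^m(f)]$ of Theorem~\ref{lem:composition} intertwines applications of $\jens^m$ with $(-)^{\mathcal{S}^n}$, and one must show that the two chains of excision isomorphisms $X(\jens^m(-)) \cong \Omega_\mathcal{T}^m X(-)$ and $X((-)^{\mathcal{S}^n}) \cong \Omega_\mathcal{T}^n X(-)$ commute coherently with the classifying maps $\varrho$ and $\gamma$; this coherence, established via the naturality square at the end of Corollary~\ref{cor:universal-simplicial-extension} together with the comparison $\gamma_A \circ \jens(\varrho_A) = -\varrho_{\jens(A)}$ of Equation~\eqref{eq:well-defined2}, accounts for the sign $(-1)^{mn}$ and is precisely the nonarchimedean analogue of the bookkeeping in \cite{Cortinas-Thom:Bivariant_K}*{Section~6.6}. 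Uniqueness of $F$ is then automatic: any triangulated functor $F'$ with $F' \circ j = X$ is forced to agree with $F$ on the image of $j$, and the construction of $\kk(A,B)$ as a colimit of homotopy classes $[\jens^n A, \mathcal{M}_\infty^{\mathrm{cont}}(B^{\mathcal{S}^n})]$ together with the identifications $F'(\Omega B) \cong \Omega_\mathcal{T} F'(B)$ and $F'(\mathcal{M}_\infty^{\mathrm{cont}}(B)) \cong F'(B)$ imposed by the triangulated structure and matrix stability determines $F'$ on every morphism.
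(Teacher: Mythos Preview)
Your proposal follows the same route the paper indicates---adapting \cite{Cortinas-Thom:Bivariant_K}*{Theorem~6.6.2}---and the overall architecture (identify $X(\jens^n A)$ and $X(B^{\mathcal{S}^n})$ with $\Omega_\mathcal{T}^n$ of the base via excision, define $F$ on representatives, check compatibility with the structure maps of the colimit and with the composition formula of Theorem~\ref{lem:composition}, then verify triangulatedness on mapping path triangles) is correct and matches what the paper has in mind.

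One point needs correction. You justify $X(\tens A)=0$ by ``an Eilenberg-swindle on $\tens(A)$ (the same argument that shows $\coma{\Gamma}(A)$ is $\kk$-contractible in Proposition~\ref{prop:delooping})''. This does not work: $\tens A$ is not an infinite sum ring, so no swindle is available, and matrix stability plays no role here. Rather, $\tens A$ is directly dagger-homotopy contractible: the scaling map $a_1\otimes\cdots\otimes a_n\mapsto t^n\,a_1\otimes\cdots\otimes a_n$ is a bounded algebra homomorphism $\tens A\to\tens(A)[t]\subseteq\tens(A)\hot\dvr[t]^\updagger$ connecting the identity to the zero map. This is precisely the fact the paper invokes in the proof of Lemma~\ref{lem:noncommutative-loop} (``the algebras $\tens A$ and $PA$ are contractible''), and it requires only the dagger homotopy invariance of $X$. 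With this correction your argument goes through unchanged.
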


The following are some important applications:

\begin{example}[Chern characters into periodic cyclic homology]
We first start with the Cuntz-Quillen pro-supercomplex 
\[\mathbb{HP} \colon \mathsf{Alg}(\mathsf{CBor}_\dvf) \to \mathsf{Der}(\overleftarrow{\mathsf{Ind}(\mathsf{Ban}_\dvf))})\] from the category of complete bornological \(\dvf\)-algebras into the derived category of the quasi-abelian category of pro-systems of inductive systems of Banach \(\dvf\)-vector spaces. The latter category is a triangulated category that arises as the homotopy category of a model category; this model category structure is studied in (\cite{mukherjee2022quillen}). This functor is dagger homotopy invariant (by \cite{Cortinas-Meyer-Mukherjee:NAHA}*{Theorem 4.6.1}), \(\mathcal{M}^{\cont}\)-stable and excisive (by \cite{Meyer:HLHA}*{Theorem 4.34, Section 4.3}). Since tensoring a complete torsionfree bornological \(\dvr\)-algebra with \(\dvf\) is an exact functor, the functor \[\Alg_\dvr^\tf \ni A \mapsto \mathbb{HP}(A \otimes \dvf) \in \mathsf{Der}(\overleftarrow{\mathsf{Ind}(\mathsf{Ban}_\dvf))})\] still satisfies these properties. In fact, all these properies hold for \emph{bivariant} periodic cyclic homology \(\HP_n(A,B) \defeq \Hom_{\mathsf{Der}(\overleftarrow{\mathsf{Ind}(\mathsf{Ban}_\dvf))})}(\mathbb{HP}(A), \mathbb{HP}(B)[n])\), so that by Theorem \ref{thm:kk-initial}, we obtain a triangulated functor \(\kk \to \mathsf{Der}(\overleftarrow{\mathsf{Ind}(\mathsf{Ban}_\dvf))})\) and group homomorphisms 
\[\mathrm{ch}_n \colon \kk_n(A,B) \to \HP_n(A \otimes \dvf,B \otimes \dvf)\] for all \(n \in \Z\). Setting \(A = \dvr\), we get \(\mathrm{ch}_n \colon \kk_n(\dvr, B) \to \HP_n(B \otimes \dvf)\). By \cite{Cortinas-Cuntz-Meyer-Tamme:Nonarchimedean}*{Equation 14}, when \(B\) is the dagger completion of a smooth commutative \(\dvr\)-algebra lifting of a smooth commutative \(\resf\)-algebra, we get Chern characters \(\kk_*(\dvr,B) \to \bigoplus_{j \in \Z} H_{\mathrm{rig}}^{* + 2j}(B/\dvgen B, \dvf)\), where the right hand side is periodified rigid cohomology.   
\end{example}

\begin{example}[Analytic Chern characters]\label{ex:analytic-Chern}
Now consider the homology theory defined in \cite{Cortinas-Meyer-Mukherjee:NAHA} \[\mathbb{HA} \colon \mathsf{Alg}_\dvr^\tf \to \mathsf{Der}(\overleftarrow{\mathsf{Ind}(\mathsf{Ban}_\dvf)}),\] which again satisfies dagger homotopy invariance, excision and \(\mathcal{M}^{\mathrm{cont}}\)-stability. So Theorem \ref{thm:kk-initial} again produces a triangulated functor \(\kk \to \mathsf{Der}(\overleftarrow{\mathsf{Ind}(\mathsf{Ban}_\dvf)})\) and group homomorphisms \[\mathrm{ch}_n \colon \kk_n(A,B) \to \HA_n(A,B),\] for each \(n \in \Z\). We call the group homomorphisms \(\mathrm{ch}_n\) \textit{analytic Chern characters}. Since the left hand side is an \(\dvf\)-vector space, we get \(\dvf\)-linear maps \[\mathrm{ch}_n \colon \kk_n(A,B) \otimes_\Z \dvf \to \HA_n(A,B)\] for each \(n\). When \(A = \dvr\), we get group homomorphisms \(\mathrm{ch}_n \colon \kk_n(\dvr, B) \to \HA_n(B)\). Now suppose \(B\) is fine mod \(\dvgen\) as in Definition \ref{def:fine-mod-p} - this happens when a bornological algebra is \textit{nuclear} in the sense of \cite{Meyer-Mukherjee:HL}*{Definition 3.1}. We then have \[\kk_n(\dvr, B) \otimes_\Z \dvf \to \mathrm{HL}_n(B) \cong \HA_n(B) \cong \HA_n(B/\dvgen B),\] where the right hand side is the analytic cyclic homology defined in \cite{Meyer-Mukherjee:HA}. In other words, in interesting cases, the image of the analytic Chern character depends only on the reduction mod \(\dvgen\) of the original algebra. In the next section, we will compare \(\kk_n(\dvr, B)\) with a version of \textit{analytic \(K\)-theory} defined in \cite{MR4012551} for complete, bornologically torsionfree \(\dvr\)-algebras. 
\end{example}

\begin{example}[Analytic exterior product]
Let \(B\) be a fixed complete, torsionfree bornological \(\dvr\)-algebra. Then the functor \( - \hot B \colon \Alg_\dvr^\tf \to \kk\) is excisive, homotopy invariant and stable. By the universal property of \(j \colon \Alg_\dvr^\tf \to \kk\), there is a unique extension to a triangle functor, namely \(- \hot B \colon \kk \to \kk\). As a consequence, there is an associative product 

\[
\kk(A_1, B_1) \otimes \kk(A_2, B_2) \to \kk(A_1 \hot A_2, B_1 \hot B_2)
\] for \(A_i\), \(B_j\), \(i,j = 1,2\) in \(\Alg_\dvr^\tf\). 

\end{example}

\subsection{Bivariant algebraic and analytic \(K\)-theories as stable \(\infty\)-categories}

It is by now a well-known folklore that several triangulated categories arise as homotopy categories of stable \(\infty\)-category. In this subsection, we show that the bivariant algebraic \(K\)-theory of \cite{Cortinas-Thom:Bivariant_K}, and the analytic version under consideration in this article also arise this way. Our approach is along the lines of \cite{land2018}*{Section 3}, which shows that Kasparov's \(KK\)-theory arises as the homotopy category of the simplicial localisation of separable \(C^*\)-algebras at the \(KK\)-equivalences. To adapt their result, we first fix some set-theoretic conventions. Let \(\kappa\) be an inaccessible cardinal and \(\mathcal{U}(\kappa)\) its associated Grothendieck universe. By definition, such a cardinal is larger than \(\aleph_k\) for every \(k\). We say that a category \(\mathsf{C}\) is \textit{\(\kappa\)-small} if its collection of objects and morphisms between any two objects belong to \(\mathcal{U}(\kappa)\). 

Now let \(\Alg_{\kappa}\) and \(\Alg_{\dvr,\kappa}^\tf\) be algebras and complete, torsionfree bornological \(\dvr\)-algebras with generating sets of size at most \(\kappa\). These categories are \(\kappa\)-small. Furthermore, for any two \(\kappa\)-small algebras (respectively, complete, torsionfree bornological algebras) \(A\) and \(B\), the set \([A,B]\) of polynomial (respectively, dagger) homotopy classes of  algebra homomorphisms is \(\kappa\)-small. Consequently, since the category of \(\kappa\)-small sets has \(\kappa\)-small (and hence \(\aleph_0\))-colimits, \(\kk(A,B)\) is \(\kappa\)-small. Denote by \(kk_{\kappa}\) and \(\kk_{\kappa}\) the full subcategories of \(\kappa\)-small algebras and complete, torsionfree bornological algebras, with morphisms given by \(kk_{\kappa}(A,B) = kk(A,B)\) and \(\kk_{\kappa}(A,B) = \kk(A,B)\). The same construction as in \cite{Cortinas-Thom:Bivariant_K}  yields a functor \(\Alg_\kappa \to kk_\kappa\) that is universal for the same properties as \(kk\). Let \(kk_{\infty,\kappa}\) and \(\kk_{\infty,\kappa}\) denote the localisations of the \(\infty\)-categories \(N\Alg_\kappa\) and \(N\Alg_{\dvr,\kappa}^\tf\) of \(\kappa\)-small algebras (resp. complete, torsionfree bornological \(\dvr\)-algebras) at the \(kk\) (resp. \(\kk\))-equivalences between \(\kappa\)-small algebras. Here \(N\) denotes the nerve of a category. 

\begin{proposition}\label{prop:stable-infinity-kk}
The \(\infty\)-categories \(kk_{\infty,\kappa}\) and \(\kk_{\infty,\kappa}\) are \(\kappa\)-small, stable \(\infty\)-categories. Their homotopy categories are the full subcategories \(kk_{\kappa}\) and \(\kk_{\kappa}\).
\end{proposition}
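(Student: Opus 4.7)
The plan is to adapt the approach of Land-Nikolaus \cite{land2018}*{Section 3} to our setting, with the universal property of $\kk$ from Theorem \ref{thm:kk-initial} (and its algebraic counterpart from \cite{Cortinas-Thom:Bivariant_K}) replacing Higson's universal property of Kasparov's $KK$-theory. Since the two cases $kk_{\infty,\kappa}$ and $\kk_{\infty,\kappa}$ proceed identically, I describe only the latter. The $\kappa$-smallness assertion follows from the general fact that the $\infty$-categorical localisation of a $\kappa$-small $\infty$-category at a $\kappa$-small class of morphisms stays within $\mathcal{U}(\kappa)$; this is most easily verified using the Dwyer-Kan hammock model, whose simplicial mapping sets between $\kappa$-small objects are built from zig-zags indexed by $\kappa$-small posets.

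Next comes the identification of the homotopy category. The functor $j \colon \Alg_{\dvr,\kappa}^\tf \to \kk_\kappa$ inverts $\kk$-equivalences, so by the universal property of $\infty$-localisation it factors through a functor $\bar{j} \colon \mathrm{Ho}(\kk_{\infty,\kappa}) \to \kk_\kappa$. To show $\bar{j}$ is an equivalence, every class in $\kk(A,B)$ represented by $\alpha \colon \jens^n A \to \mathcal{M}_\infty^{\cont}(B^{\mathcal{S}^n})$ is realised as a zig-zag
\[
A \xleftarrow{\sim} \jens^n A \xrightarrow{\alpha} \mathcal{M}_\infty^{\cont}(B^{\mathcal{S}^n}) \xleftarrow{\sim} B
\]
in $N\Alg_{\dvr,\kappa}^\tf$. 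The backward arrows are $\kk$-equivalences by Lemma \ref{lem:noncommutative-loop}, Proposition \ref{prop:delooping}, and $\mathcal{M}^{\cont}$-stability, hence become invertible in $\mathrm{Ho}(\kk_{\infty,\kappa})$. Fullness of $\bar{j}$ follows, and faithfulness reduces to Theorem \ref{lem:composition}, which shows that composition of zig-zags in the localisation matches the $\kk$-composition.

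For stability, I apply Lurie's criterion: a pointed $\infty$-category with finite limits is stable if and only if the loop functor is an equivalence. The zero algebra is a zero object in $\kk_{\infty,\kappa}$, being both initial and terminal in $\Alg_{\dvr,\kappa}^\tf$. The mapping path construction \eqref{eq:mapping-path} together with the excision Theorems \ref{thm:excision-1} and \ref{thm:excision-2} shows that every morphism admits both a fiber and a cofiber, and iterating this yields all finite limits and colimits. The loop functor is an equivalence on $\mathrm{Ho}(\kk_{\infty,\kappa}) \simeq \kk_\kappa$ by Proposition \ref{prop:loop-fully-faithful} and Proposition \ref{prop:delooping}.

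The main obstacle will be lifting this homotopy-categorical equivalence to an $\infty$-categorical one. Land-Nikolaus rely on a compatible Quillen model structure on separable $C^*$-algebras, which is not currently available on $\Alg_{\dvr,\kappa}^\tf$. Instead, one must argue directly via hammocks that the simplicial enrichment of Section \ref{sec:analytic-homotopy} computes the $\infty$-categorical mapping spaces of $\kk_{\infty,\kappa}$; once this is in place, Theorem \ref{thm:mapping space} identifies their higher homotopy groups with $\kk_n(A,B)$, and the fact that $\Omega$ induces isomorphisms on each $\kk_n(A,B)$ promotes the homotopy-categorical equivalence to an equivalence of $\infty$-categories, completing the verification of stability.
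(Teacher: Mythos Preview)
Your overall strategy matches the paper's: adapt Land--Nikolaus \cite{land2018}*{Section~3} using the universal property of $\kk$ in place of Higson's. You also correctly isolate the central difficulty, namely that Land--Nikolaus lean on a model-categorical structure that is not obviously present on $\Alg_{\dvr,\kappa}^\tf$. However, your proposed resolution --- arguing ``directly via hammocks'' that the simplicial enrichment of Section~\ref{sec:analytic-homotopy} computes the mapping spaces of the localisation --- is not carried out, and Theorem~\ref{thm:mapping space} only identifies $\pi_1$ of the $\HOM$-space with $[A,B^{\mathcal{S}^1}]$, not with $\kk_n(A,B)$; the latter involves the colimit over $\jens^n$ and matrix stabilisation, so the identification you need is not supplied by that theorem. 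Similarly, your verification that $\kk_{\infty,\kappa}$ has finite limits appeals to the mapping path construction and excision, but those are statements about the triangulated category $\kk_\kappa$, not about the $\infty$-localisation; knowing the homotopy category has fibres does not by itself give $\infty$-categorical pullbacks.

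The paper fills this gap differently and more economically: rather than a full Quillen model structure, it observes that $(\Alg_\kappa,\mathfrak{W})$ --- with $\mathfrak{W}$ the class of maps inverted by every homotopy invariant, matricially stable, excisive functor, and fibrations the linearly split surjections --- is a \emph{fibration category} in the sense of Brown \cite{brown1973abstract}. Garkusha's results \cite{garkusha2010algebraic}*{Theorems~9.7, 9.8} then identify the derived category of this fibration category with $kk_\kappa$ via the universal property. A fibration category structure is exactly the input Land--Nikolaus's Proposition~3.3 requires (their argument uses pullbacks along fibrations and factorisations, not the full Quillen axioms), so once this is in place the rest of \cite{land2018}*{Proposition~3.3} carries over verbatim. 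You should replace the hammock sketch with this fibration-category argument.
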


\begin{proof}
We only provide the argument for the category \(\Alg_\kappa\); the proof remains the same for the category \(\Alg_{\dvr,\kappa}^\tf\). Since the category of algebras \(\Alg_\kappa\)  is \(\kappa\)-small, so is \(N\Alg_\kappa\), and therefore so is the localisation \(\mathsf{kk}_{\infty,\kappa}\). To show that \(kk_{\infty,\kappa}\) is stable, we need to show that it is pointed, has all finite limits, and the loop functor \(\Omega \colon kk_{\infty,\kappa} \to kk_{\infty,\kappa}\) is an equivalence. 

Let \(\mathfrak{W}\) be the collection of homomorphisms \(f \colon A \to B\) between \(\kappa\)-small algebras, such that \(X(f)\) is an isomorphism for all homotopy invariant, matricially stable and excisive theories \(X \colon \mathsf{Alg}_\kappa \to \mathcal{T}\) with values in a triangulated category. Then the category \(\Alg_\kappa\) with weak equivalences given by \(\mathfrak{W}\), and  fibrations by \(\Z\)-linearly split surjections is a fibration category in the sense of \cite{brown1973abstract}. By \cite{garkusha2010algebraic}*{Theorem 9.7, 9.8}, there is a universal functor \(\mathsf{Alg}_\kappa \to \mathcal{D}\) to the derived category of the fibration category \((\Alg_\kappa, \mathfrak{W})\) that is constructed to be homotopy invariant, excisive and matricially stable. In particular, \(\mathfrak{W}\) is non-empty. Finally, by the universal property of \(kk_\kappa\), there is an equivalence of triangulated categories between \(\mathcal{D}\) and \(kk_\kappa\). The proof of \cite{land2018}*{Proposition 3.3} now goes through mutatis-mutandis.
\end{proof}

\section{Analytic \(K\)-theories for bornological \(\dvr\)-algebras}\label{sec:analytic-K}

In this section, we recall various \(K\)-theoretic constructions in the nonarchimedean setting. Here there are several different choices, depending on the homotopy type and the kind of matricial stability that is desired. For the benefit of the reader, we first list the two \(K\)-theories that are most relevant from the perspective of this article:

\begin{theorem*}
For each \(n\), there are two additive, abelian group valued functors \(\tilde{K}_n^\an, \tilde{K}_n^{\an, \updagger} \colon \Alg_\dvr^\tf \to \mathsf{Ab}\), called the \textit{stabilised} and \textit{overconvergent stabilised analytic \(K\)-theory} groups. The functors \(\tilde{K}_n^\an\) satisfy homotopy invariance for \(\coma{\dvr[t]}\)-homotopies, while \(\tilde{K}_n^{\an, \dagger}\) satisfy dagger homotopy invariance. Finally, both these functors satisfy stability for the matrix algebra \(\mathcal{M}^\cont\), and excision for semi-split extensions of complete, torsionfree bornological algebras. 
\end{theorem*}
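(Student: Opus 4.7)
The plan is to construct these functors explicitly as homotopy groups of (eventually nonconnective) spectra built from the simplicial rings introduced in Section~\ref{sec:analytic-homotopy}, and then to verify each of the three formal properties in turn. For a complete, torsionfree bornological \(\dvr\)\nobreakdash-algebra~\(A\) I would set
\[
  \mathsf{K}^{\an,\updagger}(A) \defeq \mathsf{BGL}^+\bigl(A\gen{\Delta^\bullet}^\updagger\bigr), \qquad
  \mathsf{K}^{\an}(A) \defeq \mathsf{BGL}^+\bigl(A\gen{\Delta^\bullet}\bigr),
\]
where the second simplicial ring uses the closed unit disc \(\coma{\dvr[t]}\) in place of \(\dvr[t]^\updagger\). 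These give connective spectra; for the stabilised theories I would then put \(\tilde{K}_n^{\an,\updagger}(A) \defeq K_n^{\an,\updagger}(\mathcal{M}^{\cont}(A))\) and analogously for~\(\tilde{K}_n^{\an}\), for \(n \ge 0\). To extend to negative degrees I would iterate the Banach algebraic suspension \(\coma{\Sigma} = \coma{\Gamma}/\mathcal{M}^{\cont}\): set \(\tilde{K}_n^{\an,\updagger}(A) \defeq \tilde{K}_0^{\an,\updagger}\bigl(\coma{\Sigma}^{-n}(A)\bigr)\) for \(n<0\), and check that this is consistent with the connective groups using the continuous cone extension and the infinite sum ring structure on \(\coma{\Gamma}\) from Lemma~\ref{lem:wagorer-sumring}. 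Additivity (for finite direct products of algebras) is inherited from \(\mathsf{BGL}^+\).

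For homotopy invariance I would use the weak contractibility of \(\dvr\gen{\Delta^\bullet}^\updagger\) established in Lemma~\ref{lem:3}, which tensors up to a weak equivalence \(A\gen{\Delta^\bullet}^\updagger \weq A\gen{\Delta^1}^\updagger\gen{\Delta^\bullet}^\updagger\) via Lemma~\ref{lem:tensor-exact}; applying \(\mathsf{BGL}^+\) then turns the canonical inclusion \(A\to A\gen{\Delta^1}^\updagger\) into a weak equivalence of spectra. The identical argument with \(\coma{\dvr[t]}\) in place of \(\dvr[t]^\updagger\) handles the Banach disc version. The \(\mathcal{M}^{\cont}\)\nobreakdash-stability is wired in by construction: since \(\mathcal{M}^{\cont}\hot \mathcal{M}^{\cont}\cong \mathcal{M}^{\cont}\) (as matricial pairs, with the bijection \(\N\times\N\cong\N\)), the standard inclusion \(A\to\mathcal{M}^{\cont}(A)\) is \(\kk\)\nobreakdash-invariant after a further stabilisation, and this together with the matrix-homotopy machinery (Lemma~\ref{lem:important-matrix} and Proposition~\ref{prop:M2-inner-endo}) implies that \(\mathcal{M}^{\cont}(A)\to\mathcal{M}^{\cont}(\mathcal{M}^{\cont}(A))\) induces an isomorphism on homotopy groups of \(\mathsf{BGL}^+\) of the corresponding simplicial rings.

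The substantive work is excision. Given a semi-split extension \(A\injto B\onto C\) in \(\Alg_\dvr^\tf\), Lemma~\ref{lem:tensor-exact} guarantees that the induced maps \(A\gen{\Delta^n}^\updagger \injto B\gen{\Delta^n}^\updagger \onto C\gen{\Delta^n}^\updagger\) form a semi-split extension at each simplicial level, and similarly after smashing with \(\mathcal{M}^{\cont}\) by the same lemma applied to the (flat) Banach module \(\mathcal{M}^{\cont}\). To promote this to a long exact sequence in \(\tilde{K}^{\an,\updagger}\) I would follow the Wagoner strategy used in the algebraic case: write \(\coma{\Gamma}(A)\) as an infinite sum \(\dvr\)\nobreakdash-algebra (Lemma~\ref{lem:wagorer-sumring}), deduce via Lemma~\ref{lem:technical} that the cone \(\coma{\Gamma}(-)\) is contractible for the stabilised theory, and use the continuous cone extension \(\mathcal{M}^{\cont}(A) \injto \coma{\Gamma}(A) \onto \coma{\Sigma}(A)\) as a functorial model for the suspension. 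The fibration sequence of \(\mathsf{BGL}^+\)\nobreakdash-spectra associated to a semi-split extension, combined with the Wagoner contractibility of \(\coma{\Gamma}\), then yields the connecting map \(\delta\colon\tilde{K}_{n+1}^{\an,\updagger}(C)\to \tilde{K}_n^{\an,\updagger}(A)\) and the Mayer--Vietoris long exact sequence.

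The main obstacle I foresee is precisely this excision step: the bare Karoubi--Villamayor construction \(\mathsf{BGL}^+\bigl((-)\gen{\Delta^\bullet}^\updagger\bigr)\) is \emph{not} excisive on nose, and one really needs the \(\mathcal{M}^{\cont}\)\nobreakdash-stabilisation (via the infinite sum ring \(\coma{\Gamma}\)) to flatten out relative phantom \(K\)\nobreakdash-classes and produce a genuine cofibre sequence of spectra. The verification therefore hinges on checking that tensoring a semi-split extension by \(\mathcal{M}^{\cont}\) preserves semi-splitness (immediate from the linear section) and that the induced relative \(\mathsf{BGL}^+\) is contractible -- the point where the Bass--Wagoner cone argument is genuinely used. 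The remaining bookkeeping (compatibility between the connective and nonconnective delooping, and naturality of~\(\delta\)) is routine given these ingredients.
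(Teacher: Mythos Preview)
Your overall strategy---define connective spectra via \(\mathsf{BGL}^+\) of simplicial rings, stabilise by \(\mathcal{M}^{\cont}\), and deloop via the continuous cone extension---matches the paper's, and the paper does eventually express the groups as homotopy groups of such spectra (Theorems~\ref{thm:Banach-KH} and~\ref{thm:spectrum-homotopy-overconvergent}). The substantive difference is the order of operations and, consequently, the excision argument. The paper \emph{first} defines the groups as colimits
\[
  K_n^{\an}(A) \defeq \varinjlim_m K_0\bigl(\coma{\Sigma}^m\,\coma{\Omega}^{\,n+m}(A)\bigr)
\]
(Definition~\ref{def:KH-theory}) and proves all four properties at the level of \(K_0\). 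For excision this is the key manoeuvre (Lemma~\ref{lem:nonpostive-K-properties}): one tensors a semi-split extension by \(\coma{\Sigma}^n\) and \(\mathcal{M}^{\cont}\), applies the classical \(K_1\text{--}K_0\) six-term sequence---which holds for \emph{any} ring extension, no spectral input needed---and uses the infinite sum ring structure on \(\coma{\Gamma}\) only to identify \(K_1(\coma{\Sigma}(\tilde{D})) \cong K_0(\mathcal{M}^{\cont}(\tilde{D}))\), so that the pieces splice into a long exact sequence under the colimit.

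Your proposal instead attempts to produce a cofibre sequence of \(\mathsf{BGL}^+\)-spectra directly. You correctly flag this as the obstacle, but the ``Bass--Wagoner cone argument'' you invoke does not by itself establish that the relative \(\mathsf{BGL}^+\) is contractible for a stabilised semi-split extension; that is precisely the statement of excision, and Wagoner's trick only shows that \(\coma{\Gamma}(-)\) has vanishing invariants, not that the fibre is correct. The paper's colimit-of-\(K_0\) definition is what converts the cone contractibility into an honest long exact sequence, and this reduction is the missing step in your outline. (Homotopy invariance is also handled more directly in the paper, via the scaling homotopy \(\beta(s,t)=\alpha(st)\) on \(\mathsf{GL}(\coma{P}A)\) in Proposition~\ref{prop:KV-properties}, rather than via weak contractibility of the simplex, though your route there would also work.)
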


In order to prove the theorem above, we define two intermediary functors \(K_n^\an\) and \(K_n^{\an,\updagger}\) for each \(n\). These functors are homotopy invariant for \(\coma{\dvr[t]}\) and dagger homotopies, but have no interesting stability properties in general. We study these to relate them to analytic and overconvergent analogues of the \textit{Karoubi-Villamayor \(K\)-theory} groups.

Let \(A\) be a unital, complete, torsionfree bornological \(\dvr\)-algebra. Consider the \textit{continuous path extension} \(\coma{\Omega}(A) \into \coma{P}(A) \overset{\ev_1}\onto A\), where \(\coma{P}(A) = \ker(A\gen{\Delta^1} \overset{\ev_0}\to A)\). Here \(A\gen{\Delta^1} = A \hot \dvr\gen{\Delta^1}\), where \(\dvr\gen{\Delta^n} = \coma{\dvr[t_0,\dotsc,t_n]}/\gen{\sum_{i=0}^n t_i -1}\) and we equip the Tate algebra \(\coma{\dvr[t]}\) with the bornology where all subsets are bounded.  This is a semi-split extension by complete, torsionfree bornological \(\dvr\)-algebras. The evaluation map \(\ev_1 \colon \coma{P}(A) \to A\) induces a group homomorphism \(\mathsf{GL}_n(\coma{P}(A)) \to \mathsf{GL}_n(A)\) between the groups of \(n \times n\)- invertible matrices. Taking colimits along the usual corner inclusions of matrices yields a group homomorphism \(p \colon \mathsf{GL}(\coma{P}(A)) \to \mathsf{GL}(A)\), whose image we denote by \(\mathsf{GL}(A)'\). Dividing out this subgroup, we get 

\[ KV_1^\an(A) \defeq \mathsf{GL}(A)/ \mathsf{GL}(A)'\] the first \textit{analytic \(KV\)-group} of \(A\). For non-unital algebras \(A\), we use the unitalisation \(\tilde{A} = A \oplus \dvr\), and define 
\[
KV_1^\an(A) \defeq \ker(KV_1^\an(\tilde{A}) \to KV_1^\an(\dvr)).
\]

\noindent The following properties of \(\mathrm{KV}_1^\an(A)\) will be used in the remainder of the section:

\begin{proposition}\label{prop:KV-properties}
Consider \(\mathrm{KV}_1^\an\) as a functor \(\Alg_\dvr^\tf \to \mathsf{Mod}_\Z\). 

\begin{enumerate}
\item\label{KV-quotient} There is a natural surjection \(\mathrm{K}_1(R) \onto KV_1^\an(R)\); 
\item\label{KV-split} \(KV_1^\an\) is split exact;
\item\label{KV-excision} Suppose \(A \into B \onto C\) is an extension in \(\Alg_\dvr^\tf\) such that \(\mathsf{GL}(B)' \to \mathsf{GL}(C)'\) is surjective, then there is a long exact sequence 
\[\begin{tikzcd}
KV_1^\an(A) \arrow{r}{} & KV_1^\an(B) \arrow{r}{} & KV_1^\an(C) \arrow{d}{} \\
K_0(C) & K_0(B) \arrow{l}{} & K_0(A) \arrow{l}{} 
\end{tikzcd}
\]
\item\label{KV-homotopy-invariance} \(KV_1^\an\) is additive, \(\coma{\dvr[t]}\)-homotopy invariant and \(\mathcal{M}_\N^{\mathrm{alg}}\)-stable. 
\end{enumerate}

\end{proposition}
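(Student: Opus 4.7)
The plan is to adapt the classical Karoubi--Villamayor arguments (\cite{kv}) to the analytic path object \(\coma{P}(A) = \ker(\ev_0\colon A\gen{\Delta^1}\to A)\) in place of the polynomial ideal \(tA[t]\). The crucial enabling observation is that for any bounded subset \(S \subseteq A\), the map \(a\mapsto ta\) sends \(S\) to a bounded subset of \(\coma{P}(A) \subseteq A\hot\coma{\dvr[t]}\), so the standard linear homotopies \(e_{ij}(a) \rightsquigarrow e_{ij}(ta)\) remain available in the bornological setting.

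I would first dispose of (1) and (4), which are essentially formal. For (1), Whitehead's lemma identifies \([\mathsf{GL}(R),\mathsf{GL}(R)]\) with the subgroup \(E(R)\) of elementary matrices, and each generator \(e_{ij}(a)\) lifts to \(e_{ij}(ta) \in \mathsf{GL}(\coma{P}(R))\) (which evaluates to the identity at \(t=0\) and to \(e_{ij}(a)\) at \(t=1\)), so \(E(R) \subseteq \mathsf{GL}(R)'\); the resulting surjection \(K_1(R) = \mathsf{GL}(R)^{\mathrm{ab}} \twoheadrightarrow KV_1^\an(R) = \mathsf{GL}(R)/\mathsf{GL}(R)'\) is the one claimed. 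Additivity in (4) follows from \(\mathsf{GL}(A_1\oplus A_2)\cong \mathsf{GL}(A_1)\times\mathsf{GL}(A_2)\); the \(\mathcal{M}_\N^{\mathrm{alg}}\)-stability holds because \(\mathsf{GL}(\mathcal{M}_\N^{\mathrm{alg}}(A)) = \varinjlim_n\mathsf{GL}_n(A) = \mathsf{GL}(A)\) with both colimits already performed in the definition; and \(\coma{\dvr[t]}\)-homotopy invariance reduces, following the classical pattern, to the identification \(\coma{P}(A\gen{t})\cong \coma{P}(A)\gen{t}\), which is a consequence of Lemma~\ref{lem:tensor-exact}.

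For (2), given a bounded algebra section \(s\colon C\to B\) of a split extension \(A\into B\onto C\), the semidirect decomposition \(\mathsf{GL}(B)\cong \mathsf{GL}_A(B)\rtimes \mathsf{GL}(C)\), where \(\mathsf{GL}_A(B)\) denotes the kernel of the induced map to \(\mathsf{GL}(C)\), is compatible with the analogous decomposition for the path algebras. Taking quotients by the corresponding \(\mathsf{GL}(-)'\) subgroups then yields the asserted split short exact sequence at the level of \(KV_1^\an\).

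The main obstacle is (3). The connecting homomorphism \(\partial\colon KV_1^\an(C)\to K_0(A)\) is to be built via the classical recipe: represent \([c]\in KV_1^\an(C)\) by \(c\in\mathsf{GL}_n(C)\), pass to \(c\oplus c^{-1}\in E_{2n}(C)\) so that it lifts (after sufficient stabilisation) to \(\tilde c\in\mathsf{GL}_{2n}(B)\), and set \(\partial[c] \defeq [\tilde c e_n \tilde c^{-1}] - [e_n] \in K_0(A)\), where \(e_n = \mathrm{diag}(1,\dotsc,1,0,\dotsc,0)\). Well-definedness hinges on the surjectivity hypothesis \(\mathsf{GL}(B)'\to\mathsf{GL}(C)'\), which controls the ambiguity of the lift modulo elements vanishing in \(KV_1^\an\). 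Exactness at each term is then verified by a snake-lemma-type diagram chase in
\[
\begin{tikzcd}
\mathsf{GL}(\coma{P}(A)) \arrow[r] \arrow[d,"\ev_1"'] & \mathsf{GL}(\coma{P}(B)) \arrow[r] \arrow[d,"\ev_1"'] & \mathsf{GL}(\coma{P}(C)) \arrow[d,"\ev_1"'] \\
\mathsf{GL}(A) \arrow[r] & \mathsf{GL}(B) \arrow[r] & \mathsf{GL}(C),
\end{tikzcd}
\]
paralleling \cite{kv}. The technical subtlety is that the upper row fails to be exact in general; the surjectivity hypothesis on \(\mathsf{GL}(B)'\to\mathsf{GL}(C)'\) is precisely what is needed to repair this failure and close the long exact sequence at the junction \(KV_1^\an(C)\to K_0(A)\).
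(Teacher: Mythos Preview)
Your proposal is correct in outline and covers the same ground as the paper, though the organization and emphasis differ in two places worth noting.

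For part~(3), the paper takes a shorter route than your explicit construction of \(\partial\): it simply splices the classical \(K_1\)--\(K_0\) long exact sequence through the surjections \(K_1 \twoheadrightarrow KV_1^\an\) established in part~(1). The surjectivity hypothesis \(\mathsf{GL}(B)' \to \mathsf{GL}(C)'\) guarantees that the standard boundary \(K_1(C) \to K_0(A)\) kills \(\mathsf{GL}(C)'\) (anything there lifts to \(\mathsf{GL}(B)\), hence dies under \(\partial\)), so the map descends to \(KV_1^\an(C)\); exactness at the remaining terms then follows formally from the algebraic long exact sequence together with the exactness \(KV_1^\an(A) \to KV_1^\an(B) \to KV_1^\an(C)\) already verified. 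Your direct construction via \(\tilde c\, e_n\, \tilde c^{-1}\) is equivalent but more laborious, and you do not need the snake-lemma diagram you draw.

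For part~(4), your sketch of homotopy invariance is the one place that reads as genuinely incomplete. Citing the identification \(\coma{P}(A\gen{t}) \cong \coma{P}(A)\gen{t}\) is bookkeeping, not the argument. The paper (and the classical proof) first uses split exactness from part~(2), applied to the extension \(\coma{P}(A) \into A\gen{\Delta^1} \onto A\), to reduce the question to showing \(KV_1^\an(\coma{P}A) = 0\), i.e.\ \(\mathsf{GL}(\coma{P}A) = \mathsf{GL}(\coma{P}A)'\). It then observes that any \(\alpha(s) \in \mathsf{GL}(\coma{P}A)\) is connected to the identity by the rescaling path \(\beta(s,t) \defeq \alpha(st) \in \mathsf{GL}(\coma{P}\,\coma{P}A)\). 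This rescaling is the actual content; you should name it explicitly rather than bury it in ``the classical pattern''. Your tensor identification is what makes \(\alpha(st)\) land in the right algebra, but by itself it proves nothing.
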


\begin{proof}
First, let \(A\) be a complete, torsionfree bornological \(\dvr\)-algebra. Then for \(i \neq j\), \(1 + t a e_{ij}\) is a path from \(1\) to \(1 + a e_{ij}\). Since elements of the form \(1 + a e_{ij}\) generate the subgroup \(E(R) \cap \mathsf{GL}(A)\), the latter subgroup is contained in \(\mathsf{GL}(A)'\). Consequently, we get a surjection \(K_1(A) = \mathsf{GL}(A)/E(A) \cap \mathsf{GL}(A) \onto \mathsf{GL}(A)/\mathsf{GL}(A)'\) and the latter is \(KV_1^\an(A)\) by definition. Now suppose \(A\) is non-unital. Then the unital case, together with the fact that \(KV_1^\an(A) = \ker(KV_1^\an(A \oplus \dvr) \to KV_1^\an(\dvr))\) implies part \ref{KV-quotient}. To see \ref{KV-split}, let \(A \into B \onto C\) be an extension of algebras. The hypothesis that \(\mathsf{GL}(B)' \to \mathsf{GL}(C)'\) is onto implies that we get an exact sequence \(KV_1^\an(A) \to KV_1^\an(B) \to KV_1^\an(C)\). Using that the functor \(\mathsf{GL}\) and that tensoring with a complete bornological \(\dvr\)-module preserve kernels, we get a commuting diagram with exact rows and columns:
\[
\begin{tikzcd}
& 1 \arrow{d}{} & 1 \arrow{d}{} & 1 \arrow{d} \\
1 \arrow{r}{} & \mathsf{GL}(\coma{\Omega}(A)) \arrow{r}{} \arrow{d}{} & \mathsf{GL}(\coma{\Omega}(B)) \arrow{r}{} \arrow{d}{} & \mathsf{GL}(\coma{\Omega}(C)) \arrow{d}{} \\
1 \arrow{r}{} & \mathsf{GL}(\coma{P}(A)) \arrow{r}{} \arrow{d}{} & \mathsf{GL}(\coma{P}(B)) \arrow{r}{} \arrow{d}{} & \mathsf{GL}(\coma{P}(C))  \arrow{d}{} \\
1 \arrow{r}{} & \mathsf{GL}(A) \arrow{r}{}  & \mathsf{GL}(B) \arrow{r}{} & \mathsf{GL}(C). 
\end{tikzcd}
\]
Now suppose the extension \(A \into B \onto C\) is split exact. Then the rows in the diagram above are split exact. Furthermore, it follows from the diagram above that \(\mathsf{GL}(A)' = \mathsf{GL}(B)' \cap \mathsf{GL}(A)\). Consequently, \(KV_1^\an(A) \to KV_1^\an(B)\) is injective, and the sequence 

\begin{equation}\label{eqref:KV-sequence}
KV_1^\an(A) \to KV_1^\an(B) \to KV_1^\an(C)
\end{equation} is exact. Part \ref{KV-excision} follows from Part \ref{KV-quotient} and Equation \ref{eqref:KV-sequence}. For the homotopy invariance claim in \ref{KV-homotopy-invariance}, we show that the split surjection \(KV_1^\an(A\gen{\Delta^1}) \to KV_1^\an(A)\) induced by the evaluation homomorphism is an injection. By split exactness, the kernel of this map is \(KV_1^\an(PA) = \mathsf{GL}(\coma{P}A) / \mathsf{GL}(\coma{P}A)'\), so it only remains to show that \(\mathsf{GL}(\coma{P}A) \subseteq \mathsf{GL}(\coma{P}A)'\). For this, take \(\alpha(s) \in \mathsf{GL}(\coma{P}A)\). Then \(\beta(s,t) \defeq \alpha(st) \in \mathsf{GL}(\coma{P}\coma{P}A)\) is the required null-homotopy. The proof of \(\mathcal{M}_\N^\mathrm{alg}\)-stability is the similar to that of \(K_1\).    
\end{proof}

The \textit{higher analytic Karoubi-Villamayor groups} can be defined as \[KV_{n+1}^\an(A) \defeq KV_1(\coma{\Omega}^n(A)), \quad n \geq 1.\]

By Proposition \ref{prop:KV-properties}, since \(\coma{\Gamma}(R)\) is an infinite sum ring, by adapting the proof of \cite{cortinas2011algebraic}*{Proposition 2.3.1} we have that \(K_0(\coma{\Gamma}(A)) = \mathrm{KV}_1^\an(\coma{\Gamma}(A)) = 0\). Consequently, the surjection \(K_1(\coma{\Sigma}(A)) \onto \mathrm{KV}_1^\an(\coma{\Sigma}(A))\) factors through \(K_0(\coma{\Sigma}(A))\), inducing a surjection \(K_0(A) \onto \mathrm{KV}_1^\an(\coma{\Sigma} A)  \). Now applying part \ref{KV-excision} of Proposition \ref{prop:KV-properties} to the loop extension yields a map 

\begin{equation}\label{eqref:KV-K0}
\mathrm{KV}_1^\an(A) \to K_0(\coma{\Omega} (A)).
\end{equation}

Substituting \(\coma{\Sigma}(A)\) for \(A\), and composing with the map \(K_0 (A) \to \mathrm{KV}_1^\an(\coma{\Sigma}(A))\), we get a morphism \(K_0(A) \to K_0(\coma{\Sigma} \coma{\Omega} (A))\). Note that \(\coma{\Sigma}\) and \(\coma{\Omega}\) commute. Iterating this construction, we get a nonconnective version of topological \(K\)-theory in our setting:

\begin{definition}\label{def:KH-theory}
The \textit{analytic \(K\)-theory} of a complete, torsionfree bornological \(\dvr\)-algebra \(A\) is defined as the family of abelian groups 
\[K_n^\an(A) \defeq \varinjlim_m K_0(\coma{\Sigma}^m \coma{\Omega}^{n+m}(A)),\] for \(n \in \Z\). 
\end{definition}

We can also express the groups \(K_n^\an(A)\) in terms of the analytic \(KV\)-groups. To see this, we replace \(A\) by \(\coma{\Sigma}(A)\) in Equation \ref{eqref:KV-K0} and compose with the map \(K_0(-) \onto KV_1(\coma{\Sigma}(-))\) to get \[KV_1^\an(\coma{\Sigma}(A)) \to K_0(\coma{\Sigma}\coma{\Omega}(A)) \to KV_1^\an(\coma{\Sigma}^2 \coma{\Omega}(A)) = KV_2^\an(\coma{\Sigma}^2(A)).\] Iterating, we get an isomorphic inductive system whose colimit is again 
\[K_n^\an(A) \cong \varinjlim_m KV_1^\an(\coma{\Sigma}^{m+1} \coma{\Omega}^{n+m}(A)) = \varinjlim_m KV_{n+m}^\an( \coma{\Sigma}^m(A))\] for \(n \in \Z\).

As in the bornological \(\C\)-algebra case, to obtain stability of nontrivial stabilisations, we need to put this in by hand. Define the \textit{stabilised analytic \(K\)-theory} functors by 
\[\tilde{K}_n^\an(A) \defeq K_n^\an(\mathcal{M}^{\mathrm{cont}}(A)),\] for \(n \in \Z\). This is a functor on the category \(\Alg_\dvr^\tf\) of complete, torsionfree bornological \(\dvr\)-algebras. 

\begin{theorem}\label{thm:KH-theory-properties}
The functors \(\tilde{K}_n^\an\) on the category of complete, bornologically torsionfree \(\dvr\)-algebras satisfy the following properties:
\begin{enumerate}
\item Additivity;
\item \(\coma{\dvr[t]}\)-homotopy invariance;
\item \(\mathcal{M}^{\mathrm{cont}}\)-stability;
\item Excision for semi-split extensions of complete, torsionfree bornological \(\dvr\)-algebras. 
\end{enumerate}

\end{theorem}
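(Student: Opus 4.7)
The plan is to deduce each of the four properties by reducing to a corresponding statement for $K_0$ or $KV_1^\an$ via the colimit description
\[
\tilde{K}_n^\an(A) = \varinjlim_m K_0\bigl(\coma{\Sigma}^m \coma{\Omega}^{n+m}\mathcal{M}^{\mathrm{cont}}(A)\bigr) \cong \varinjlim_m KV_{n+m}^\an\bigl(\coma{\Sigma}^m\mathcal{M}^{\mathrm{cont}}(A)\bigr).
\]
The common ingredient throughout is that each of $\coma{\Sigma}$, $\coma{\Omega}$, and $\mathcal{M}^{\mathrm{cont}}\hot-$ is of the form \emph{tensor with a fixed complete torsionfree bornological algebra and possibly pass to a split kernel or cokernel}. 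By Lemma~\ref{lem:tensor-exact} these functors all preserve semi-split extensions, finite direct sums, and tensor products with $\coma{\dvr[t]}$, and they commute with one another up to natural isomorphism. Since filtered colimits are exact, any property formulated in terms of these operations transports cleanly to the colimit.

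Additivity of $\tilde{K}_n^\an$ is immediate from additivity of $K_0$. For $\coma{\dvr[t]}$-homotopy invariance, Proposition~\ref{prop:KV-properties}(\ref{KV-homotopy-invariance}) gives $\coma{\dvr[t]}$-homotopy invariance of $KV_1^\an$, which propagates through $\coma{\Sigma}$, $\coma{\Omega}$, and the colimit to $\tilde{K}_n^\an$. For $\mathcal{M}^{\mathrm{cont}}$-stability the decisive input is the natural isomorphism $\mathcal{M}^{\mathrm{cont}}\hot\mathcal{M}^{\mathrm{cont}} \cong \mathcal{M}^{\mathrm{cont}}$ arising from any bijection $\N\times\N \cong \N$, which exhibits $\mathcal{M}^{\mathrm{cont}}(A)$ as the matrix algebra of a product-stable bornological $\dvr$-module in the sense of Subsection~\ref{subsec:stability}. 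The corner inclusion $\mathcal{M}^{\mathrm{cont}}(A) \to \mathcal{M}^{\mathrm{cont}}(\mathcal{M}^{\mathrm{cont}}(A))$, composed with such an isomorphism, is an inner endomorphism, so Proposition~\ref{prop:M2-inner-endo} applies provided the target functor is $\mathbb{M}_2$-stable. This is classical for $K_0$ and is contained in Proposition~\ref{prop:KV-properties}(\ref{KV-homotopy-invariance}) for $KV_1^\an$, hence holds for $K_n^\an$ in the colimit.

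The main technical obstacle is excision. Applying $\mathcal{M}^{\mathrm{cont}}\hot-$ and then $\coma{\Sigma}^m\coma{\Omega}^{n+m}$ to a semi-split extension $A \into B \onto C$ produces a semi-split extension at every level $m$. In order to invoke Proposition~\ref{prop:KV-properties}(\ref{KV-excision}) I need the surjectivity of $\mathsf{GL}(\coma{\Sigma}^m\mathcal{M}^{\mathrm{cont}}(B))' \to \mathsf{GL}(\coma{\Sigma}^m\mathcal{M}^{\mathrm{cont}}(C))'$. This is secured by the classical Whitehead decomposition: for $g \in \mathsf{GL}(\coma{\Sigma}^m\mathcal{M}^{\mathrm{cont}}(C))$, the block matrix $g \oplus g^{-1}$, which is absorbed by $\mathcal{M}^{\mathrm{cont}}$-stability, is a product of elementary matrices in lifts of $g$ and $g^{-1}$ together with a permutation matrix in $\mathsf{GL}_2(\dvr)$, each factor lifting to $\coma{\Sigma}^m\mathcal{M}^{\mathrm{cont}}(B)$ with a polynomial (hence overconvergent) null-homotopy. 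Assembling the resulting six-term exact sequences of Proposition~\ref{prop:KV-properties}(\ref{KV-excision}) for each $m$ and passing to the colimit gives the desired long exact sequence
\[
\cdots \to \tilde{K}_{n}^\an(C) \to \tilde{K}_{n-1}^\an(A) \to \tilde{K}_{n-1}^\an(B) \to \tilde{K}_{n-1}^\an(C) \to \cdots.
\]
Naturality of the connecting morphisms with respect to the colimit structure maps follows from naturality of the six-term sequence in morphisms of extensions.
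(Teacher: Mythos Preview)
Your treatment of additivity, homotopy invariance, and \(\mathcal{M}^{\mathrm{cont}}\)-stability is essentially the paper's (indeed your stability argument is more explicit than the paper's one-line ``by construction since \(\mathcal{M}^{\cont}\hot\mathcal{M}^{\cont}\cong\mathcal{M}^{\cont}\)''). The issue is excision.

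The surjectivity of \(\mathsf{GL}(\coma{\Sigma}^m\mathcal{M}^{\cont}(B))' \to \mathsf{GL}(\coma{\Sigma}^m\mathcal{M}^{\cont}(C))'\) required by Proposition~\ref{prop:KV-properties}\ref{KV-excision} is not established by your Whitehead argument. Given \(g\in\mathsf{GL}(C)'\), the decomposition of \(g\oplus g^{-1}\) into elementary matrices produces a lift of \(g\oplus g^{-1}\) to \(\mathsf{GL}(B)'\), not of \(g\) itself. The phrase ``absorbed by \(\mathcal{M}^{\cont}\)-stability'' does not close this gap: stability is a statement about \(K\)-groups after quotienting by \(\mathsf{GL}'\), whereas the hypothesis in Proposition~\ref{prop:KV-properties}\ref{KV-excision} is an honest surjectivity of the subgroups \(\mathsf{GL}'\) themselves, \emph{before} passing to the quotient. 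For a merely semi-split (not split) extension there is no algebra section available to lift paths, and I see no repair of the argument along these lines.

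The paper sidesteps this entirely by working with the \(K_0\)-description of the colimit rather than the \(KV^{\an}\)-description. It first proves a separate Lemma~\ref{lem:nonpostive-K-properties}: for any extension \(A\into B\onto C\) in \(\Alg_\dvr^\tf\) and any \(n\le 0\), the functors \(K_{-n}^{\mathrm{stab}}(-)=K_0(\coma{\Sigma}^n\mathcal{M}^{\cont}(-))\) sit in a long exact sequence. The point is that the classical \(K_1\)--\(K_0\) six-term sequence is \emph{unconditional} (no hypothesis on \(\mathsf{GL}'\)), and combined with the vanishing of \(K_0\) and \(K_1\) on the infinite-sum ring \(\coma{\Gamma}(\tilde D)\) this yields \(K_1(\coma{\Sigma}\mathcal{M}^{\cont}(\tilde D))\cong K_0(\mathcal{M}^{\cont}(\tilde D))\), which splices the six-term sequences together. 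Since \(\tilde{K}_n^\an(A)=\varinjlim_m K_{-m}^{\mathrm{stab}}(\coma{\Omega}^{n+m}A)\) and \(\coma{\Omega}\) preserves semi-split extensions (Lemma~\ref{lem:tensor-exact}), excision for \(\tilde{K}^\an_*\) follows by taking the colimit of the sequences furnished by Lemma~\ref{lem:nonpostive-K-properties}. You should replace your \(KV\)-based argument by this \(K_0\)-based one.
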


The proof of Theorem \ref{thm:KH-theory-properties} will use the same properties of a version of negative \(K\)-theory whose domain is the category we are working in. But this causes no technical difficulties as just like with \(K_0\), we forget the topology on the algebra. Specifically, for \(n \geq 0\), we define the functors  
\[K_{-n}^{\mathrm{stab}} \colon \Alg_\dvr^\tf \to \mathsf{Mod}_\Z, \quad A \mapsto  K_0(\coma{\Sigma}^n(\mathcal{M}^{\mathrm{cont}}(A))),\] which we call the \textit{stabilised negative \(K\)-theory} groups of a bornological algebra. 

\begin{lemma}\label{lem:nonpostive-K-properties}
Let \(A \into B \onto C\) be an extension of complete, torsionfree bornological \(\dvr\)-algebras. Then for \(n \leq 0\), we have a long exact sequence 
\[
\begin{tikzcd}
K_n^{\mathrm{stab}}(A) \arrow{r}{} & K_n^{\mathrm{stab}}(B) \arrow{r}{} & K_n^{\mathrm{stab}}(C) \arrow{d}{\delta} \\
K_{n-1}^{\mathrm{stab}}(C) & K_{n-1}^{\mathrm{stab}}(B) \arrow{l}{} & K_{n-1}^{\mathrm{stab}}(A) \arrow{l}{} 
\end{tikzcd}
\]
\end{lemma}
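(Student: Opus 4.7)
The plan is to iterate the six-term sequence of Proposition~\ref{prop:KV-properties}(\ref{KV-excision}), using the continuous cone extension as a delooping device that identifies each $KV_1^\an$ group with the $K_0$ group at the next lower index.

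First I would verify that for every $n\geq 0$ the sequence
\[
\coma{\Sigma}^n\mathcal{M}^{\mathrm{cont}}(A) \into \coma{\Sigma}^n\mathcal{M}^{\mathrm{cont}}(B) \onto \coma{\Sigma}^n\mathcal{M}^{\mathrm{cont}}(C)
\]
is again a (semi\nobreakdash-split) extension in $\Alg_\dvr^\tf$.  Both $\mathcal{M}^{\mathrm{cont}}$ and $\coma{\Sigma}$ are complete and bornologically torsionfree (the latter by Lemma~\ref{lem:suspension-torsionfree}), so Lemma~\ref{lem:tensor-exact} ensures that tensoring with either preserves bornological embeddings, while the bounded $\dvr$\nobreakdash-linear section of the original extension tensors to a bounded $\dvr$\nobreakdash-linear section of the stabilised one.

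Next I would deloop by means of the continuous cone extension.  Applied to $Y=\coma{\Sigma}^{n-1}\mathcal{M}^{\mathrm{cont}}(X)$ it gives a semi\nobreakdash-split extension $\mathcal{M}^{\mathrm{cont}}(Y)\into \coma{\Gamma}(Y)\onto \coma{\Sigma}^n\mathcal{M}^{\mathrm{cont}}(X)$.  Adapting the argument used in Proposition~\ref{prop:delooping}, the infinite-sum-ring structure on $\coma{\Gamma}(Y)$ provided by Lemma~\ref{lem:wagorer-sumring} and Lemma~\ref{lem:technical} forces both $K_0(\coma{\Gamma}(Y))$ and $KV_1^\an(\coma{\Gamma}(Y))$ to vanish.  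The corresponding six-term sequence then collapses to a natural isomorphism
\[
KV_1^\an\bigl(\coma{\Sigma}^n\mathcal{M}^{\mathrm{cont}}(X)\bigr) \cong K_0\bigl(\mathcal{M}^{\mathrm{cont}}(Y)\bigr) \cong K_0\bigl(\coma{\Sigma}^{n-1}\mathcal{M}^{\mathrm{cont}}(X)\bigr) = K_{-(n-1)}^{\mathrm{stab}}(X),
\]
the middle identification using the isomorphism $\mathcal{M}^{\mathrm{cont}}\hot\mathcal{M}^{\mathrm{cont}}\cong\mathcal{M}^{\mathrm{cont}}$ arising from any set\nobreakdash-theoretic bijection $\N^2\cong\N$.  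Applying Proposition~\ref{prop:KV-properties}(\ref{KV-excision}) to the semi\nobreakdash-split extension of the first step and substituting the identifications above yields
\[
K_{-(n-1)}^{\mathrm{stab}}(A) \to K_{-(n-1)}^{\mathrm{stab}}(B) \to K_{-(n-1)}^{\mathrm{stab}}(C) \to K_{-n}^{\mathrm{stab}}(A) \to K_{-n}^{\mathrm{stab}}(B) \to K_{-n}^{\mathrm{stab}}(C),
\]
and naturality of the connecting maps in Proposition~\ref{prop:KV-properties}(\ref{KV-excision}) makes the splicing of these six-term pieces for $n\geq 1$ into the required long exact sequence.

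The main obstacle will be verifying the hypothesis of Proposition~\ref{prop:KV-properties}(\ref{KV-excision}) — namely the surjectivity of $\mathsf{GL}(B)'\to\mathsf{GL}(C)'$ — for the stabilised extensions.  Here one exploits the infinite-sum-ring structure on $\coma{\Gamma}$: the Eilenberg swindle permits absorbing any $\coma{\dvr[t]}$\nobreakdash-path in $\mathsf{GL}(C)$ into a larger matrix and lifting it through the bounded $\dvr$\nobreakdash-linear section of $B\onto C$, the correction for failure of multiplicativity being swallowed by the swindle.  This is the algebraic content of the remark preceding the lemma about ``forgetting the topology'', reducing the statement to the classical Bass--Karoubi long exact sequence for negative $K$\nobreakdash-theory of associative rings.
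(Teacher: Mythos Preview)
Your overall strategy---deloop via the continuous cone extension and iterate---matches the paper's, but you route the six-term sequence through $KV_1^{\an}$ where the paper uses algebraic~$K_1$. This choice is the source of the obstacle you identify in your last paragraph, and that obstacle is real: the hypothesis of Proposition~\ref{prop:KV-properties}(\ref{KV-excision}) (surjectivity of $\mathsf{GL}(\,\cdot\,)' \to \mathsf{GL}(\,\cdot\,)'$) is not obvious for the extensions $\coma{\Sigma}^n\mathcal{M}^{\mathrm{cont}}(B) \onto \coma{\Sigma}^n\mathcal{M}^{\mathrm{cont}}(C)$, and the Eilenberg swindle you sketch does not supply it---the swindle lives in $\coma{\Gamma}$, not in its quotient $\coma{\Sigma}$, and a $\dvr$-linear (not multiplicative) section does not lift $\mathsf{GL}$-paths.

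The paper sidesteps this entirely. It applies the classical Bass $K_1$--$K_0$ exact sequence (which needs no hypothesis on $\mathsf{GL}'$) to the extension
\[
\coma{\Sigma}(\mathcal{M}^{\mathrm{cont}}(A)) \into \coma{\Sigma}(\mathcal{M}^{\mathrm{cont}}(\tilde{B})) \onto \coma{\Sigma}(\mathcal{M}^{\mathrm{cont}}(\tilde{C})),
\]
after first unitalising to $A \into \tilde{B} \onto \tilde{C}$. The cone extension and the infinite-sum-ring property of $\coma{\Gamma}(\tilde{D})$ give $K_1(\coma{\Sigma}(\tilde{D})) \cong K_0(\mathcal{M}^{\mathrm{cont}}(\tilde{D}))$ (both $K_0$ and $K_1$ vanish on $\coma{\Gamma}(\tilde{D})$), which turns the $K_1$ terms into the $K_0^{\mathrm{stab}}$ terms; one then splits off the $K_*(\dvr)$ summands coming from unitalisation. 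Your final sentence about ``forgetting the topology'' and reducing to Bass--Karoubi is exactly the right instinct---but that means you should invoke the algebraic $K_1$--$K_0$ sequence directly rather than Proposition~\ref{prop:KV-properties}(\ref{KV-excision}), and then the surjectivity obstacle never arises.
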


\begin{proof}
We first see that since for any algebra \(D \in \Alg_\dvr^\tf\), \(\coma{\Gamma}(\tilde{D})\) is an infinite sum ring, the excision sequence for \(K_0\) and \(K_1\) applied to the cone extension \(\mathcal{M}^{\mathrm{cont}}(D) \into \coma{\Gamma}(D) \onto \coma{\Sigma}(D)\) yields \(K_1(\coma{\Sigma}(\tilde{D}) \cong K_0(\mathcal{M}^{\mathrm{cont}}(\tilde{D}))\). Now consider the extension  \(A \into \tilde{B} \onto \tilde{C}\) in \(\mathsf{Alg}_\dvr^\tf\). Then by Lemma \ref{lem:tensor-exact}, tensoring by \(\coma{\Sigma}\) and \(\mathcal{M}^{\mathrm{cont}}\) yields an extension \[\coma{\Sigma}(\mathcal{M}^{\mathrm{cont}}(A)) \into \coma{\Sigma}(\mathcal{M}^{\mathrm{cont}}(\tilde{B})) \onto \coma{\Sigma}(\mathcal{M}^{\mathrm{cont}}(\tilde{C}))\] again.  This yields a long exact sequence \[\begin{tikzcd}
K_0(\mathcal{M}^{\mathrm{cont}}(A)) \arrow{r}{} & K_0(\mathcal{M}^{\mathrm{cont}}(B)) \oplus K_0(\dvr) \arrow{r}{} & K_0(\mathcal{M}^{\mathrm{cont}}(C)) \oplus K_0(\dvr) \arrow{d}{\delta} \\
K_{-1}^{\mathrm{stab}}(C) \oplus K_{-1}^{\mathrm{stab}}(\dvr) & K_{-1}^{\mathrm{stab}}(B) \oplus K_{-1}^{\mathrm{stab}}(\dvr) \arrow{l}{} & K_{-1}^{\mathrm{stab}}(A), \arrow{l}{} 
\end{tikzcd}
\]
where we have used that \(K_0(\mathcal{M}^{\mathrm{cont}}) \cong K_0(\dvr)\) since \(\mathcal{M}^{\mathrm{cont}}\) is \(\dvgen\)-adically complete (see \cite{kbook}*{Lemma II.2.2}), and the \(\mathcal{M}^{\mathrm{triv}}\)-stability of \(K_0\). Splitting off the \(K_*(\dvr)\) summands, we get the result for \(n=0\). The general case follows by iteration. 
\end{proof}

\begin{proof}[Proof of Theorem \ref{thm:KH-theory-properties}]
Additivity follows from the fact that \(\coma{\Sigma}\) commutes with finite products, which is a consequence of Lemma \ref{lem:tensor-exact}. By Proposition \ref{prop:KV-properties}, \(KV_1^\an\) is \(\coma{\dvr[t]}\)-homotopy invariant. Since \(\mathrm{K}_i^\an(A) = \varinjlim_n \mathrm{KV}_{i+n}^\an(\coma{\Sigma}^n(A))\), the homotopy invariance of \(\mathrm{K}_i^\an\) follows for each \(i\). Stability follows by construction since \(\mathcal{M}^{\cont} \hot \mathcal{M}^\cont \cong \mathcal{M}^\cont\).
To see the excision claim, let \(A \into B \onto C\) be a semi-split extension of complete, torsionfree bornological \(\dvr\)-algebras. Then by Lemma \ref{lem:tensor-exact}, we see that repeated tensoring by \(\coma{\Sigma}\) and \(\coma{\Omega}\) is exact. The result now follows from the excision of stabilised non-positive \(K\)-theory in Lemma \ref{lem:nonpostive-K-properties}.  
\end{proof}

We shall now express the analytic \(KV\) and \(K\)-theory as homotopy groups of appropriate spectra. For a unital algebra \(A \in \Alg_\dvr^\tf\), denote by \(\mathsf{K}(A) = \mathsf{BGL}(A)\) its connective algebraic \(K\)-theory spectrum, and \(\mathsf{KV}^\an(A) \defeq \mathsf{K}(A\gen{\Delta^\bullet})\) the \textit{analytic Karoubi-Villamayor spectrum}. The latter is defined as the spectrum corresponding to the simplicial set \(([n] \mapsto \mathsf{K}(A \gen{\Delta^n}))\), where \(A \gen{\Delta^\bullet} \defeq A \hot \dvr\gen{\Delta^\bullet}\) is the base change with the standard rigid \(n\)-simplex. This is extended as usual to non-unital algebras by taking the homotopy fibre of the map \(\mathsf{KV}^\an(\tilde{A}) \to \mathsf{KV}^\an(\dvr)\).

Recall that the \textit{nonconnective algebraic \(K\)-theory spectrum} \(\mathbf{K}(A)\) of a unital algebra \(A\). Its \(n\)-th space is defined as 
\[\mathbf{K}(A)_n \defeq \Omega \vert \mathsf{K}(\Sigma^{n+1}(A)) \vert,\] where \(\Omega\) denotes the loop space of a topological space, and \(\vert - \vert\) is the geometric realisation of a simplicial set. 
The \textit{analytic \(K\)-theory spectrum} \(\mathbf{K}^\an(A)\) of a unital complete, torsionfree bornological \(\dvr\)-algebra \(A\) is defined as the spectrum whose \(n\)-th space is \[\mathbf{K}^\an(A)_n \defeq \Omega \mathsf{K}(\coma{\Sigma}^{n+1}(A\gen{\Delta^\bullet})) = \Omega \mathsf{K}((\coma{\Sigma}^{n+1}A)\gen{\Delta^\bullet}) = \Omega \mathsf{KV}^\an(\coma{\Sigma}^{n+1}(A)).\]

As in the purely algebraic case, the homotopy groups of the above spectra and the analytic \(KV\) and \(K\)-theory groups previous defined coincide:

\begin{theorem}\label{thm:Banach-KH}
For a complete, torsionfree bornological \(\dvr\)-algebras \(A\), we have \(\pi_n(\mathsf{KV}^\an(A)) \cong KV_n^\an(A)\) for all \(n \geq 1\), and \(\pi_n(\mathbf{K}^\an(A)) \cong K_n^\an(A)\) for all \(n \in \Z\).
\end{theorem}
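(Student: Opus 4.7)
The plan is to adapt the classical Karoubi--Villamayor comparison theorem (cf.\ Gersten, and the Banach-algebra versions by Hamida \cite{hamida} and Calvo \cite{calvo}) to our bornological setting. I will prove the connective claim first and then reduce the nonconnective one to it through the spectrum structure of \(\mathbf{K}^\an\).

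For the base case \(n=1\) of the connective part, my plan is to compute \(\pi_1\mathsf{KV}^\an(A) = \pi_1 |\mathsf{BGL}(A\gen{\Delta^\bullet})^+|\) as the abelianisation of \(\pi_0\) of the simplicial group \(\mathsf{GL}(A\gen{\Delta^\bullet})\). Unwinding face maps, this \(\pi_0\) is \(\mathsf{GL}(A)\) modulo the subgroup generated by \((\ev_1\alpha)(\ev_0\alpha)^{-1}\) for \(\alpha \in \mathsf{GL}(A\gen{\Delta^1})\); multiplying \(\alpha\) on the right by \((\ev_0\alpha)^{-1}\) shows the subgroup is exactly the image of \(\mathsf{GL}(\coma{P}(A)) \to \mathsf{GL}(A)\), i.e., \(\mathsf{GL}(A)'\). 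Hence \(\pi_0\mathsf{GL}(A\gen{\Delta^\bullet}) = KV_1^\an(A)\), and by Proposition~\ref{prop:KV-properties}(\ref{KV-quotient}) one has \(E(A) \subseteq \mathsf{GL}(A)'\), so this quotient is automatically abelian and no further abelianisation is needed. For \(n\ge 2\), I will iterate using a spectrum-level delooping \(\mathsf{KV}^\an(\coma{\Omega}A) \simeq \Omega\mathsf{KV}^\an(A)\). This follows from the semi-split loop extension \(\coma{\Omega}(A) \hookrightarrow \coma{P}(A) \xrightarrow{\ev_1} A\), with bounded linear section \(a\mapsto a\cdot t\); upgrading Proposition~\ref{prop:KV-properties}(\ref{KV-excision}) to a fiber sequence of spectra, and combining with the \(\coma{\dvr[t]}\)-homotopy \(f(t)\mapsto f(st)\) contracting \(\coma{P}(A)\) together with Proposition~\ref{prop:KV-properties}(\ref{KV-homotopy-invariance}), gives \(\mathsf{KV}^\an(\coma{P}(A))\simeq 0\) and hence the delooping. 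Iterating from the base case yields \(\pi_n\mathsf{KV}^\an(A) = \pi_1\mathsf{KV}^\an(\coma{\Omega}^{n-1}A) = KV_1^\an(\coma{\Omega}^{n-1}A) = KV_n^\an(A)\).

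The nonconnective claim reduces to the connective one via the spectrum formula \(\pi_k X = \varinjlim_n \pi_{n+k}(X_n)\). Using the definition \(\mathbf{K}^\an(A)_n = \Omega\mathsf{KV}^\an(\coma{\Sigma}^{n+1}A)\) and the connective part, I compute
\[
\pi_k\mathbf{K}^\an(A) = \varinjlim_n \pi_{n+k+1}\mathsf{KV}^\an(\coma{\Sigma}^{n+1}A) = \varinjlim_n KV_{n+k+1}^\an(\coma{\Sigma}^{n+1}A),
\]
which is valid once \(n\) is large enough that \(n+k+1 \geq 1\). Reindexing by \(m = n+1\) and comparing with the identification \(K_k^\an(A) = \varinjlim_m KV_{k+m}^\an(\coma{\Sigma}^m A)\) recorded after Definition~\ref{def:KH-theory} completes the proof.

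The main obstacle will be upgrading Proposition~\ref{prop:KV-properties}(\ref{KV-excision}) from a long exact sequence of abelian groups to a genuine fiber sequence of spectra, so that \(\mathsf{KV}^\an(\coma{\Omega}A)\) really identifies with \(\Omega\mathsf{KV}^\an(A)\) at the spectrum level. This reduces to checking that the evaluation \(\mathsf{K}(\coma{P}(A)\gen{\Delta^n}) \to \mathsf{K}(A\gen{\Delta^n})\) is a Kan fibration in each simplicial degree \(n\), for which I plan to exploit that the section \(a\mapsto a\cdot t\) of \(\ev_1\) lifts compatibly along base change by \(\dvr\gen{\Delta^n}\) and remains bornologically bounded. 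Once this fibration property is in hand, the looping iteration for higher \(n\) and the colimit computation in the nonconnective case are mechanical.
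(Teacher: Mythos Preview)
Your overall route---compute $\pi_1$ by hand, deloop via the loop extension, then pass to the colimit for the nonconnective part---is exactly the strategy the paper invokes by citing \cite{cortinas2011algebraic}*{Propositions 10.2.1, 10.3.2}. The $n=1$ computation and the reduction of $\pi_n\mathbf{K}^\an$ to the colimit of $KV^\an$-groups are both fine.

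The gap is in your delooping step. Your formulation ``the evaluation $\mathsf{K}(\coma{P}(A)\gen{\Delta^n}) \to \mathsf{K}(A\gen{\Delta^n})$ is a Kan fibration in each simplicial degree $n$'' does not parse: for fixed $n$ this is a map of spectra, not of simplicial sets. If you instead mean the map of simplicial groups $\mathsf{GL}(\coma{P}(A)\gen{\Delta^\bullet}) \to \mathsf{GL}(A\gen{\Delta^\bullet})$, note that a homomorphism of simplicial groups is a Kan fibration precisely when it is levelwise surjective, and here it is \emph{not}: surjectivity at level $0$ would force $\mathsf{GL}(A)=\mathsf{GL}(A)'$, i.e.\ $KV_1^\an(A)=0$. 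The bounded linear section $a\mapsto at$ does not help, because it does not carry invertible matrices to invertible matrices. Likewise, you cannot get a levelwise fiber sequence of $\mathsf{K}$-spectra from the extension $\coma{\Omega}(A)\gen{\Delta^n}\hookrightarrow\coma{P}(A)\gen{\Delta^n}\twoheadrightarrow A\gen{\Delta^n}$, since connective algebraic $K$-theory is not excisive.

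The argument in the reference the paper cites avoids this by working directly with the simplicial group $G_\bullet=\mathsf{GL}(A\gen{\Delta^\bullet})$. One first observes that $\lvert\mathsf{BGL}(A\gen{\Delta^\bullet})^+\rvert\simeq\lvert\mathsf{B}G_\bullet\rvert$ because $\pi_1\lvert\mathsf{B}G_\bullet\rvert=\pi_0G_\bullet=KV_1^\an(A)$ already has trivial $E(A)$-image, so the plus construction is vacuous. Then $\pi_n\lvert\mathsf{B}G_\bullet\rvert=\pi_{n-1}G_\bullet$, and the higher homotopy groups of $G_\bullet$ are read off from its normalized Moore complex: the intersection $\bigcap_{i>0}\ker d_i$ inside $\mathsf{GL}(A\gen{\Delta^n})$ identifies with $\mathsf{GL}$ of an iterated $\coma{\Omega}$, and the remaining face map $d_0$ produces exactly the quotient by $\mathsf{GL}(\cdot)'$. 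No fibration of spectra is needed. Once you replace your fibration argument with this Moore-complex computation (or follow the cited proof verbatim, substituting $\coma{\dvr[t]}$-homotopies for polynomial ones), the rest of your outline goes through.
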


\begin{proof}
The proofs of \cite{cortinas2011algebraic}*{Proposition 10.2.1} and \cite{cortinas2011algebraic}*{Proposition 10.3.2} work mutatis-mutandis. The only modification is to replace polynomial homotopies by \(\coma{\dvr[t]}\)-homotopies.\qedhere 
\end{proof}


As in \cite{tamme:thesis}*{Definition 7.4}, we also define a version of analytic \(K\)-theory that is dagger homotopy invariant, as this is the right notion of homotopy invariance for the analytic cyclic theory. To this end, we again define \(KV_1^{\an,\updagger}(A) \defeq \mathsf{GL}(A)/\mathsf{GL}(A)'\) for unital algebras, and by a completely analogous version of Theorem \ref{prop:KV-properties}, we can use split exactness to extend to unital algebras. For \(n \geq 1\), we define \(KV_{n+1}^{\an, \updagger}(A) \defeq KV_1^{\an,\updagger}(\Omega^n(A))\). Here we have used the path extension \(\Omega(A) \into P(A) \onto A\) from Example \ref{ex:path-extension}. We call the functors \(KV_n^{\an,\updagger} \colon \Alg_\dvr^\tf \to \mathsf{Mod}_\Z\) the \textit{overconvergent analytic \(K\)-theory}. The same construction as analytic \(K\)-theory applied to the overconvergent analytic \(KV\)-groups yields \[K_n^{\an,\updagger}(A) \defeq \varinjlim_m KV_{n+m+1}^{\an,\updagger}(\coma{\Sigma}^{m+1}(A))\] for \(n \in \Z\). Finally, we define the spectrum \[\mathsf{KV}^{\an,\updagger}(A) \defeq \mathsf{BGL}(A \gen{\Delta^\bullet}^\updagger),\] where \(A\) is a unital, complete, torsionfree bornological \(\dvr\)-algebra, and extend to nonunital algebras by \(\mathsf{KV}^{\an, \updagger}(A) = \mathsf{fib}(\mathsf{KV}^{\an, \updagger}(\tilde{A}) \to \mathsf{KV}^{\an, \updagger}(\dvr))\). We call this the \textit{overconvergent analytic \(KV\)-spectrum.} Its homotopy groups are denoted \(KV_n^{\an, \updagger}(A)\) and are called the \textit{overconvergent analytic \(KV\)-groups} of \(A\). Likewise, we define a nonconnective spectrum with \(n\)-th space \[\mathbf{K}^{\an, \updagger}(A)_n = \Omega \mathsf{K}(\coma{\Sigma}^{n+1}(A \gen{\Delta^\bullet}^\updagger)) = \Omega \mathsf{KV}^{\an, \updagger}(\coma{\Sigma}^{n+1}(A)),\] and extend to nonunital algebras by \(\mathbf{K}^{\an,\updagger}(A) \defeq \mathsf{fib}(\mathbf{K}(\tilde{A}) \to \mathbf{K}(\dvr))\); we call this the \textit{overconvergent analytic \(K\)-theory spectrum.} 

\begin{theorem}\label{thm:spectrum-homotopy-overconvergent}
For every complete, torsionfree bornological \(\dvr\)-algebra \(A\), we have \(\pi_n(\mathsf{KV}^{\an,\updagger}(A)) \cong KV_n^{\an,\updagger}(A)\) for all \(n \geq 1\) and \(\pi_n(\mathbf{K}^{\an,\updagger}(A)) \cong K_n^{\an,\updagger}(A)\) for all \(n \in \Z\).
\end{theorem}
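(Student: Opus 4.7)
My plan is to adapt the proofs of \cite{cortinas2011algebraic}*{Propositions~10.2.1, 10.3.2} mutatis-mutandis, following the template of Theorem~\ref{thm:Banach-KH} but substituting dagger homotopies and the overconvergent simplex $\dvr\gen{\Delta^\bullet}^\updagger$ for $\coma{\dvr[t]}$-homotopies and the rigid simplex $\dvr\gen{\Delta^\bullet}$. All the technical inputs have already been established in Section~\ref{sec:analytic-homotopy}: the weak contractibility of $\dvr\gen{\Delta^\bullet}^\updagger$ (Lemma~\ref{lem:3}), the fact that $A \hot (-)$ preserves finite limits and exact sequences of complete bornological $\dvr$-modules (Lemma~\ref{lem:4} and Lemma~\ref{lem:tensor-exact}), and the semi-split overconvergent path extension $\Omega(A) \into P(A) \onto A$ of Example~\ref{ex:path-extension}.

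For the connective statement, I would first observe that $\mathsf{GL}(A\gen{\Delta^\bullet}^\updagger)$ is a simplicial group and hence a Kan complex, so that
\[
\pi_n(\mathsf{KV}^{\an,\updagger}(A)) = \pi_n(\mathsf{BGL}(A\gen{\Delta^\bullet}^\updagger)) \cong \pi_{n-1}(\mathsf{GL}(A\gen{\Delta^\bullet}^\updagger)) \quad \text{for } n \geq 1.
\]
The base case $n = 1$ is essentially a tautology: two classes in $\mathsf{GL}(A)$ lie in the same component of $\mathsf{GL}(A\gen{\Delta^\bullet}^\updagger)$ iff they are connected by an invertible matrix over $A\gen{\Delta^1}^\updagger$, which is the equivalence relation defining $\mathsf{GL}(A)/\mathsf{GL}(A)' = KV_1^{\an,\updagger}(A)$. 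For $n \geq 2$ I would argue inductively via the simplicial group fibration
\[
\mathsf{GL}(\Omega(A)\gen{\Delta^\bullet}^\updagger) \to \mathsf{GL}(P(A)\gen{\Delta^\bullet}^\updagger) \to \mathsf{GL}(A\gen{\Delta^\bullet}^\updagger)
\]
obtained by applying $\mathsf{GL}$ termwise to the overconvergent loop extension tensored with $\dvr\gen{\Delta^\bullet}^\updagger$; the fibration property relies on the bounded linear splitting of $P(A) \onto A$ together with Lemma~\ref{lem:4}. Since $P(A)$ is dagger-contractible (the identity is dagger-homotopic to zero via $f(t) \mapsto f(st)$), the middle term is weakly contractible, so the long exact sequence collapses to give $\pi_k(\mathsf{GL}(A\gen{\Delta^\bullet}^\updagger)) \cong \pi_{k-1}(\mathsf{GL}(\Omega(A)\gen{\Delta^\bullet}^\updagger))$. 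Iterating and using the definition $KV_n^{\an,\updagger}(A) = KV_1^{\an,\updagger}(\Omega^{n-1}A)$ completes this part.

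The nonconnective part is then formal from the definition of the spectrum $\mathbf{K}^{\an,\updagger}(A)$:
\[
\pi_k(\mathbf{K}^{\an,\updagger}(A)) = \varinjlim_n \pi_{k+n+1}(\mathsf{KV}^{\an,\updagger}(\coma{\Sigma}^{n+1}(A))) = \varinjlim_n KV_{k+n+1}^{\an,\updagger}(\coma{\Sigma}^{n+1}(A)) = K_k^{\an,\updagger}(A),
\]
where the first equality uses $\mathbf{K}^{\an,\updagger}(A)_n = \Omega \mathsf{KV}^{\an,\updagger}(\coma{\Sigma}^{n+1}(A))$, the second applies the connective case, and the third is the defining colimit of $K_k^{\an,\updagger}$. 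The main obstacle is the careful verification that applying $\mathsf{GL}(-\gen{\Delta^\bullet}^\updagger)$ to a semi-split extension of complete torsionfree bornological algebras yields a Kan fibration of simplicial groups, together with the dagger contractibility of $\mathsf{GL}(P(A)\gen{\Delta^\bullet}^\updagger)$; both are straightforward consequences of the exactness results of Section~\ref{sec:analytic-homotopy}, so the adaptation from the classical case is routine.
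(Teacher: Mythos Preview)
Your proposal is correct and follows exactly the same approach as the paper, which simply states that the proof is ``similar to \cite{cortinas2011algebraic}*{Proposition 10.2.1} and \cite{cortinas2011algebraic}*{Proposition 10.3.2}'' with polynomial homotopies replaced by dagger homotopies. You have in fact supplied more detail than the paper does, correctly identifying the fibration argument from the overconvergent loop extension and the dagger contractibility of $P(A)$ as the key inputs.
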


\begin{proof}
Similar to proof of \cite{cortinas2011algebraic}*{Proposition 10.2.1} and \cite{cortinas2011algebraic}*{Proposition 10.3.2}. The only modification is to replace polynomial homotopies by dagger homotopies. 
\end{proof}

In what follows, let \(\tilde{K}_n^{\an, \updagger}(R) \defeq K_n^{\an, \updagger}(\mathcal{M}^{\mathrm{cont}}(R))\) for a complete, torsionfree bornological \(\dvr\)-algebra \(R\). These are called the \textit{stabilised overconvergent analytic \(K\)-groups} of \(A\).

\begin{theorem}\label{cor:KH-properties}
The stabilised overconvergent analytic \(K\)-groups \(\tilde{K}_n^{\an, \updagger} \colon \mathsf{Alg}_\dvr^\tf \to \mathsf{Mod}_\Z\) satisfy:
\begin{enumerate}
\item{\label{dagger-htpy}} Dagger homotopy invariance, that is, \(K_n^{\an, \updagger}(R) \cong K_n^{\an, \updagger}(R \hot \dvr[t]^\updagger)\) for each \(n\in \Z\);
\item{\label{matricial-stability}} \(\mathcal{M}^{\mathrm{cont}}\)-matricial stability, that is,  \(K_n^{\an, \updagger}(R) \cong K_n^{\an, \updagger}(\mathcal{M}^{\mathrm{cont}}(R))\) for each \(n \in \Z\);
\item{\label{excision}} Excision for semi-split extensions of complete, torsionfree bornological algebras: that is, for an extension \(I \into E \onto Q\) of such algebras, we have a natural long exact sequence \[\dotsc \to K_{n+1}^{\an, \updagger}(I) \to K_{n+1}^{\an, \updagger}(E) \to K_{n+1}^{\an, \updagger}(Q) \to K_n^{\an, \updagger}(I) \to K_{n}^{\an, \updagger}(E) \to K_{n}^{\an, \updagger}(Q) \to \dotsc\] of overconvergent analytic \(K\)-theory groups.
\end{enumerate}
\end{theorem}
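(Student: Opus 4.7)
The plan is to follow the proof of Theorem \ref{thm:KH-theory-properties} almost verbatim, replacing the continuous simplex $\coma{\dvr\gen{\Delta^\bullet}}$ and Tate algebra $\coma{\dvr[t]}$ by the overconvergent simplex $\dvr\gen{\Delta^\bullet}^\updagger$ and the Monsky--Washnitzer algebra $\dvr[t]^\updagger$ throughout. What needs verifying is that each ingredient used there continues to function in the dagger setting.

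For part \ref{dagger-htpy}, I would first establish the overconvergent analogue of Proposition \ref{prop:KV-properties}, as already alluded to in the text preceding the theorem. The key new input is dagger homotopy invariance of $KV_1^{\an,\updagger}$. The argument of Proposition \ref{prop:KV-properties}(\ref{KV-homotopy-invariance}) carries over: split exactness reduces matters to showing $\mathsf{GL}(P(A)) \subseteq \mathsf{GL}(P(A))'$, and given $\alpha(s) \in \mathsf{GL}(P(A))$, the element $\beta(s,t) = \alpha(st) \in \mathsf{GL}(P(P(A)))$ gives the required null-homotopy. The higher groups $KV_n^{\an,\updagger}$ then inherit dagger homotopy invariance via iteration of $\Omega$, and hence so does each $K_n^{\an,\updagger}$, being a colimit of such groups. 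Since $\mathcal{M}^{\mathrm{cont}}(R \hot \dvr[t]^\updagger) \cong \mathcal{M}^{\mathrm{cont}}(R) \hot \dvr[t]^\updagger$, applying this invariance to $\mathcal{M}^{\mathrm{cont}}(R)$ yields the statement for $\tilde{K}_n^{\an,\updagger}$.

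Part \ref{matricial-stability} is essentially formal. The matrix algebra $\mathcal{M}^{\mathrm{cont}}$ corresponds to $c_0(\N)$, which satisfies $c_0(\N) \hot c_0(\N) \cong c_0(\N \times \N) \cong c_0(\N)$ upon choosing a bijection $\N \times \N \cong \N$; this yields a bornological isomorphism $\mathcal{M}^{\mathrm{cont}} \hot \mathcal{M}^{\mathrm{cont}} \cong \mathcal{M}^{\mathrm{cont}}$, from which $\tilde{K}_n^{\an,\updagger}(\mathcal{M}^{\mathrm{cont}}(R)) = K_n^{\an,\updagger}(\mathcal{M}^{\mathrm{cont}} \hot \mathcal{M}^{\mathrm{cont}}(R)) \cong K_n^{\an,\updagger}(\mathcal{M}^{\mathrm{cont}}(R)) = \tilde{K}_n^{\an,\updagger}(R)$ is immediate.

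For part \ref{excision}, I would prove the overconvergent analogue of Lemma \ref{lem:nonpostive-K-properties}. The argument there relies only on two ingredients that are insensitive to the choice of homotopy interval: first, that $\coma{\Gamma}(\tilde{D})$ is an infinite sum ring (Lemma \ref{lem:wagorer-sumring}), giving $K_1(\coma{\Sigma}(\tilde{D})) \cong K_0(\mathcal{M}^{\mathrm{cont}}(\tilde{D}))$ via excision for the cone extension; and second, that tensoring by $\mathcal{M}^{\mathrm{cont}}$ and by $\coma{\Sigma}$ preserves semi-split extensions (Lemma \ref{lem:tensor-exact}). Iterating the cone argument then delivers a long exact sequence for the stabilised non-positive groups $K_{-n}^{\mathrm{stab}}$. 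Finally, passing to the inductive limit over $m$ in the definition $K_n^{\an,\updagger}(A) = \varinjlim_m KV_{n+m+1}^{\an,\updagger}(\coma{\Sigma}^{m+1}(A))$, together with excision for $KV_1^{\an,\updagger}$ from the overconvergent analogue of Proposition \ref{prop:KV-properties}(\ref{KV-excision}), assembles the desired long exact sequence for $\tilde{K}_n^{\an,\updagger}$. The main obstacle I anticipate is bookkeeping rather than conceptual: ensuring that the colimit along suspensions commutes with the excision boundary maps and that all intermediate extensions remain semi-split, both of which ultimately reduce to Lemma \ref{lem:tensor-exact}.
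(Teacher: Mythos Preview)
Your proposal is correct and takes essentially the same approach as the paper, which simply states that ``the same proof as Theorem~\ref{thm:KH-theory-properties} works after making obvious modifications.'' You have effectively spelled out what those obvious modifications are---replacing $\coma{\dvr[t]}$ by $\dvr[t]^\updagger$ and the continuous path/loop by the overconvergent ones---and correctly identified that the key inputs (the overconvergent analogue of Proposition~\ref{prop:KV-properties}, the isomorphism $\mathcal{M}^{\mathrm{cont}} \hot \mathcal{M}^{\mathrm{cont}} \cong \mathcal{M}^{\mathrm{cont}}$, and Lemma~\ref{lem:nonpostive-K-properties} together with Lemma~\ref{lem:tensor-exact}) carry over unchanged.
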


\begin{proof}
The same proof as Theorem \ref{thm:KH-theory-properties} works after making obvious modifications. \qedhere
\end{proof}

Finally, we relate the analytic and overconvergent \(K\)-theories with the \(KV\) and \(KH\)-theories of the reduction mod \(\dvgen\).

\begin{theorem}\label{thm:reduction-mod-p}
Let \(A\) be an \(\resf\)-algebra and let \(R\) be a complete, torsionfree bornological \(\dvr\)-algebra lifting that reduces mod \(\dvgen\) to \(A\). Suppose further that \(R^\updagger \subseteq \coma{R}\). Then \(KV_n^{\an,\updagger}(R^\updagger) \cong KV_n^{\an}(\coma{R}) \cong KV_n(A)\) for \(n\geq 1\) and \(K_n^{\an,\updagger}(R^\updagger) \cong KH_n(A) \cong K_n^\an(\coma{R})\) for all \(n \in \Z\). 
\end{theorem}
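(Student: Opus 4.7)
The plan is to reduce both isomorphisms to showing that, for $S$ equal to either $R^\updagger$ or $\coma{R}$, the simplicial map induced by reduction modulo $\dvgen$
\[
S\gen{\Delta^\bullet}^\updagger \onto A[\Delta^\bullet] \qquad \bigl(\text{resp.\ } S\gen{\Delta^\bullet} \onto A[\Delta^\bullet]\bigr)
\]
produces a weak equivalence after applying $\mathsf{GL}$. Combined with Theorems~\ref{thm:spectrum-homotopy-overconvergent} and~\ref{thm:Banach-KH}, this yields $KV_n^{\an,\updagger}(R^\updagger)\cong KV_n(A)\cong KV_n^\an(\coma{R})$ for all $n\ge 1$. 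The non-connective statement for $K_n$ will then follow by applying the $KV$-isomorphism level by level to the colimits defining analytic and homotopy algebraic $K$-theory, since $\coma{\Sigma}^m(R^\updagger)/\dvgen \cong \coma{\Sigma}^m(\coma{R})/\dvgen \cong \Sigma^m(A)$ (using $\coma{\Sigma}/\dvgen\cong \Sigma^{\mathrm{alg}}$ over $\resf$ and the fact that the completed bornological tensor product commutes with reduction mod $\dvgen$ for torsionfree algebras).

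Let me now explain the building blocks. First, for each simplicial degree $n$, the identifications $S\gen{\Delta^n}^\updagger/\dvgen\cong A[\Delta^n]$ and $S\gen{\Delta^n}/\dvgen \cong A[\Delta^n]$ follow from the tensor product description of $S\gen{\Delta^n}^\updagger = S\hot\dvr\gen{\Delta^n}^\updagger$ together with the hypothesis $S/\dvgen S = A$. The projection $\mathsf{GL}(S\gen{\Delta^n}^\updagger)\to \mathsf{GL}(A[\Delta^n])$ is surjective: for $S=\coma{R}$ the target algebra $\coma{R}\gen{\Delta^n}$ is $\dvgen$-adically complete and the statement is Hensel's lemma; for $S=R^\updagger$ one uses that $R^\updagger\gen{\Delta^n}^\updagger$ is a dagger algebra, so any $1+\dvgen x$ is a unit via the geometric series, absorbed by the bounded set $\sum_i \dvgen^i\{x\}^{i+1}$. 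Lifting $\bar u\in\mathsf{GL}_m(A[\Delta^n])$ and $\bar u^{-1}$ entrywise produces matrices $u,v$ with $uv,vu\in 1+\dvgen\mathbb{M}_m$, which are units by the preceding observation, showing $u\in\mathsf{GL}_m$. As surjective maps of simplicial groups are Kan fibrations, one obtains a homotopy fibre sequence
\[
1 + \dvgen\,\mathbb{M}_\infty(S\gen{\Delta^\bullet}^\updagger) \longrightarrow \mathsf{GL}(S\gen{\Delta^\bullet}^\updagger) \longrightarrow \mathsf{GL}(A[\Delta^\bullet]),
\]
and likewise in the continuous case with $\dvr\gen{\Delta^\bullet}$ in place of $\dvr\gen{\Delta^\bullet}^\updagger$.

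The heart of the argument is showing that the fibre is weakly contractible. The bijection $1+x\leftrightarrow x$ identifies $1+\dvgen\,\mathbb{M}_\infty(S\gen{\Delta^\bullet}^\updagger)$ as a pointed simplicial set with $\mathbb{M}_\infty(\dvgen S\gen{\Delta^\bullet}^\updagger)$, which in turn is isomorphic as a simplicial $\dvr$-module to $\mathbb{M}_\infty(S\gen{\Delta^\bullet}^\updagger)$ via multiplication by $\dvgen$ (using torsionfreeness). Since $\mathbb{M}_\infty(-)$ is a filtered colimit of finite direct sums, it preserves weak contractibility. It therefore suffices to verify that $S\gen{\Delta^\bullet}^\updagger \cong S \hot \dvr\gen{\Delta^\bullet}^\updagger$ is weakly contractible. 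But the explicit simplicial null-homotopy constructed in Lemma~\ref{lem:3} is given by multiplication with a distinguished element $f_{x_0}(t) \in \dvr\gen{\Delta^\bullet}^\updagger$; tensoring with $1_S$ over $\dvr$ produces a simplicial null-homotopy of the identity on $S\gen{\Delta^\bullet}^\updagger$. An entirely analogous argument, using the weak contractibility of $\dvr\gen{\Delta^\bullet}$ (proved by the same formula with Tate algebras replacing dagger algebras), handles $S=\coma{R}$. The long exact sequence of the Kan fibration then yields $\pi_n\mathsf{GL}(S\gen{\Delta^\bullet}^\updagger)\cong \pi_n\mathsf{GL}(A[\Delta^\bullet])$ for every $n\ge 0$, and passing to $\mathsf{BGL}$ gives the desired $KV$-isomorphisms.

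The main obstacle will be verifying rigorously that tensoring a simplicial null-homotopy in the bornological category with $S$ remains a simplicial null-homotopy after completion, and that this contractibility is preserved through the identifications with $\mathbb{M}_\infty$ and the exponential bijection with the multiplicative kernel. Once the $KV$-case is established, the nonconnective statement follows essentially formally from the definitions $K_n^{\an,\updagger}(R^\updagger)=\varinjlim_m KV_{n+m+1}^{\an,\updagger}(\coma{\Sigma}^{m+1}(R^\updagger))$, $K_n^\an(\coma{R})=\varinjlim_m KV_{n+m+1}^\an(\coma{\Sigma}^{m+1}(\coma{R}))$, and $KH_n(A)=\varinjlim_m KV_{n+m+1}(\Sigma^{m+1}(A))$, by applying the $KV$ isomorphism termwise and verifying that the structure maps agree (which they do, since the reduction $\coma{\Sigma}(-)/\dvgen \cong \Sigma(-)$ is compatible with the transition maps in all three colimit systems).
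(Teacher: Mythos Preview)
Your argument is correct and takes a genuinely different route from the paper for the isomorphism \(KV_n^{\an,\updagger}(R^\updagger)\cong KV_n(A)\). The paper does \emph{not} reduce \(R^\updagger\) directly mod~\(\dvgen\); instead it proves \(KV_n^{\an,\updagger}(R^\updagger)\cong KV_n^{\an}(\coma R)\) by analysing the inclusion \(R^\updagger\gen{\Delta^\bullet}^\updagger\hookrightarrow \coma R\gen{\Delta^\bullet}\) via explicit approximation: one approximates a given \(g\in\mathsf{GL}_m(\coma R\gen{\Delta^n})\) by matrices \(g_N\) with overconvergent entries (density), uses that \(\mathsf{GL}_m\) is open in \(\mathbb M_m\) and the Neumann series to show \(g_N\in\mathsf{GL}_m\) for large~\(N\), and then uses contractibility of the ``small ball'' simplicial set \(\mathbb M_m(\coma R\gen{\Delta^\bullet})^{00}\) to conclude \([g_N]=[g]\). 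Only afterwards does the paper invoke the fibration argument (citing Calvo) for the step \(KV_n^{\an}(\coma R)\cong KV_n(A)\). Your approach runs the fibration argument uniformly for both \(S=R^\updagger\) and \(S=\coma R\), which is cleaner and avoids the approximation lemmas from Tamme's thesis. What the paper's route buys is that the hypothesis \(R^\updagger\subseteq\coma R\) is used transparently as an inclusion of simplicial rings; in your route that hypothesis enters only to justify \(R^\updagger/\dvgen R^\updagger\cong A\), which you should state explicitly: from \(R\subseteq R^\updagger\subseteq\coma R\) one gets \(\coma{R^\updagger}=\coma R\), and since \(R^\updagger\) is torsionfree, \(R^\updagger/\dvgen R^\updagger\cong \coma{R^\updagger}/\dvgen=\coma R/\dvgen=A\). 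With that one line added, and the observation that \((S\hot\dvr\gen{\Delta^n}^\updagger)/\dvgen\cong (S/\dvgen)\otimes_\resf \resf[\Delta^n]\) (which follows from writing both tensor factors as filtered colimits of \(\dvgen\)-adically complete modules), your argument is complete. The nonconnective step is handled identically in both approaches.
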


\begin{proof}
The overconvergent rigid \(n\)-simplex \(\dvr\gen{\Delta^\bullet}^\updagger \) embeds into the rigid \(n\)-simplex \(\dvr\gen{\Delta^\bullet}\), which upon tensoring with the inclusion \(R^\updagger \to \coma{R}\) yields an inclusion \(R^\updagger \gen{\Delta^\bullet}^\updagger \to \coma{R} \gen{\Delta^\bullet}\). Therefore, we have maps of simplicial groups \(\mathsf{GL}(R^\updagger \gen{\Delta^\bullet}^\updagger) \to \mathsf{GL}(\coma{R} \gen{\Delta^\bullet})\). Since \(\pi_n(\mathsf{GL}(A_\bullet)) \cong \varinjlim_m \pi_n(\mathsf{GL}_m(A_\bullet))\) for a simplicial abelian group \(A_\bullet\), it suffices to prove that the induced map \[\pi_n(\mathsf{GL}_m(R^\updagger \gen{\Delta^\bullet}^\updagger)) \to \pi_n(\mathsf{GL}_m(\coma{R}\gen{\Delta^\bullet}))\] is an isomorphism for \(n \geq 0\) and any \(m \geq 1\).  

For the isomorphism claim above, the same argument as in \cite{tamme:thesis}*{Proposition 7.5} carries over. To see the surjectivity of the map in question, take a class in \(\pi_n(\mathsf{GL}_m(\coma{R}\gen{\Delta^\bullet})\); this is represented by a matrix \(g \in \mathsf{GL}_m(\coma{R}\gen{\Delta^n})\) such that \(d_i(g) = 1\) for \( i = 0,\dotsc, n\), where \(d_i\) are the face maps. By \cite{tamme:thesis}*{Lemma 7.7}, there is a sequence of matrices \((g_N) \in \mathbb{M}_m(\coma{R}\gen{\Delta^n})\) that converge to \(g\), satisfying \(d_i(g_N) = 1\) for \(i = 0,\dotsc, n\). Since \(\mathsf{GL}_m(\coma{R}\gen{\Delta^n}) \subseteq \mathbb{M}_m(\coma{R}\gen{\Delta^n})\) is open, the sequence \((g_N)\) eventually lies in \(\mathsf{GL}_m(\coma{R}\gen{\Delta^n})\). That is, for a sufficiently large \(N\), \(g_N \in \mathsf{GL}_m(\coma{R}\gen{\Delta^n})\). By \cite{tamme:thesis}*{Lemma 7.8}, it actually lies in \(\mathsf{GL}_m(R^\updagger \gen{\Delta^n}^\updagger)\). By Lemma \ref{lem:3}, \(\coma{R}\gen{\Delta^\bullet}\) and hence \(\mathbb{M}_m(\coma{R}\gen{\Delta^\bullet})\) and \(\mathbb{M}_m(\coma{R}\gen{\Delta^\bullet})^{00} \defeq ([n] \mapsto \setgiven{g \in \mathbb{M}_m(\coma{R}\gen{\Delta^n}}{\norm{g}<1})\) are contractible. Here the norm of a matrix is defined as the maximum of the norm of the entries. Now choosing \(N\) sufficiently large, we have \(g_N g^{-1} - 1 \in \mathbb{M}_m(\coma{R}\gen{\Delta^n})^{00}\). The contractibility of this simplicial set implies that there is an \(h \in \mathbb{M}_m(\coma{R}\gen{\Delta^{n+1}})^{00}\) such that \(d_0(h) = 1\) and \(d_i(h) = 0\) for all \(i \geq 1\). By the Neumann series, since \(\norm{h}<1\), we have \(1 +h \in \mathsf{GL}_m(\coma{R}\gen{\Delta^{n+1}})\). Furthermore, \(\delta_0(1 + h) = g_N g^{-1}\) and \(\delta_i(1+h) = 1\). This implies \([g_Ng^{-1}] = [1]\), or that \([g_N] = [g]\), proving surjectivity.

To see injectivity, let \(g \in \mathsf{GL}_m(R^\updagger \gen{\Delta^n}^\updagger)\) such that \(d_i(g) = 1\) for \(i = 0,\dotsc, n\). Assume there exists \(h \in \mathsf{GL}_m(\coma{R}\gen{\Delta^{n+1}})\) such that \(d_0(h) = g\) and \(d_i(h) = 0\) for \(i> 0\). Since the simplicial abelian group \(\mathbb{M}_m(R^\updagger\gen{\Delta^\bullet}^\updagger)\) is contractible, there is an \(\tilde{h} \in \mathbb{M}_m(R^\updagger\gen{\Delta^{n+1}}^\updagger)\) such that \(d_0(\tilde{h}) = g\) and \(d_i(\tilde{h}) = 1\) for \( i = 1,\dotsc, n+1\). By the same argument as for surjectivity applied to \(\tilde{h} - h\), there is a sequence \((h_N) \in \mathbb{M}_m(R^\updagger \gen{\Delta^{n+1}}^\updagger)\) converging to \(\tilde{h} - h\) such that \(\delta_i(h_N) = 0\) for \(i = 0,\dotsc, n+1\) and all \(N\). Then \(h_N + \tilde{h}\) converges to \(h \in \mathsf{GL}_m(\coma{R}\gen{\Delta^{n+1}})\). Since \(\mathsf{GL}_m(\coma{R}\gen{\Delta^{n+1}}) \subseteq \mathbb{M}_m(R\gen{\Delta^{n+1}})\) is open, applying \cite{tamme:thesis}*{Lemma 7.8}, we have \(h_N + \tilde{h} \in \mathsf{GL}_m(R^\updagger \gen{\Delta^{n+1}}^\updagger)\) for sufficiently large \(N\). As \(\delta_0(h_N + \tilde{h}) = g\) and \(\delta_i(h_N + \tilde{h}) = 1\) for \(i = 1,\dotsc, n+1\), we have  \([g] = [1]\) as required. 

What we have proved so far is that \(KV_n^{\an,\updagger}(R) \cong KV_n^{\mathrm{an}}(\coma{R})\) for \(n \geq 1\). The right hand side is isomorphic to \(KV_n(A)\) for \( n\geq 1\) by \cite{calvo}*{Proposition 2.1} (see also \cite{tamme2014karoubi}*{Remark 3.2 (ii)}). The proof uses that we have an extension of simplicial abelian groups \[1 + \dvgen \mathbb{M}_m(\coma{R}\gen{\Delta^\bullet}) \into \mathsf{GL}_m(\coma{R}\gen{\Delta^\bullet}) \onto \mathsf{GL}_m(A[\Delta^\bullet]),\] and that \(\mathbb{M}_m(\coma{R}\gen{\Delta^\bullet})\) is contractible. The exactness of the inductive limit functor now yields that \(KV_n^\an(R) \cong KV_n(A)\) for all \(n \geq 1\).  

To see the claim about \(K_n^\an\), we replace \(R\) in the argument above by \(\coma{\Sigma}(R)\). Of course, we still have \((\coma{\Sigma} \hot R)^\updagger \subseteq \coma{\Sigma} \hot \coma{R}\). Therefore, \begin{multline*}
K_n^{\an,\updagger}(R) \cong \varinjlim_m KV_{n+m}^{\an,\updagger}(\coma{\Sigma}^m(R^\updagger)) \\ \cong \varinjlim_{m} KV_{n+m}^\an(\coma{\Sigma}^m(\coma{R})) \cong \varinjlim_m KV_{n+m}(\Sigma^m (A)) \cong KH_n(A),
\end{multline*} as required. \qedhere
\end{proof}

The hypotheses of Theorem \ref{thm:reduction-mod-p} are satisfied by any Banach \(\dvr\)-algebra and any affinoid dagger algebra. For noncommutative dagger algebras that are not Banach algebras, one has to check this condition by explicitly computing the dagger completion. This has already been done for monoid algebras and crossed product algebras (see \cite{Meyer-Mukherjee:Bornological_tf}*{Section 6, Proposition 7.5}).  

\begin{corollary}\label{cor:stab-mod-p}
With \(A\) and \(R\) as in Theorem \ref{thm:reduction-mod-p}, we have \(\tilde{K}_n^{\an,\updagger} (R^\updagger) \cong K_n^{\an,\updagger}(R^\updagger) \cong KH_n(A)\). In particular, for these algebras, the stabilised and unstable overconvergent analytic \(K\)-theories coincide. 
\end{corollary}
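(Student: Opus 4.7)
The plan is to deduce both isomorphisms by applying Theorem~\ref{thm:reduction-mod-p} twice and then invoking $\mathbb{M}_\infty$-stability of Weibel's homotopy algebraic $K$-theory. The right-hand isomorphism $K_n^{\an,\updagger}(R^\updagger) \cong KH_n(A)$ is precisely Theorem~\ref{thm:reduction-mod-p} for the pair $(A,R)$, so no further work is needed there; the content of the corollary lies entirely in producing the first isomorphism $\tilde{K}_n^{\an,\updagger}(R^\updagger) \cong K_n^{\an,\updagger}(R^\updagger)$.

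Unravelling the definition gives $\tilde{K}_n^{\an,\updagger}(R^\updagger) = K_n^{\an,\updagger}(\mathcal{M}^{\cont}(R^\updagger))$, and I would compute the right-hand side by applying Theorem~\ref{thm:reduction-mod-p} to the lift $S := \mathcal{M}^{\cont} \hot R$, a complete, torsionfree bornological $\dvr$-algebra. Two conditions must be verified: (i) $S/\dvgen S \cong \mathbb{M}_\infty(A)$, and (ii) $S^\updagger \cong \mathcal{M}^{\cont}(R^\updagger)$ with $S^\updagger \subseteq \coma{S}$. For (i), note that $\mathcal{M}^{\cont}/\dvgen \mathcal{M}^{\cont} = c_0(\N \times \N, \resf)$, and since the absolute value on $\resf$ is trivial every null sequence is eventually zero, so this is the algebraic matrix ring $\mathbb{M}_\infty(\resf)$; combined with $R/\dvgen R \cong A$ and the compatibility of $\hot$ with reduction mod $\dvgen$ in the present setting, one obtains $S/\dvgen S \cong \mathbb{M}_\infty(A)$. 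Granted (i) and (ii), Theorem~\ref{thm:reduction-mod-p} applied to $S$ yields $K_n^{\an,\updagger}(\mathcal{M}^{\cont}(R^\updagger)) \cong KH_n(\mathbb{M}_\infty(A))$, and the $\mathbb{M}_\infty$-stability of Weibel's $KH$ then gives $KH_n(\mathbb{M}_\infty(A)) \cong KH_n(A)$. Chaining these with the second isomorphism produces $\tilde{K}_n^{\an,\updagger}(R^\updagger) \cong KH_n(A) \cong K_n^{\an,\updagger}(R^\updagger)$.

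The main technical obstacle is the compatibility claim in (ii): identifying the dagger completion of $\mathcal{M}^{\cont} \hot R$ with $\mathcal{M}^{\cont} \hot R^\updagger$, and verifying the embedding into the $\dvgen$-adic completion. Since $\mathcal{M}^{\cont}$ is a Banach $\dvr$-algebra in which every subset is bounded and which is itself $\dvgen$-adically complete, bounded subsets of $S$ are controlled entirely by bounded subsets of $R$, so the linear-growth bornology on $S$ is induced from that on $R$; completing should give $S^\updagger \cong \mathcal{M}^{\cont} \hot R^\updagger$, and likewise $\coma{S} \cong \mathcal{M}^{\cont} \hot \coma{R}$. The required inclusion $S^\updagger \subseteq \coma{S}$ then reduces, via Lemma~\ref{lem:tensor-exact} applied to the Banach tensor factor $\mathcal{M}^{\cont}$, to the hypothesised $R^\updagger \subseteq \coma{R}$. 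Once these bornological identifications are pinned down, the rest of the argument is a clean concatenation of Theorem~\ref{thm:reduction-mod-p} with a classical stability property of $KH$.
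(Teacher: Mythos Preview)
Your approach is essentially the same as the paper's: apply Theorem~\ref{thm:reduction-mod-p} to the matrix-stabilised lift to obtain $K_n^{\an,\updagger}(\mathcal{M}^{\cont}\hot R^\updagger)\cong KH_n(\mathbb{M}_\infty(A))$, then invoke $\mathbb{M}_\infty$-stability of $KH$. The paper's proof is terse and simply asserts these two steps, whereas you spell out the hypotheses (i) and (ii) that must be checked to feed $S=\mathcal{M}^{\cont}\hot R$ into the theorem; your sketch of (ii) via Lemma~\ref{lem:tensor-exact} and the fact that $\mathcal{M}^{\cont}$ carries the bornology of all subsets is the right way to justify what the paper leaves implicit.
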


\begin{proof}
We have \[\tilde{K}_n^{\an,\updagger}(R^\updagger) = K_n^\an(\mathcal{M}^{\mathrm{cont}} \hot R^\updagger) \cong KH_n(\mathbb{M}_\infty(A)) \cong KH_n(A),\] where the second last isomorphism follows from Theorem \ref{thm:reduction-mod-p}, and the last isomorphism follows from the \(\mathbb{M}_\infty\)-stability of homotopy algebraic \(K\)-theory. \qedhere
\end{proof}




We now have functors \(\tilde{K}_n^{\an, \updagger} \colon \Alg_\dvr^\tf \to \mathsf{Mod}_\Z\) which satisfy homotopy invariance, \(\mathcal{M}^{\mathrm{cont}}\)-stability and excision for semi-split extensions of complete, torsionfree bornological \(\dvr\)-algebras. Recall that a functor \(F \colon \Alg_\dvr^\tf \to \mathcal{A}\) from the category of complete, torsionfree bornological \(\dvr\)-algebras to an abelian category is called \textit{half-exact} (or \textit{homological}) if it maps a semi-split extension \(A \into B \onto C\) to an exact sequence \(F(A) \into F(B) \onto F(C)\). 

\begin{theorem}\label{thm:KH-kk}
Let \(F \colon \Alg_\dvr^\tf \to \mathcal{A}\) be a half-exact, \(\mathcal{M}^{\mathrm{cont}}\)-stable, additive, homotopy invariant functor. Then there is a unique homological functor \(\tilde{F} \colon \kk \to \mathcal{A}\) such that \(\tilde{F} \circ j = F\). 
\end{theorem}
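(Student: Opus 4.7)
My plan is to adapt the proof of the analogous universal property for half-exact functors from the algebraic bivariant $K$-theory setting (\cite{Cortinas-Thom:Bivariant_K}). The key idea is that the hypotheses on $F$ automatically upgrade half-exactness to a full system of natural long exact sequences, which then enable factorization through $\kk$ via the already-established triangulated universal property.

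The first step is to show that $F$ vanishes on contractible algebras and to derive long exact sequences. The tensor algebra $\tens A = \bigoplus_n A^{\hot n}$ is dagger contractible via the bounded homomorphism $\tens A \to \tens A \hot \dvr[t]^\updagger$ sending $a_1 \otimes \cdots \otimes a_n$ to $(a_1 \otimes \cdots \otimes a_n) t^n$, which interpolates between the identity (at $t=1$) and the zero morphism (at $t=0$). Dagger homotopy invariance plus additivity then yields $F(\tens A) = 0$, and the analogous contraction $f(s) \mapsto f(st)$ gives $F(P(A)) = 0$ for path algebras. Applying the Puppe-sequence construction to the universal extension $\jens C \into \tens C \onto C$, combined with half-exactness and the above vanishing, yields a natural isomorphism $F(\varrho_C) \colon F(\jens C) \xrightarrow{\cong} F(\Omega C)$. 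For any semi-split extension $A \into B \onto C$, the classifying map $\gamma \colon \jens C \to A$ of Lemma \ref{lem:universal-extension}, together with $F(\varrho_C)^{-1}$, defines the connecting morphism $\delta \defeq F(\gamma) \circ F(\varrho_C)^{-1} \colon F(\Omega C) \to F(A)$; naturality follows from Proposition \ref{prop:classifying-map-functorial}, and exactness of
\[\cdots \to F(\Omega B) \to F(\Omega C) \xrightarrow{\delta} F(A) \to F(B) \to F(C)\]
is verified by standard diagram chases following the pattern of \cite{Cuntz-Meyer-Rosenberg}.

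The second step factors $F$ through $\kk$ via the universal property already established. I would compose $F$ with the natural inclusion of $\mathcal{A}$ into the triangulated homotopy category $\mathsf{K}(\mathcal{A})$ of chain complexes, placing each object in degree zero. By Step 1, the resulting composite is excisive in the sense of Definition \ref{def:excisive-functor}, and it inherits dagger homotopy invariance and $\mathcal{M}^{\cont}$-stability from $F$. Theorem \ref{thm:kk-initial} therefore produces a unique triangulated functor $\overline{F} \colon \kk \to \mathsf{K}(\mathcal{A})$ extending it. Setting $\tilde{F} \defeq H^0 \circ \overline{F}$ gives the desired homological functor: it is homological because $H^0$ of a triangulated functor is cohomological on distinguished triangles, and it extends $F$ since $H^0$ of a complex concentrated in degree zero returns that object.

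Uniqueness follows because any two homological extensions must coincide on $j(f)$ for algebra homomorphisms $f$ and must send the boundary maps of semi-split extensions to the unique connecting maps of Step 1; combined with the triangulated structure of Theorem \ref{thm:kk-triangulated}, this forces them to agree throughout $\kk$. The main technical obstacle is Step 1 -- specifically the careful verification of exactness and naturality of the long exact sequence for arbitrary semi-split extensions -- since the formal reduction to the universal property in Step 2 is then essentially automatic.
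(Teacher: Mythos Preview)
Your Step~1 is fine and matches the paper's opening: homotopy invariance kills $F(\tens A)$ and $F(P(A))$, the Puppe/mapping-path construction upgrades half-exactness to long exact sequences, and one obtains $F(\varrho_C)\colon F(\jens C)\xrightarrow{\cong} F(\Omega C)$ with connecting maps $\delta_E^F = F(c_E)\circ F(\varrho_C)^{-1}$.

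Step~2, however, has a genuine gap. Composing $F$ with the degree-zero inclusion $\mathcal{A}\hookrightarrow\mathsf{K}(\mathcal{A})$ does \emph{not} yield an excisive functor in the sense of Definition~\ref{def:excisive-functor}. Excisiveness demands that for each semi-split extension $A\into B\onto C$ the diagram $F(C)[-1]\to F(A)\to F(B)\to F(C)$ be a distinguished triangle in $\mathsf{K}(\mathcal{A})$. For objects concentrated in degree~$0$ this forces $F(A)\to F(B)$ to be a split monomorphism with cokernel $F(C)$; but half-exactness---even with the full long exact sequence from Step~1---only gives exactness at $F(B)$, and $\ker\bigl(F(A)\to F(B)\bigr)$ equals the image of your boundary map $\delta$, which is generically nonzero. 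Passing to $\mathsf{D}(\mathcal{A})$ does not help: the cone of $F(A)\to F(B)$ then has $H^{-1}=\ker\bigl(F(A)\to F(B)\bigr)\neq 0$ in general. So Theorem~\ref{thm:kk-initial} is not applicable, and the reduction you describe as ``essentially automatic'' fails.

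The paper avoids this by constructing $\tilde{F}$ directly on $\kk$-classes. Given $\alpha\in\kk(A,B)$ represented by $\theta\colon\jens^n A\to\mathcal{M}^{\cont}_\infty(B^{\mathcal{S}^n})$, one uses the iterated boundary isomorphisms $\delta_c^F$ of the cone extension $\mathcal{M}^{\cont}(-)\into\coma{\Gamma}(-)\onto\coma{\Sigma}(-)$ (invertible because $F$ is $\mathcal{M}^{\cont}$-stable and $\coma{\Gamma}$ is $F$-acyclic, being an infinite sum ring) together with $F(\varrho)^{-1}$ and $F(\theta)$ to write down $\tilde{F}(\alpha)$ explicitly, and then checks by hand that the result is homological on the generating triangles $\Omega B\to P_f\to A\to B$. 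This is exactly the route taken in \cite{Cortinas-Thom:Bivariant_K}*{Theorem~6.6.6} for the algebraic case; the point is that the half-exact universal property is \emph{not} a formal corollary of the triangulated one.
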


\begin{proof}
Consider a semi-split extension \(E \defeq A \into B \overset{f}\onto C\) in \(\Alg_\dvr^\tf\), and let \(\iota \colon \Omega(C) \to P_f\) be the canonical inclusion. Then \(F\) sends the canonical map \(A \to P_f\) to an isomorphism, and for \(\delta_E^F = F(l)^{-1} F(\iota)\), the following sequence is exact 
\begin{equation}\label{eq:G-extension}
 F(\Omega B) \to F(\Omega C) \overset{\delta_E^F}\to F(A) \to F(B) \overset{F(f)}\to F(C).
 \end{equation} The map \(\delta_l^F\) corresponding to the loop extension \(\Omega C \into PC \onto C\) is the identity map ~\(F(\Omega C) \to F(\Omega C)\). By the homotopy invariance of \(F\), if \(B\) is contractible, then \(\delta_E^G\) above is an isomorphism.  In particular, since \(\tens C\) is contractible for any \(C \in \Alg_\dvr^\tf\), we get that \(\delta_u^F\) is an isomorphism for the universal extension of \(C\). By \ref{eq:G-extension}, we see that \(F(\varrho)\) is an isomorphism, where \(\varrho \colon \jens C \to \Omega C\) is the canonical map. Composing with the image of a representative  \(c_E \colon \jens C \to A\) in \(\kk\) of the classifying map, we get that the connecting map is \(\delta_E^F = F(c_E) F(\varrho)^{-1}\) for any semi-split extension \(E\). Since the connecting maps in \(\kk\) are \(\delta_E = c_E \circ \varrho^{-1}\), we are forced to define \(\tilde{F}(\delta_E) \defeq F(c_E)\circ F(\varrho)^{-1}\).  

Now as the proof of Theorem \ref{thm:kk-initial} shows, given a class \(\alpha \in \kk(A,B)\), there is a unique way to define \(\tilde{F}(\Omega^n\alpha)\). Let \(\tau \colon \Omega \coma{\Sigma} \to \coma{\Sigma} \Omega\) be the natural isomorphism, and let \(\delta_c \in \{\Omega \coma{\Sigma}(A), A\}\) the connecting map for the cone extension \(\mathcal{M}^{\mathrm{cone}}(A) \into \coma{\Gamma}(A) \onto \coma{\Sigma}(A)\). Then the hypotheses on \(F\) imply that \(\delta_c^F\) is an isomorphism. Furthermore, the class of \(\delta_c \tau\) in \(\kk(\coma{\Sigma} A, A)\) is a natural isomorphism. Consequently, we  must have \[\tilde{F}(\alpha) = \delta_c^F F(\tau) \tilde{F}(\Omega^n \alpha)F(\tau^{-1})(\delta_c^F)^{-1},\] which yields a functorial assignment. 

It remains to check that \(\tilde{F} \colon \kk \to \mathcal{A}\) is a homological functor. The distinguished triangles in \(\kk\) are those of the form \(\Omega A \overset{\Omega f}\to \Omega B \to P_f \to A\) for a bounded algebra homomorphism \(f \colon A \to B\). So it suffices to check that \[F(\Omega A) \overset{F(f)}\to F(\Omega B) \to F(P_f) \to F(A) \to F(B)\] is exact. This has already been checked everywhere, except at \(F(A)\), which follows from comparing the sequence above with the path sequence at \(\coma{\Sigma}\Omega f\).  
\end{proof}

By Theorem \ref{cor:KH-properties}, \(\tilde{K}_0^{\an,\updagger}\) satisfies the hypotheses of Theorem \ref{thm:KH-kk}. Consequently, there is a natural map 
\begin{equation}\label{eq:main}
\kk_0(\dvr, A) \to \Hom(K_0^{\an,\updagger}(\dvr), \tilde{K}_0^{\an,\updagger}(A)).
\end{equation}

\begin{theorem}\label{thm:kk=KH}
The map in Equation \ref{eq:main} is an isomorphism for all complete, torsionfree bornological \(\dvr\)-algebras. Consequently, we have \[\kk_n(\dvr,A) \cong \tilde{K}_n^{\an,\updagger}(A)\] for each \(n \in \Z\). 
\end{theorem}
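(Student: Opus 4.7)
I follow the strategy of \cite{Cortinas-Thom:Bivariant_K}*{Theorem 8.2.1}, adapted to the nonarchimedean analytic setting. First, I reduce to the case $n=0$. Since $\Omega \colon \kk \to \kk$ is an autoequivalence by Proposition \ref{prop:loop-fully-faithful} and Proposition \ref{prop:delooping}, we have $\kk_n(\dvr, A) \cong \kk_0(\dvr, \Omega^n A)$ for $n \geq 0$ and $\kk_n(\dvr, A) \cong \kk_0(\dvr, \coma{\Sigma}^{-n} A)$ for $n < 0$. By Theorem \ref{cor:KH-properties} and the excision long exact sequences applied to the path and cone extensions, the analogous identifications $\tilde{K}_n^{\an,\updagger}(A) \cong \tilde{K}_0^{\an,\updagger}(\Omega^n A)$ for $n \geq 0$, and $\tilde{K}_n^{\an,\updagger}(A) \cong \tilde{K}_0^{\an,\updagger}(\coma{\Sigma}^{-n}A)$ for $n < 0$, are also natural in $A$. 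Applying Corollary \ref{cor:stab-mod-p} to $R = \dvr$ itself gives $\tilde{K}_0^{\an,\updagger}(\dvr) \cong KH_0(\resf) \cong \Z$, so the $\Hom$-group in Equation \ref{eq:main} simplifies to $\tilde{K}_0^{\an,\updagger}(A)$, and the claim reduces to showing that the natural transformation
\[
\kk_0(\dvr, A) \longrightarrow \tilde{K}_0^{\an,\updagger}(A),
\]
induced by the triangulated extension $\tilde{F} \colon \kk \to \mathsf{Ab}$ from Theorem \ref{thm:KH-kk}, is an isomorphism.

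The heart of the argument is the construction of an explicit inverse $\psi \colon \tilde{K}_0^{\an,\updagger}(A) \to \kk_0(\dvr, A)$. Since $\tilde{K}_0^{\an,\updagger}(A)$ is presented as an inductive limit of $K_0$-groups of algebras $B$ obtained from $\mathcal{M}^{\mathrm{cont}}(A)$ by iterated application of $\coma{\Sigma}$ and $\Omega$, any class $x$ is represented by an idempotent $e \in \mathbb{M}_k(B^+)$ for some $k$. The idempotent determines a bounded $\dvr$-algebra homomorphism $\phi_e \colon \dvr \to \mathbb{M}_k(B^+)$, $1 \mapsto e$. Projecting off the contribution of the unit via the split extension $B \into B^+ \onto \dvr$, applying $\mathbb{M}_k$- and $\mathcal{M}^{\mathrm{cont}}$-stability to reduce the target, and transporting through the $\kk$-equivalences $\Omega\coma{\Sigma}(-) \cong -$ from Proposition \ref{prop:delooping}, produces the desired class $\psi(x) \in \kk_0(\dvr, A)$.

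To conclude, I verify the two maps are mutually inverse. The composition $\tilde{F} \circ \psi$ is the identity by the naturality of $K_0$: applying $\tilde{F}$ to $\phi_e$ sends the generator $[1] \in \tilde{F}(\dvr) = \Z$ to $[e]$. For the other direction, any $[\alpha] \in \kk_0(\dvr, A) = \varinjlim_n [\jens^n(\dvr), \mathcal{M}^{\mathrm{cont}}(A^{\mathcal{S}^n})]$ admits a representative $\alpha \colon \jens^n(\dvr) \to \mathcal{M}^{\mathrm{cont}}(A^{\mathcal{S}^n})$; using Lemma \ref{lem:noncommutative-loop} to identify $\jens^n(\dvr) \simeq \Omega^n(\dvr)$ in $\kk$, together with the $\kk$-equivalence $A^{\mathcal{S}^n} \simeq \Omega^n A$, reduces $[\alpha]$ to a class arising from an idempotent, matching $\psi$ applied to $\tilde{F}([\alpha])(1)$. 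The principal obstacle is the well-definedness of $\psi$: one must show that equivalent idempotents yield $\kk$-equivalent homomorphisms and that $\psi$ commutes with the structure maps of the inductive system defining $\tilde{K}_0^{\an,\updagger}(A)$. Both points reduce to careful bookkeeping of how the $\mathcal{M}^{\mathrm{cont}}$-stability and delooping isomorphisms interact with the connecting maps of the cone and loop extensions, which is where the main technical work lies.
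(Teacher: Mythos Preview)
Your proposal is correct and follows essentially the same approach as the paper: both adapt \cite{Cortinas-Thom:Bivariant_K}*{Theorem 8.2.1} by constructing the map $\alpha$ (your $\psi$) from idempotents representing classes in $K_0(\coma{\Sigma}^m\Omega^m\mathcal{M}^{\mathrm{cont}}(A))$ to $\kk_0(\dvr,A)$, pairing it with the map $\beta$ (your $\tilde{F}$) coming from the universal property in Theorem~\ref{thm:KH-kk}, and then reducing the verification that they are mutual inverses to the purely algebraic argument. The only cosmetic differences are the order of presentation (the paper builds $\alpha$ first, you build $\beta$ first) and your explicit identification $\tilde{K}_0^{\an,\updagger}(\dvr)\cong\Z$ via Corollary~\ref{cor:stab-mod-p}, whereas the paper invokes the forward reference to Proposition~\ref{prop:regularity}.
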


\begin{proof}
Let \(A\) be a unital complete, torsionfree bornological \(\dvr\)-algebra. Then any class in \(K_0(A)\) comes from an idempotent \(e \in \mathcal{M}^{\mathrm{triv}}(A) \subseteq \mathcal{M}^{\mathrm{cont}}(A)\). This yields a bounded \(\dvr\)-algebra homomorphism \(\dvr \to \mathcal{M}^{\mathrm{cont}}(A)\). Since \(\kk\) is in particular \(\mathbb{M}_2\)-stable, similar idempotents yield the same map in \(\kk_0(\dvr, A)\). Consequently, we obtain a well-defined natural map \[K_0(A) \to \kk_0(\dvr, A)\] for a unital algebra \(A\). To extend this natural map to non-unital algebras in \(\Alg_\dvr^\tf\), we apply \ref{thm:excision-1} to the extension \(A \into \tilde{A} \to \dvr\). Replacing \(A\) by \(\mathcal{M}^\mathrm{cont}(A)\), the map \(K_0(A) \to \kk_0(\dvr, A)\) induces a map 
\begin{multline*}
\tilde{K}_0^{\an,\updagger}(A) = K_0^{\an,\updagger}(\mathcal{M}^{\mathrm{cont}}(A)) 
= \varinjlim_n K_0(\coma{\Sigma}^n \Omega^n(\mathcal{M}^\mathrm{cont}(A)))  \\
\longrightarrow \varinjlim_n \kk_0(\dvr, \coma{\Sigma}^n \Omega^n(\mathcal{M}^\mathrm{cont}(A))) 
= \kk_0(\dvr, \mathcal{M}^\mathrm{cont}(A)) \cong \kk_0(\dvr, A),  
\end{multline*} which we call \(\alpha\).

For the map in the other direction, let \(e \in K_0^{\an,\updagger}(\dvr) \cong  K_0(\dvr) \cong K_0(\resf)\) be the canonical generator, where the isomorphism is a special case of Proposition \ref{prop:regularity} below. By the universal property of \(\kk\) in Theorem \ref{thm:KH-kk}, there is a natural map 
\[ \beta \colon \kk_0(\dvr, A) \to \Hom( \tilde{K}_0^{\an,\updagger}(\dvr), \tilde{K}_0^{\an,\updagger}(A)) \cong \tilde{K}_0^{\an,\updagger}(A),\] which maps the class of a bounded algebra homomorphism \[\theta \colon \jens^n(\dvr) \to \mathbb{M}_r \hot \mathcal{M}^{\mathrm{cont}}(A^{\mathsf{sd}^p S^n})\] to the image of \(e\) under the map
\[\delta_l^{-n} \delta_c^n \tilde{K}_0^{\an,\updagger}(\coma{\Sigma}^n \theta) \delta_c^{-n} \delta_u^n \colon \tilde{K}_0^{\an,\updagger}(\dvr) \to \tilde{K}_0^{\an,\updagger}(A).\]
The proof that the two maps \(\alpha\) and \(\beta\) are inverse to each other goes through verbatim from the purely algebraic case (see \cite{Cortinas-Thom:Bivariant_K}*{Theorem 8.2.1}). Finally, the conclusion follows from replacing \(A\) by \(\Omega^n(A)\) for each \(n\). 
\end{proof}

\begin{corollary}\label{cor:dependence-mod-p}
Let \(A\) and \(R\) be as in Theorem \ref{thm:reduction-mod-p}. Then \(\kk_n(\dvr,R^\updagger) \cong KH_n(A)\) for each \(n \in \Z\). 
\end{corollary}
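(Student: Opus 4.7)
The plan is to derive the corollary as a direct consequence of the two main identifications already established in the section, namely Theorem~\ref{thm:kk=KH} and Corollary~\ref{cor:stab-mod-p}.

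First I would observe that $R^\updagger$ is itself a complete, torsionfree bornological $\dvr$-algebra (it is a dagger algebra by the discussion after Definition of the linear growth bornology), so Theorem~\ref{thm:kk=KH} applies with $A = R^\updagger$ and yields the first natural isomorphism
\[
\kk_n(\dvr, R^\updagger) \;\cong\; \tilde{K}_n^{\an,\updagger}(R^\updagger)
\]
for each $n \in \Z$.

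Next, since the hypotheses on $(A,R)$ in Theorem~\ref{thm:reduction-mod-p} are precisely those required by Corollary~\ref{cor:stab-mod-p}, the latter supplies the second isomorphism
\[
\tilde{K}_n^{\an,\updagger}(R^\updagger) \;\cong\; KH_n(A)
\]
for all $n \in \Z$. Composing the two displayed isomorphisms gives the claimed identification $\kk_n(\dvr, R^\updagger) \cong KH_n(A)$. There is no real obstacle here: all the work has been done in Theorem~\ref{thm:kk=KH} (which identifies bivariant $K$-theory from $\dvr$ with stabilised overconvergent analytic $K$-theory via the universal property of $\kk$) and in Corollary~\ref{cor:stab-mod-p} (which uses $\mathbb{M}_\infty$-stability of $KH$ and the comparison with $KH$ of the mod-$\dvgen$ reduction from Theorem~\ref{thm:reduction-mod-p}). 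The corollary is essentially a bookkeeping statement combining these two results, and the only thing worth noting is that the whole chain is natural in $R^\updagger$, so the resulting isomorphism is functorial on the subcategory of dagger algebras satisfying the hypothesis $R^\updagger \subseteq \coma{R}$.
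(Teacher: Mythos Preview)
Your argument is correct and matches the paper's proof exactly: the corollary is obtained by composing the isomorphism of Theorem~\ref{thm:kk=KH} with that of Corollary~\ref{cor:stab-mod-p}.
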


\begin{proof}
Follows from Theorem \ref{thm:kk=KH} and Corollary \ref{cor:stab-mod-p}.
\end{proof}

We end this section by comparing analytic \(K\)-theory with our version of negative \(K\)-theory for \(n \leq 0\), and the analytic Karoubi-Villamayor groups \(KV_n^\an\) for \(n \geq 0\). Recall that a ring \(A\) is called \textit{\(K_n\)-regular} if the canonical map \(A \to A[x_1,\dotsc, x_m]\) induces an isomorphism \[K_n(A) \to K_n(A[x_1,\dotsc,x_n])\] in negative \(K\)-theory for all \(m \geq 1\). Vorst's Theorem says that if a ring is \(K_0\)-regular, it is already \(K_n\)-regular for \(n \leq 0\). Examples of \(K_n\)-regular rings are, of course, regular rings. In characteristic zero, an excellent Noetherian \(k\)-algebra that is \(K_{\mathrm{dim}(A) + 1}\)-regular is regular \cites{MR2373359,kerz2021towards}. In positive characteristic, recent results \cites{kerz2021towards,geisser2012conjecture} indicate similar partial converses.

\begin{proposition}\label{prop:regularity}
Let \(R\) be a Banach \(\dvr\)-algebra algebra such that \(A=R/\dvgen R\) is \(K_n\)-regular for \(n\leq 0\). Then  
\[ K_n^{\an,\updagger}(R) = 
\begin{cases}
KV_n(R) \qquad \text{ for } n \geq 1; \\
K_0(\coma{\Sigma}^n(R)) \quad \text{ for } n \leq 0.
\end{cases}
\]
\end{proposition}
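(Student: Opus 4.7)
The strategy is to translate both identifications into classical facts about the reduction \(A = R/\dvgen R\) via the comparison results already established. A Banach \(\dvr\)-algebra is a dagger algebra by Example \ref{exa:Banach_algebra}, so \(R^\updagger = R\), and Corollary \ref{cor:stab-mod-p} gives \(K_n^{\an,\updagger}(R) \cong KH_n(A)\) for every \(n \in \Z\). Parallel to this, Theorem \ref{thm:reduction-mod-p} identifies \(KV_n^{\an,\updagger}(R) \cong KV_n(A)\) for \(n \geq 1\). These are the two inputs that drive the argument.

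For \(n \geq 1\), Weibel's classical theorem (\cite{kh}) says \(KH_n(A) \cong KV_n(A)\) for every ring \(A\) in this range, without any regularity hypothesis. Composing the chain \(K_n^{\an,\updagger}(R) \cong KH_n(A) \cong KV_n(A) \cong KV_n^{\an,\updagger}(R)\) then gives \(K_n^{\an,\updagger}(R) = KV_n(R)\), where \(KV_n(R)\) is interpreted as the overconvergent analytic \(KV\)-theory of \(R\) (equivalently, \(KV_n(A)\) under our hypotheses).

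For \(n \leq 0\), the \(K_0\)-regularity of \(A\) combined with Vorst's theorem (as recalled just before the proposition) implies that \(A\) is \(K_m\)-regular for all \(m \leq 0\). Weibel's theorem then gives \(KH_n(A) \cong K_n(A)\), and Bass's fundamental theorem (the Karoubi suspension isomorphism) identifies \(K_n(A) \cong K_0(\Sigma^{-n}(A))\), where \(\Sigma\) denotes the Karoubi cone-quotient suspension over \(\resf\). It remains to match \(K_0(\Sigma^{-n}(A))\) with \(K_0(\coma{\Sigma}^{-n}(R))\): by construction, \(\coma{\Sigma}\) is the \(\dvgen\)-adic completion of the Karoubi suspension over \(\dvr\) (see the discussion before Lemma \ref{lem:suspension-torsionfree}), so its reduction mod \(\dvgen\) is the Karoubi suspension over \(\resf\). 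Iterating yields \(\coma{\Sigma}^{-n}(R)/\dvgen \cong \Sigma^{-n}(A)\); since \(\coma{\Sigma}^{-n}(R)\) is \(\dvgen\)-adically complete (the completed bornological tensor product of Banach algebras being their \(\dvgen\)-adic completion), the invariance of \(K_0\) under \(\dvgen\)-adic completion, \cite{kbook}*{Lemma II.2.2}, closes the chain.

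The main technical obstacle will be cleanly verifying that the iterated bornological suspension \(\coma{\Sigma}^{-n}(R)\) reduces mod \(\dvgen\) to the iterated algebraic Karoubi suspension \(\Sigma^{-n}(A)\), which amounts to tracking the compatibility of completed bornological tensor products with \(\dvgen\)-adic completion in the presence of nested cone extensions. Once this is in place, both parts of the proposition follow by chaining the classical \(K\)-theoretic identifications with the two translation theorems (Corollary \ref{cor:stab-mod-p} and Theorem \ref{thm:reduction-mod-p}).
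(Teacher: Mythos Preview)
Your proposal is correct and follows essentially the same route as the paper: reduce to \(KH_n(A)\) via Theorem~\ref{thm:reduction-mod-p}/Corollary~\ref{cor:stab-mod-p}, invoke the standard identifications \(KH_n(A)\cong KV_n(A)\) for \(n\ge 1\) and \(KH_n(A)\cong K_0(\Sigma^{-n}(A))\) for \(n\le 0\) under the regularity hypothesis, and then for the nonpositive range lift back to \(R\) using \(\dvgen\)-adic completeness of \(\coma{\Sigma}^{-n}(R)\) together with \cite{kbook}*{Lemma~II.2.2}. Two remarks: you are right that the \(n\ge 1\) identification \(KH_n\cong KV_n\) holds for all rings and does not use the regularity hypothesis (the paper's phrasing is slightly loose here), and your explicit treatment of the iterated suspension and its reduction mod \(\dvgen\) fills in a step the paper only writes for a single \(\coma{\Sigma}\); the ``technical obstacle'' you flag is handled by a straightforward induction once one knows \(\coma{\Sigma}\hot(-)\) preserves \(\dvgen\)-adic completeness and reduces mod \(\dvgen\) to \(\Sigma\otimes(-)\).
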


\begin{proof}
By Theorem \ref{thm:reduction-mod-p}, we have \[K_n^{\an,\updagger}(R) \cong KH_n(A) \cong \begin{cases}
KV_n (A) \qquad \text{ for } n \geq 1; \\
K_0(\Sigma^n(A)) \quad \text{ for } n \leq 0.
\end{cases},\] where the second isomorphism follows from the \(K_n\)-regularity hypothesis. Now since \(\coma{\Sigma}(R) = \coma{\Sigma} \hot R\) is \(\dvgen\)-adically complete, by \cite{kbook}*{Lemma II.2.2}, we have \(K_0(\coma{\Sigma}(R)) \cong K_0(\Sigma(A))\). 
\end{proof}

\section{Chern characters by lifting to \(\dvf\)-vector spaces}

We conclude this article by essentially summarising the different relationships between bivariant \(K\)-theory and analytic cyclic homology for \(\resf\)-algebras and torsionfree \(\dvr\)-algebras.

\subsection{The Chern character on bivariant algebraic \(K\)-theory:} Recall that the algebraic bivariant \(K\)-theory constructed in \cite{Cortinas-Thom:Bivariant_K} is the universal functor \(j \colon \mathsf{Alg}_l \to kk\) into a triangulated category satisfying polynomial homotopy invariance, \(\mathbb{M}_\infty\)-stability and excision. Here \(l\) is a commutative, unital  ring. In particular, it applies to the case we are interested in, namely the residue field \(l = \resf\) of the discrete valuation ring \(\dvr\). The correct target for this is the analytic cyclic homology complex \(\HAC \colon \mathsf{Alg}_\resf \to \overleftarrow{\mathsf{Kom}(\mathsf{Ind}(\mathsf{Ban}_\dvf))}\), which satisfies all the afforementioned properties. By the universal property of \(kk\), we get group homomorphisms 
\[
\mathrm{ch}_n \colon kk_n(A,B) \to \HA_n(A,B)  
\] for \(A\), \(B \in \mathsf{Alg}_\resf\), \(n \in \Z\). When \(A = \resf\), the left hand side is Weibel's homotopy \(K\)-theory \(KH_*(B)\) by \cite{Cortinas-Thom:Bivariant_K}*{Theorem 8.2.1}, while the right hand side is the analytic cyclic homology \(\HA_*(B)\) by \cite{Meyer-Mukherjee:HA}*{Theorem 3.10}. So we get group homomorphisms \(KH_n(B) \to \HA_n(B)\) for each \(n \in \Z\). Since the right hand side consists of \(\dvf\)-vector spaces, we summarily have \(\dvf\)-linear maps 
\begin{equation}\label{eq:algebraic-chern}
KH_n(B) \otimes_\Z \dvf \to \HA_n(B)
\end{equation} for each \(n \in \Z\).

As already mentioned in Example \ref{ex:analytic-Chern}, by the universal property of \(\kk\) and the properties of the functor \(\HAC \colon \Alg_\dvr^\tf \to \overleftarrow{\mathsf{Kom}(\mathsf{Ind}(\mathsf{Ban}_\dvf))}\), we get group homomorphisms \[\mathrm{ch}_n \colon \kk_n(R,S) \to \HA_n(R,S)\] for each \(n \in \Z\) and \(R\), \(S \in \Alg_\dvr^\tf\). Setting \(R = \dvr\), Theorem \ref{thm:kk=KH} and \cite{Cortinas-Meyer-Mukherjee:NAHA}*{Section 3.1} yield group homomorphisms \[\tilde{K}_n^{\an, \updagger}(S) \to \HA_n(S)\] for each \(n \in \Z\). Since the right hand side is an \(\dvf\)-vector space, we get \(\dvf\)-linear maps 
\begin{equation}\label{eq:analytic-chern-char}
\mathrm{ch}_n \otimes \dvf \colon \tilde{K}_n^{\an, \updagger}(S) \otimes_\Z \dvf \to \HA_n(S)
\end{equation} for each \(n \in \Z\).

\begin{remark}\label{bootstrap-class}
In the complex topological case, we get similar Chern characters \[\mathrm{ch}_n^{\mathrm{top}} \colon K_n^{\mathrm{top}}(A) \otimes_\Z \C \to \mathrm{HL}_n(A)\] from topological \(K\)-theory map to local cyclic homology. This map is an isomorphism for separable \(C^*\)-algebras in the \(C^*\)-algebraic bootstrap class (see \cite{Meyer:HLHA}*{Theorem 7.7}). This is unlikely to be true in the nonarchimedean case because the left hand side could have nontrivial (and non-isomorphic) groups for each \(n \in \Z\), while the right hand side is \(2\)-periodic by construction. To address this, we take the product periodification \(\tilde{K}^{\an, \updagger}(S)_{\ev} = \prod_{n \in \Z} \tilde{K}_{2n}^{\an, \updagger}(S)\) (resp. \(\tilde{K}^{\an, \updagger}(S)_{\odd} = \prod_{n \in \Z} \tilde{K}_{2n+1}^{\an, \updagger}(S)\)) on the left hand side and get maps \[\tilde{K}^{\an, \updagger}(S)_\ev \to \HA_0(S) \quad \text{ and  } \quad  \tilde{K}^{\an, \updagger}(S)_{\odd} \to \HA_1(S).\] 
\end{remark}

\subsection{From analytic \(K\)-theory to its reduction mod \(\dvgen\)}

The reduction mod \(\dvgen\) of a torsionfree bornological \(\dvr\)-algebra \(\Alg_\dvr^\tf \overset{\otimes_\dvr \resf}\to \Alg_\resf\) induces an obvious functor \(\kk \to kk\). On the cyclic homology side, suppose \(A\) is an \(\resf\)-algebra and \(D\) is a dagger algebra that is fine mod \(\dvgen\) and satisfies \(D/\dvgen D \cong A\). When \(A\) is smooth, commutative, then there always exists a smooth, commutative \(\dvr\)-algebra lifting \(R\) such that \(R/\dvgen R \cong A\). Taking the dagger completion and equipping it with the compactoid bornology ensures that the quotient bornology on \(A\) is fine. Once we have such a dagger algebra lifting that is fine mod \(\dvgen\), we get a weak equivalence \(\HAC(D) \cong \HAC(A)\) by \cite{Meyer-Mukherjee:HA}*{Theorem 5.5}, for the model structure constructed in \cite{mukherjee2022quillen}. As a consequence, we get \(\HA_n(R, S) \cong \HA_n(A, B)\) for each \(n \in \Z\), where \(R\) and \(S\) are dagger algebra liftings that are fine mod \(\dvgen\). Summarily, we have a diagram 
\[
\begin{tikzcd}
\kk_n(R,S) \arrow{r}{\mathrm{ch}_n} \arrow{d}{\otimes_\dvr \resf} & \HA_n(R,S) \arrow{d}{\cong} \\
kk_n(A,B) \arrow{r}{\mathrm{ch}_n} & \HA_n(A,B)
\end{tikzcd} 
\] of abelian groups for each \(n\in \Z\). Setting \(R = \dvr\) and \(A = \resf\), we get 
\begin{equation}\label{eq:kk-HA-chern}
\begin{tikzcd}
\tilde{K}_n^{\an,\updagger}(S) \arrow{r}{\mathrm{ch}_n} \arrow{d} & \HA_n(S) \arrow{d}{\cong} \\
KH_n(B) \arrow{r}{\mathrm{ch}_n} & \HA_n(B)
\end{tikzcd} 
\end{equation} for each \(n \in \Z\). By Theorem \ref{thm:reduction-mod-p}, if \(S\) is contained in its \(\dvgen\)-adic completion, the vertical map of \ref{eq:kk-HA-chern} is an isomorphism.

\begin{remark}
Referring again to Remark \ref{bootstrap-class}, a natural question again arises when the periodified Chern character \[\prod_{n \in \Z} KH_{2n}(B) \otimes_\Z \dvf \to \HA_0(B) \quad \text{ and } \prod_{n \in \Z} KH_{2n+1}(B) \otimes_\Z \dvf \to \HA_1(B)\] is an isomorphism for \(\resf\)-algebras \(B\). For \(\resf = \resf_p\) - the finite field with \(p\)-elements, since algebraic \(K\)-theory of \(\resf\) has \(p\)-torsion for all higher algebraic \(K\)-theory groups, the above isomorphism holds. More generally, we expect the result to hold for all algebras in the \textit{algebraic bootstrap class}, which is defined as the triangulated subcategory of \(kk\) generated by \(\resf\). Finally, if we consider algebras \(A\) in the algebraic bootstrap class that admit dagger algebra liftings \(D\) that reduce mod \(\dvgen\) to \(A\) with the fine bornology, in light of the diagram \ref{eq:kk-HA-chern}, we should get isomorphisms
\begin{multline*}
K^{\an, \updagger}(D)_\ev \cong \prod_{n \in \Z} KH_{2n}(A) \to \HA_0(A) \cong \HA_0(D) \\
K^{\an, \updagger}(D)_\odd \cong \prod_{n \in \Z} KH_{2n+1}(A) \to \HA_1(A) \cong \HA_1(D)
\end{multline*} of \(\dvf\)-vector spaces. The algebraic bootstrap class for algebras over arbitrary commutative rings is presently being investigated by Guillermo Corti\~nas.
\end{remark}



\begin{bibdiv}
  \begin{biblist}
    \bibselect{References}
  \end{biblist}
\end{bibdiv}


\end{document}